\newtheorem{theorem}{Theorem}[section]
\newtheorem{lemma}[theorem]{Lemma}
\newtheorem{proposition}[theorem]{Proposition}
\newtheorem{corollary}[theorem]{Corollary}
\theoremstyle{remark}
\newtheorem{observation}[theorem]{Remark}
\newtheorem{definition}{Definition}[section]
\newcommand{\set}{\mathbb}
\newcommand{\R}{\mathbb R}
\newcommand{\U}{U}
\newcommand{\CC}{\Gamma}
\newcommand{\B}{\mc L}
\newcommand{\BV}{BV}
\newcommand{\dl}{\nabla}
\newcommand{\les}{\lesssim}
\newcommand{\mc}{\mathcal}
\newcommand{\be}{\begin{equation}}
\newcommand{\ee}{\end{equation}}
\newcommand{\bee}{\begin{align}}
\newcommand{\eee}{\end{align}}
\newcommand{\ba}{\begin{array}}
\newcommand{\ds}{\displaystyle}
\newcommand{\ea}{\end{array}}
\newcommand{\bpm}{\begin{pmatrix}}
\newcommand{\epm}{\end{pmatrix}}
\newcommand{\lb}{\label}
\DeclareMathOperator{\sgn}{sgn}
\DeclareMathOperator*{\slim}{s-lim}
\DeclareMathOperator{\Imim}{Im}
\newcommand{\dd}{{\,}{d}}
\renewcommand{\Im}{\Imim}
\newcommand{\Dil}{\mathrm{Dil}}
\title{The Schr\"{o}dinger Equation with a Potential in Rough Motion}
\author{Marius Beceanu}
\address{110 Frelinghuysen Rd., Rutgers Math.\ Dept., Piscataway, NJ 08854, USA}
\email{mbeceanu@rci.rutgers.edu}
\author{Avy Soffer}
\address{110 Frelinghuysen Rd., Rutgers Math.\ Dept., Piscataway, NJ 08854, USA}
\email{soffer@math.rutgers.edu}
\subjclass[2000]{35Q41, 35J10, 35P25, 35Q55, 35Q40, 81U05} 
\begin{document}
\maketitle
\numberwithin{equation}{section}
\begin{abstract}
This paper proves endpoint Strichartz estimates for the linear Schr\"{o}dinger equation in $\R^3$, with a time-dependent potential that keeps a constant profile and is subject to a rough motion, which need not be differentiable and may be large in norm. The potential is also subjected to a time-dependent rescaling, with a non-differentiable dilation parameter.\\
We use the Strichartz estimates to prove the non-dispersion of bound states, when the path is small in norm, as well as boundedness of energy.\\
We also include a sample nonlinear application of the linear results.
\end{abstract}

\section{Introduction}
\subsection{Overview}
Consider the linear Schr\"{o}dinger equation in $\set R^3$ with a
time dependent potential
\be\lb{1.1}
i\partial_t Z + H(t) Z = F,\ Z(0)= Z_0 \text{ given}, \ee where
\be\lb{eq_3.2} H(t) = H_0 + V(x,t) = -\Delta + G_{a(t)}V(x).
\ee
$G_{a}$ stands for an element of the Galilean group on $\set R^3$,
indexed by the~vector~$a$:
$$
a=(\gamma,\beta,v);\ G_a V(x) = e^{ivx} \Dil_{3/2}(\beta) V(x-\gamma).
$$
$\gamma:[0, \infty) \to \set R^3$ is a continuous curve
corresponding to translation, $\beta$ corresponds to rescaling, and $v$ to boost.
$\Dil_{3/2}$ stands for an element of a dilation group, see below.

Such Hamiltonians appear naturally in many applications, from
physical models, like charge transfer Hamiltonians and particle
models of Markovian potentials, to the mathematical analysis of
dispersive PDEs (e.g.\ stability of solitons). Time-dependent rotations can also be included (see \cite{bec}) and are relevant in the treatment of vortex-type dynamics. More general time-dependent symmetries can also be accounted for.

More substantial is the generalization to the multicenter case, where several such potentials with different time-dependent Galilean group actions are considered together. This case will be treated elsewhere.

We show that, for a general rough class of trajectories
$a(t)$, the basic dispersive and scattering estimates hold for
$H(t)$. The potential function $V(x)$ is assumed to be in a natural
$L^p$ space (i.e.\ $L^{3/2} \cap L^2$), with no smoothness or size assumptions.

The natural space that we find for the trajectories $a(t)$ is
\begin{definition}
$\CC = (\dot H^{1/2} \cap C) + \BV$.
\end{definition}

Here, $\BV$ stands for the space of functions of bounded variation.

A key property we use is that the space of distributional
derivatives of these trajectories is a Banach subspace of the dual of $\CC$: $\CC' \subset \CC^*$. The basic definitions of $\CC$, its dual, etc., and some of their fundamental properties are discussed in detail in the next section. It is shown in particular that $\CC$ is a Banach algebra under pointwise multiplication; see Lemma \ref{lemma24}.

In Section \ref{sect_3}, we estimate the integral operator arising
from the Duhamel formula representation of the solution of the
Schr\"odinger equation. The new ingredient is that the
kernel of the free propagator is conjugated by time dependent
Galilean group elements with rough time dependent parameters.

We use modulation equations to handle the bound states of the system, then estimate the solutions of the ensuing ODEs in a space of
distributions. Related results concerning ordinary differential equations in rough spaces have been obtained, for example, by Lyons \cite{lyons}, but we find that such methods do not apply directly to the current problem.

We prove the Strichartz estimates by a bootstrap argument involving the modulation equations, then control, uniformly in time, the $L^2$ and the $H^1$ norms of the solution in terms of the corresponding norms of the initial data.

Our approach allows, in particular, handling the modulation
equations in cases where there is no integrability in time (which is
replaced here by the $\dot H^{1/2} \cap C$ condition). It also applies to
Hamiltonians with self-similar potential, as they appear in the
study of blowup phenomena for NLS.

\subsection{Main result}
The linear Schr\"{o}dinger equation has symmetries corresponding to changes of position and velocity of its coordinate frame and to dilation. This allows us to accommodate three kinds of transformations of the potential $V$, i.e.\ translations, boost, and rescaling.

The rescaling of $V$ is dictated by the presence of $-\Delta$ in the equation, hence must be of the form $V \mapsto e^{\beta(x\dl+2)} V$. The rescaling of the solution $Z$, on the other hand, is dictated by the fact that we are considering $L^2$ solutions, so must be the $L^2$-unitary dilation $Z(x) \mapsto e^{\beta(x\dl+3/2)} Z(x)$.

Accordingly, let
$$
\gamma(t) = D(t) + 2\int_0^t v(s) \dd s
$$
and
\be\lb{isom}
S_{3/2}(t) = e^{\beta(t)(x\dl+2)} e^{\gamma(t)\dl},\ S(t) = e^{\beta(t)(x\dl+3/2)} e^{\gamma(t) \dl} e^{v(t) x}.
\ee
The main result of the paper is then the following:
\begin{proposition}[Strichartz estimates for time-dependent potentials]\lb{prop23} Consider the equation in $\R^3 \times [0, \infty)$
\be
i \partial_t Z + H(t) Z = F,\ Z(0) \text{ given},\ H(t) := -\Delta + S_{3/2}(t)^{-1} V,
\ee
with a real-valued, scalar potential
$$
S_{3/2}(t)^{-1} V(x) := e^{-\beta(t)(x\dl+2)} V(x-\gamma(t))
$$
of variable scale $\beta(t)$, driven by a curve
$$
\gamma(t) = D(t) + 2\int_0^t v(s) \dd s.
$$
Assume that $\ds\lim_{T \to \infty} \sup_{t \geq T} |\beta(t)-\beta(T)| << 1$, likewise for $v$ and $D$, and that $e^{\beta} \dot v, e^{-\beta} \dot D \in \dot H^{-1/2} \cap \partial_t^{-1} C$, $\beta \in \dot H^{1/2} \cap C$. Assume that $V \in L^{3/2} \cap L^2$ 
and that $-\Delta + V$ has no embedded eigenvalues or threshold resonances. 

Then there exist families of unitary operators $B(t) \in W^{1, \infty}_t$ and $A(t) \in \CC_t$ on $P_p L^2$ such that
\be\begin{aligned}
&\|P_c S(t) Z(t)\|_{L^{\infty}_t L^2_x \cap L^2_t L^{6, 2}_x} + \|B(t)^{-1} A(t) P_p S(t) Z(t)\|_{\CC} \les \\
&\les \|Z(0)\|_2 + \|P_c S(t) F(t)\|_{L^1_t L^2_x + L^2_t L^{6/5, 2}_x} + \|B(t)^{-1} A(t) P_p S(t) F(t)\|_{\CC'}.
\end{aligned}\ee
Moreover, for any $F \in \dot H^{1/2}_t \langle \dl \rangle^{-1} \langle x \rangle^{-1-\epsilon} L^2_x$ and any family of $L^2$-isometries $\tilde S(t)$ defined by (\ref{isom}) with $e^{\tilde \beta} \dot {\tilde v}, e^{-\tilde \beta} \dot {\tilde D} \in \dot H^{-1/2} \cap \partial_t^{-1} C$, $\tilde \beta \in \dot H^{1/2} \cap C$,
$$\begin{aligned}
\big\|\big\langle Z(t), \tilde S(t) F(t) \big\rangle\big\|_{\dot H^{1/2} \cap C} &\les \|Z(0)\|_2 + \|P_c S(t) F(t)\|_{L^1_t L^2_x + L^2_t L^{6/5, 2}_x} + \\
&+ \|B(t)^{-1} A(t) P_p S(t) F(t)\|_{\CC'}.
\end{aligned}$$
\end{proposition}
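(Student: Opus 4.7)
The plan is to conjugate the equation by the time-dependent $L^2$-isometry $S(t)$, so that the moving potential becomes fixed. Setting $W(t) = S(t) Z(t)$ and differentiating, a direct computation using the commutation relations between $-\Delta$ and the generators $x\dl + 3/2$, $\dl$, and $x$ of dilation, translation, and boost gives that $W$ satisfies
\be
i \partial_t W + e^{-2\beta(t)}(-\Delta + V) W = S(t) F + R(t) W,
\ee
where $R(t)$ is a first-order operator of the schematic form $\dot\beta(t) R_\beta + \dot v(t) R_v + \dot D(t) R_D$, with $R_\beta, R_v, R_D$ bounded linear combinations of $x \dl$, $x$, and $\dl$. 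The assumptions that $\beta \in \dot H^{1/2}\cap C$ and that $e^\beta \dot v$, $e^{-\beta}\dot D$ lie in $\dot H^{-1/2}\cap \partial_t^{-1} C$ are designed precisely so that, once multiplied against $\Gamma$-valued coefficients, the distributional derivatives appearing in $R(t)$ can be realized as elements of $\CC' \subset \CC^*$. A time reparametrization $\tau(t) = \int_0^t e^{-2\beta(s)}\dd s$ reduces the principal part to the standard Schr\"odinger operator $i\partial_\tau + H$ with $H = -\Delta + V$, modulo a rough perturbation of the same type.

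Next I would decompose $W = P_c W + P_p W$ with respect to $H$. On the continuous subspace, the standing assumptions on $V$ (i.e.\ $V \in L^{3/2}\cap L^2$, no embedded eigenvalues or threshold resonances) supply the endpoint Strichartz estimates for $e^{-i\tau H} P_c$ in the Lorentz-refined form $L^\infty_\tau L^2_x \cap L^2_\tau L^{6,2}_x$ with dual source space $L^1_\tau L^2_x + L^2_\tau L^{6/5,2}_x$. Applying these to the Duhamel representation of $P_c W$ yields the first two terms of the claimed estimate, provided we can bound the cross term $P_c(R(t) P_p W)$; this is where the algebra property of $\CC$ from Lemma \ref{lemma24} together with the pairing $\CC' \subset \CC^*$ is used to convert the rough derivatives $\dot\beta, \dot v, \dot D$, tested against the $\CC$-valued bound state coefficients, into legitimate dispersive source terms.

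For the discrete piece $P_p W$, expand in a basis of bound states to obtain finite-dimensional modulation ODEs. The smooth leading-order part of these ODEs is governed by $H\!\restriction_{P_p L^2}$ and integrates to a $W^{1,\infty}_t$ unitary propagator $B(t)$. The rough perturbation $R(t)$ contributes a distributional driving term in $\CC'$, and the operator $A(t) \in \CC_t$ is chosen so that $B(t)^{-1} A(t) P_p W$ satisfies an equation whose right-hand side is the $\CC'$ image of the source; the point is that integration by parts in the $\CC/\CC'$ duality transfers every distributional derivative onto the smooth factor $B^{-1}A$, using the Banach algebra property of $\CC$ so the product stays in $\CC$. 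The smallness assumption $\sup_{t\geq T}|\beta(t)-\beta(T)| \ll 1$, and its analogues for $v$ and $D$, is used to invert the associated small operator by a Neumann series.

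The two estimates are coupled through the off-diagonal parts $P_c R(t) P_p$ and $P_p R(t) P_c$, so I close everything by a standard bootstrap: assume the combined estimate on a time interval and substitute into the Duhamel formula and the modulation ODEs; the smallness just mentioned produces an improved constant. The main obstacle I expect is precisely this coupled step, in particular verifying that pairing a $\CC'$ distributional derivative with the $P_p$ bound-state coefficients outputs an object admissible as a $P_c$ Strichartz source, and vice versa. For the "moreover" statement, I would use duality: the pairing $\langle Z(t), \tilde S(t) F(t)\rangle$ equals $\langle S(t) Z(t), (S(t) \tilde S(t)^{-1})\, \tilde S(t) F(t)\rangle$, where $S(t)\tilde S(t)^{-1}$ is still an $L^2$-isometry of the same $\Gamma$-type; applying the already established estimate in $L^\infty_t L^2_x$ and tracking the regularity of the test family, together with the $\dot H^{1/2}\cap C$ control coming from $\beta$, $\tilde\beta$, $v$, $\tilde v$, $D$, $\tilde D$, gives the $\dot H^{1/2}\cap C$ bound on the pairing.
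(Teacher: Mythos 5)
Your high-level skeleton---conjugate by the isometry $S(t)$, split into continuous and discrete spectral parts, treat the rough coefficients through the $\CC/\CC'$ duality, and close by a fixed-point/bootstrap---matches the structure of the paper. However, there is a genuine gap in the treatment of the continuous-spectrum piece.

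You propose to Duhamel-expand $P_c W$ against the propagator $e^{-i\tau H}P_c$ (after reparametrizing time) and treat $R(\cdot)W$ as a source. The problem is the diagonal term $P_c R(t) P_c W$, which you do not address (you only name the cross term $P_c R(t) P_p W$). The operator $R(t)$ is a linear combination of $x\dl$, $x$, and $\dl$ with distributional coefficients $\dot\beta$, $\dot v$, $\dot D$; applied to $P_c W$, which you only control in $L^\infty_t L^2_x \cap L^2_t L^{6,2}_x$, these are unbounded and do not land in the dual Strichartz space $L^1_t L^2_x + L^2_t L^{6/5,2}_x$, nor can the $\CC'\subset\CC^*$ duality help here because there is no $\CC$-valued factor to absorb the distributional derivative---$P_c W$ carries no $\dot H^{1/2}_t$ regularity a priori. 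The bootstrap you describe therefore does not close on the dispersive component.

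The paper avoids this by never putting $K = e^{\beta}\dot v\cdot x - i e^{-\beta}\dot D\dl - i\dot\beta(x\dl + 3/2)$ on the right-hand side of the dispersive equation. Instead, the model operator $i\partial_t + K - |v|^2 + e^{-2\beta}H_0$ is solved exactly by $S(t)^{-1} e^{i(t-s)H_0} S(s)$ (because the Galilean group and dilations are symmetries of the \emph{free} Schr\"odinger equation), so the entire rough perturbation is absorbed into the explicit conjugated free propagator rather than appearing as a source. The potential $V$ is then reattached perturbatively using the Birman--Schwinger-type factorization $V - P_p(H-i\delta) = \tilde V_1\tilde V_2$ of Lemma \ref{lemma32}, and invertibility of $I - i\tilde T_{\tilde V_2,\tilde V_1}$ is obtained by comparing the Galilean-conjugated kernel to the unperturbed one via Lemma \ref{lemma2.1}. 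Your proposal omits both this factorization and this kernel comparison, which are the load-bearing technical ingredients. The time reparametrization $\tau = \int_0^t e^{-2\beta}$ is also absent from the paper and unnecessary once one works with the conjugated free flow; it is harmless but cosmetic. Your account of the bound-state system, the choice of $B(t)$ and $A(t)$, and the duality idea for the ``moreover'' estimate is in the right spirit and roughly consistent with the paper's use of Lemma \ref{lema35}.
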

The same conclusion holds if $v$, $D$, $\beta \in \BV$ have sufficiently small jumps --- or if $v$, $D$, $\beta \in \CC$ have sufficiently small jumps and if all bound states of $-\Delta+V$ have three derivatives --- the latter guaranteed when $V$ has one derivative.

The proof becomes simpler if there is at most one bound state, with a further simplification if there are no bound states: then $v$, $D$, and $\beta$ can just be taken to have finite $L^{\infty}$ norms, if we also assume that they have locally small variation, i.e. at every $T$ there exists $\epsilon$ such that
$$
\sup_{t \in [T-\epsilon, T+\epsilon]} |v(t_1) - v(t_2)| << 1 \text{ and } \lim_{T \to \infty} \sup_{t \geq T} |v(t) - v(T)| << 1,
$$
and it also suffices to take $V \in L^{3/2, \infty}_0$ (the weak-$L^{3/2}$ closure of the set of bounded, compactly-supported functions). In this case, the proof essentially reduces to Lemma \ref{lemma2.1}.

Otherwise, the extra half-derivative is needed to control the interactions between the bound and dispersive states. The extra assumption on $V$, combined with the absence of threshold resonances and eigenstates, guarantees enough regularity and decay of the bound states of $-\Delta+V$ (by Lemma \ref{lema36}) to apply Lemma \ref{lema35}.

With bound states, the conclusion still holds true when $\beta$, $v$, $D \in \CC$, if they have locally small variation, as above --- for extra regularity for~$V$.

The proof also appears to work in even greater generality --- for $\beta$, $v$, and $D$ that are piecewise in $\CC$. However,  if for a partition of $[0, \infty)$
$$
[0, \infty) = [0, t_1] \cup [t_1, t_2] \cup \ldots \cup [t_{N-1}, \infty)
$$
$\chi_{[t_j, t_{j+1}]}(t) v(t) \in \CC([t_j, t_{j+1}])$, $0 \leq j \leq N-1$, then $v \in \CC$ --- so this is not a true generalization.

\subsection{Incomplete ionization and energy boundedness}
Strichartz estimates are shown to imply that ionization is controlled by the $\CC$ norm of the path $a(t)$, respectively that the energy stays bounded for all times. In particular, we show that the energy remains bounded for paths
$a(t)$ in $\dot H^{1/2} \cap C$, that the endpoint Strichartz estimates
hold, and that the $L^2$ wave operators exist and are asymptotically complete.

For sufficiently small perturbations of the trajectory in the $\dot H^{1/2} \cap C$ or $\CC$ norm, bound states never vanish entirely. Moreover, we can estimate the mass transfer between $P_c L^2$ and $P_p L^2$ due to their coupling through $\gamma$.

It thus follows that the ionization probability of such quantum
systems is bounded by the $\dot H^{1/2} \cap C$ norm of the path, so, for
small norm, there is no complete ionization:

\begin{corollary}[Incomplete ionization]\lb{ionization}
Let $Z$ be a solution to
$$\lb{11'}
i\partial_t Z + H(t) Z = 0,\ Z(0)= Z_0 \text{ given},
$$
with the potential $S_{3/2}(t)^{-1} V(x) := e^{-\beta(t)(x\dl+2)} V(x-\gamma(t))$ driven by a curve $\gamma(t)=D(t)+2\int_0^t v(s) \dd s$ and of variable scale $\beta(t)$ such that
$$
e^{\beta} \dot v,\ e^{-\beta} \dot D \in \dot H^{-1/2} \cap \partial_t^{-1} C,\ \beta \in \dot H^{1/2} \cap C.
$$
Assume that $V \in L^{3/2} \cap L^2$ is real and that $H=-\Delta + V$ admits bound states, but has no threshold eigenstates or resonances.

Then there exist operators $U_{cc}(t):P_c L^2 \to P_c L^2$, $U_{cp}(t): P_c L^2 \to P_p L^2$, $U_{pc}: P_p L^2 \to P_c L^2$, $U_{pp}(t): P_p L^2 \to P_p L^2$, such that
$$
\bpm P_c Z(x-\gamma(t)) \\ P_p Z(x-\gamma(t)) \epm = U(t) \bpm P_c Z(0) \\ P_p Z(0) \epm,\ U(t) = \bpm U_{cc}(t) & U_{pc}(t) \\ U_{cp}(t) & U_{pp}(t) \epm
$$
is a unitary transformation. Furthermore,
$$
\sup_t \|U_{cp}(t)\|_{\mc L(L^2_x, L^2_x)} + \|U_{pc}(t)\|_{\mc L(L^2_x, L^2_x)} \les \|\gamma\|_{\dot H^{1/2} \cap C}.
$$
If $P_p Z(0) \ne 0$ and $\|\gamma\|_{\dot H^{1/2} \cap C}$ is sufficiently small --- more specifically if $\|\gamma\|_{\dot H^{1/2} \cap C} \les \max(1, \frac {\|P_p Z(0)\|_2}{\|P_c Z(0)\|_2})$ --- then
\be
\liminf_{t \to \infty} \|P_p(t) Z(t)\|_{L^2_x} > 0.
\ee
As $t$ goes to $+\infty$, the wave operator defined by
$$
W_+ Z(0) := I -i\int_0^{\infty} e^{-itH_0} V(x-\gamma(t)) P_c Z(t) \dd t
$$
exists and the strong limit
\be\lb{channel}
W_{p+} Z(0):= \slim_{t \to \infty} B(t)^{-1} S(t) P_p Z(t)
\ee
defines a final (oscillating) state for the negative energy part of the solution.
\end{corollary}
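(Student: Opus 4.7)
The plan is to derive all four claims from Proposition~\ref{prop23} combined with the unitarity of the underlying Schr\"odinger evolution. Since $S_{3/2}(t)^{-1}V$ is real, the Hamiltonian $H(t)$ is self-adjoint and generates a unitary propagator $\mc U(t,0)$ on $L^2$. Conjugating by the $L^2$-isometry $S(t)$ of \eqref{isom} moves to a frame in which the potential becomes the stationary $V$ plus a first-order drift sourced by the distributional derivatives of the path parameters $\gamma,\beta,v$. Writing $P_c,P_p$ for the spectral projections of $-\Delta+V$, I would define
$$
U_{pc}(t):=P_cS(t)\mc U(t,0)S(0)^{-1}P_p,\qquad U_{cp}(t):=P_pS(t)\mc U(t,0)S(0)^{-1}P_c,
$$
and similarly $U_{cc},U_{pp}$; these give the four blocks of $U(t)$ in the statement, and $U(t)$ is unitary on $P_cL^2\oplus P_pL^2$ because $S(t)$ and $\mc U(t,0)$ are and $P_c+P_p=I$.

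For the off-diagonal estimate, take $\xi\in P_pL^2$, initial data $S(0)^{-1}\xi$ and $F=0$, so that $U_{pc}(t)\xi=P_cS(t)Z(t)$. Iterating the Duhamel formula once, the transition from the bound-state to the dispersive subspace appears as the pairing of the $\CC$-valued bound-state coordinate of the solution (controlled by the second term on the right-hand side of Proposition~\ref{prop23}) against $\dot\gamma$ together with $\dot\beta,\dot v$, which by hypothesis lie in $\CC'\subset\CC^*$. The $\CC$--$\CC'$ duality then produces the linear dependence on $\|\gamma\|_{\dot H^{1/2}\cap C}$ (absorbing the analogous contributions of $\beta$ and $v$), and the symmetric argument with the roles of $P_c,P_p$ exchanged controls $U_{cp}$.

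For incomplete ionization, the $(p,p)$ block of $U(t)^*U(t)=I$ reads $U_{pp}^*U_{pp}=I_{P_p}-U_{pc}^*U_{pc}$, so $\|U_{pp}(t)\eta\|_{L^2}\ge\sqrt{1-C\|\gamma\|_{\dot H^{1/2}\cap C}^{\,2}}\,\|\eta\|_{L^2}$ for $\eta\in P_pL^2$, whence
$$
\|P_pZ(t)\|_{L^2}\ge\sqrt{1-C\|\gamma\|_{\dot H^{1/2}\cap C}^{\,2}}\,\|P_pZ(0)\|_{L^2}-C\|\gamma\|_{\dot H^{1/2}\cap C}\,\|P_cZ(0)\|_{L^2},
$$
strictly positive and uniform in $t$ under the stated smallness hypothesis, yielding the $\liminf$. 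For $W_+$, apply $e^{itH_0}$ to the Duhamel formula and use H\"older with $V\in L^{3/2}$ against $P_cS(t)Z\in L^2_tL^{6,2}_x$ from Proposition~\ref{prop23} to place the integrand in $L^1_tL^2_x$; Cook's argument then gives the stated representation. For $W_{p+}$, Proposition~\ref{prop23} puts $B(t)^{-1}A(t)P_pS(t)Z(t)$ in $\CC\subset(\dot H^{1/2}\cap C)+\BV$, whose elements have strong $L^2$ limits at infinity, and since $A(t)\in\CC_t$ converges to a unitary $A_\infty$, this transfers to the existence of the strong limit in \eqref{channel}.

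The main obstacle is the second step: while Proposition~\ref{prop23} immediately yields $\|U_{pc}\|,\|U_{cp}\|=O(1)$, extracting the linear factor of $\|\gamma\|_{\dot H^{1/2}\cap C}$ forces one to reopen its proof, identify the bound-state/dispersive coupling as being sourced precisely by the drift of the path in the modulation equation of Section~\ref{sect_3}, and estimate that source in the $\CC^*$ norm.
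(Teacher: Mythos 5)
Your plan is essentially the paper's: the blocks of $U(t)$ come from conjugating the unitary Schr\"odinger flow by $S(t)$, and the off-diagonal bound is obtained by returning to the modulation equation (\ref{modul}) and the Duhamel formula from the proof of Proposition~\ref{prop23}. The step you flag as the main obstacle is in fact exactly the paper's content: $U_{cp}(t)P_cZ(0)=\int_{-\infty}^t\tau^*B(t)B(s)^{-1}A(s)T^*P_cZ(s)\dd s$ carries the factor of $\|\gamma\|_{\CC}$ because $T^*$ is built from $K=e^{\beta}\dot v\cdot x-ie^{-\beta}\dot D\dl-i\dot\beta(x\dl+3/2)$, which lies in $\CC'$, so the $\CC$--$\CC'$ duality together with Lemma~\ref{lema35} produces the linear bound; the same mechanism, applied to the inverted Duhamel operator $\big(\tilde T_{I,I}+\tilde T_{I,\tilde V_1}(I-i\tilde T_{\tilde V_2,\tilde V_1})^{-1}\tilde T_{\tilde V_2,I}\big)$ acting on $\tau T^*P_cZ-TB(t)^{-1}\tau P_pZ$, bounds $U_{pc}$. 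Two remarks. Your derivation of the $\liminf$ from $U_{pp}^*U_{pp}=I_{P_p}-U_{pc}^*U_{pc}$ and the triangle inequality is cleaner and more explicit than the paper's, which records the two mass-transfer bounds but leaves the lower bound implicit. And for $W_+$, H\"older with $V\in L^{3/2}$ against $P_cZ\in L^2_tL^{6,2}_x$ places the integrand in $L^2_tL^{6/5,2}_x$, not $L^1_tL^2_x$; the existence of the limit therefore rests on the dual endpoint Strichartz estimate rather than on a Cook's-method $L^1_t$ argument, though the conclusion is unchanged.
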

$W_{p+}$ defined by (\ref{channel}) is also called the channel wave operator corresponding to bound states.

In other words, as $\|\gamma\|_{\dot H^{1/2} \cap C}$ gets smaller, the proportion of mass transferred between $P_c L^2$ and $P_p L^2$ goes to zero.


Contrary to this, if the path is taken to be Brownian, then
ionization results with probability one. This is shown in a separate
work \cite{BecS}; also see \cite{pillet} and \cite{cherem}, \cite{cherem2}. Thus, in some sense, paths of finite $\dot H^{1/2} \cap C$ norm are the
optimal, borderline case for the afore\-men\-tioned results.

Brownian motion is almost surely not in $\dot H^{1/2} \cap C$, but is always continuous and fails to be in $\dot H^{1/2}$ only logarithmically. Indeed, locally in time (e.g.\ for $t \in [0, 1]$) $B_t$ is almost surely in the Besov space $B^{1/2}_{2, \infty}$ and in $H^s$, $s<1/2$.

Thus, our results establish a threshold between the case where only a limited proportion of the mass can be transfered and the one where unlimited mass transfer can occur.

Next, let energy be defined as the sum of kinetic and potential energy,~viz.
$$
E[Z] = E_c[Z] + E_p[Z] = \frac 1 2 \langle -\Delta Z, Z \rangle + \langle V Z, Z \rangle
$$
and also consider the $L^2$ ``mass'' $M[Z] := \|Z\|_2^2$.
Note that $E[Z] = E[P_c Z] + E[P_p Z]$ and that
$$
E[Z](t):= E[Z(x+\gamma(t), t)]
$$
obeys the following conservation law:
\be\lb{econs}
\partial_t E[Z(x+\gamma(t), t)] = -\dot \gamma \langle Z, \dl V(x-\gamma(t)) Z \rangle.
\ee
Mass, on the other hand, is conserved under the time evolution (\ref{11'}).

Since $V \in L^{3/2, \infty}_0$, by writing $V=V_1 +V_2$, where $V_1 \in L^{\infty}$ and $\|V_2\|_{L^{3/2, \infty}}$ is small, one has that
\be\lb{control}
\|Z\|_{L^{6, 2}}^2 \les E[Z] + M[Z],
\ee
so
$$
|E_c[Z]| + |E_p[Z]| \les E[Z] + M[Z].
$$

Clearly, if $\gamma \in BV$ and $\dl V \in L^{\infty}$, energy remains uniformly bounded by (\ref{econs}). The same is true even if $\dl V \in L^{3/2, \infty}$, since then $\langle Z, \dl V(x-\gamma(t)) Z \rangle$ is controlled by (\ref{control}) and, by Gronwall's inequality, for all $t \geq 0$
$$
E[Z](t) \les e^{\|\gamma\|_{BV}} \big(\|Z(0)\|_{H^1} + \|\gamma\|_{BV} M[Z]\big).
$$

However, energy boundedness also holds under weaker conditions on $\gamma$, of the kind we assume in this paper (i.e.\ for nondifferentiable paths):
\begin{theorem}[Energy boundedness]\lb{energy} Let $Z$ solve
$$
i\partial_t Z + H(t) Z = 0,\ Z(0)= Z_0 \text{ given}.
$$
Assume that $V \in L^{3/2} \cap L^2$ is such that $H=-\Delta+V$ has no threshold eigenstate or resonance and that $\dl V \in L^{3/2, \infty}$.

Further assume that $V$ has a scale $\beta(t)$ and moves along $\gamma(t)=D(t)+2\int_0^t v(s) \dd s$, with $e^{\beta} \dot v, e^{-\beta} \dot D \in \dot H^{-1/2} \cap \partial_t^{-1} C$, $\beta \in \dot H^{1/2} \cap C$.
Then
$$
E[Z(x+\gamma(t), t)] \les \|Z(0)\|_{H^1}^2
$$
and this bound also holds for the kinetic and potential energies separately.
\end{theorem}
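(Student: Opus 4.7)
The plan is to reduce the energy bound to a uniform $H^1$-bound on $Z$ and then derive the latter by applying the Strichartz estimates of Proposition~\ref{prop23} to $\dl Z$. First, by mass conservation $\|Z(t)\|_2=\|Z(0)\|_2$, and by (\ref{control}) the total, kinetic, and potential energies of $Z(\cdot+\gamma(t), t)$ are all controlled by $\|\dl Z(t)\|_2^2+\|Z(0)\|_2^2$. It therefore suffices to prove the uniform bound $\|\dl Z\|_{L^{\infty}_t L^2_x}\les\|Z(0)\|_{H^1}$.

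Commuting $\dl$ with $H(t)=-\Delta+V(\cdot-\gamma(t))$ produces only the lower-order term $[\dl,H]=(\dl V)(\cdot-\gamma)$, so $\dl Z$ solves the inhomogeneous Schr\"odinger equation
$$i\partial_t(\dl Z)+H(t)(\dl Z)=-(\dl V)(x-\gamma(t))\,Z.$$
Applying Proposition~\ref{prop23} componentwise to $\dl Z$ with source $F:=-(\dl V)(\cdot-\gamma)Z$ reduces matters to controlling the source norms. For the continuous-spectrum part, H\"older in Lorentz spaces together with the Strichartz estimate on $Z$ itself yields $\|P_c S(t)F\|_{L^2_t L^{6/5,2}_x}\les\|\dl V\|_{L^{3/2,\infty}}\|P_c S(t) Z\|_{L^2_t L^{6,2}_x}\les\|Z(0)\|_2$. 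The bound-state part of $\dl Z$ is handled directly: integration by parts in $x$ gives $P_p S(t)\dl Z=-\sum_j\phi_j\langle\dl\phi_j, S(t) Z\rangle$, whence $\|P_p S(t)\dl Z\|_{L^{\infty}_t L^2_x}\les\|Z(0)\|_2$ from mass conservation and $\phi_j\in H^1$.

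The hard part will be estimating the bound-state source norm $\|B^{-1}A P_p S(t) F\|_{\CC'}$. Its coefficients are the time-dependent scalars $\langle\phi_j, S(t)F\rangle=-\langle(\dl V)\phi_j, S(t) Z\rangle$, i.e., linear pairings of the solution against the fixed functions $(\dl V)\phi_j$. These are precisely the objects for which the second (pairing) estimate of Proposition~\ref{prop23} provides $\dot H^{1/2}\cap C$-control, provided $(\dl V)\phi_j$ lies in the admissible weighted class $\langle\dl\rangle^{-1}\langle x\rangle^{-1-\epsilon}L^2_x$; the requisite regularity and exponential decay of $\phi_j$ follow, under the hypotheses (absence of threshold eigenstates or resonances together with $V\in L^{3/2}\cap L^2$ and $\dl V\in L^{3/2,\infty}$), from the argument underlying Lemma~\ref{lema36}. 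Translating the resulting $\dot H^{1/2}\cap C$-bounds into the required $\CC'$-bound through the structure of $A(t)$, $B(t)$ will close the estimate, and summing the $P_c$ and $P_p$ contributions together with the $L^2$-unitarity of $S(t)$ will yield $\|\dl Z\|_{L^{\infty}_t L^2_x}\les\|Z(0)\|_{H^1}$, the desired uniform bound.
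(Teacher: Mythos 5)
Your strategy of reducing to a uniform $H^1$-bound on $Z$ and bounding $P_p\dl Z$ by integration by parts and mass conservation agrees with the paper's. But the way you set up the Strichartz step for the dispersive part has a genuine gap. You apply $\dl$ to $Z$ itself, giving the source $F=-(\dl V)(\cdot-\gamma)Z$, and then try to feed $F$ into Proposition~\ref{prop23}. The commutator term splits as $(\dl V)Z=(\dl V)P_cZ+(\dl V)P_pZ$, and the second piece cannot be placed in the admissible source spaces. Its bound-state coefficients $\zeta_j(t)=\langle g_j(\cdot-\gamma),Z\rangle$ are merely bounded in $t$ (bound states persist, oscillate, and do not decay), so they lie in $L^\infty_t$ but not in $L^2_t$, nor in $\CC'=\dot H^{-1/2}\cap\partial_t C+\mathcal M$. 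Concretely, your claimed inequality $\|P_cS(t)F\|_{L^2_tL^{6/5,2}_x}\les\|\dl V\|_{L^{3/2,\infty}}\|P_cS(t)Z\|_{L^2_tL^{6,2}_x}$ is false: $P_cS(t)F$ also contains $P_c[(\dl V)P_pS(t)Z]$, which is only $L^\infty_tL^2_x$. On the $P_p$ side, the scalars $\langle(\dl V)g_j,S(t)Z\rangle$ are at best in $\dot H^{1/2}\cap C\subset\CC$, which is not the dual-type space $\CC'$ required in Proposition~\ref{prop23}, and multiplying by $B^{-1}A$ does not change this. The sentence ``translating\ldots\ will close the estimate'' hides precisely this type mismatch.

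The paper avoids the obstruction by taking the gradient of $P_c(t)Z(t)$ rather than of $Z$. In the equation for $\dl(P_cZ)$ the commutator with $V$ produces $(\dl V)P_cZ$, which \emph{is} in $L^2_tL^{6/5,2}_x$ by H\"older plus the Strichartz bound $\|P_cZ\|_{L^2_tL^{6,2}_x}\les\|Z(0)\|_2$; the remaining singular source is $i(\partial_tP_c(t))Z(t)$, which is distributional in $t$ (through $\dot\gamma$) but finite rank and spatially localized, and is estimated by Lemma~\ref{lema35} together with the regularity of the $g_j$ from Lemma~\ref{lema36}. The bound-state energy is then dispatched separately from mass conservation, as you also do. A secondary issue with your plan: even modulo the $\CC'$ mismatch, you would need $(\dl V)\phi_j\in\langle\dl\rangle^{-1}\langle x\rangle^{-1-\epsilon}L^2_x$, which requires an $L^2$-bound on $\dl^2V\cdot\phi_j$; the hypotheses $V\in L^{3/2}\cap L^2$, $\dl V\in L^{3/2,\infty}$ do not supply this, and indeed the paper uses regularity/decay of the eigenfunctions $g_j$ (Lemma~\ref{lema36}), not extra smoothness of~$V$.
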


In the absence of bound states for $H = -\Delta+V$, this result can be further improved, as above, in the sense that we could take $\beta$, $v$, $D \in L^{\infty}$, of locally small variation, and $V \in L^{3/2, \infty}_0$. This is due to the fact that the proof of energy boundedness is based on Strichartz estimates, which still hold under these relaxed conditions.

\subsection{History of the problem} A rich literature exists concerning the Schr\"{o}\-din\-ger equation with moving and time-dependent potentials.

In the case of random motion, Pillet \cite{pillet} --- and Cheremshantsev \cite{cherem} \cite{cherem2} in the specific case of Brownian motion --- showed the existence and asymptotic completeness of the $L^2$ wave operators $\slim_{t \to \infty} e^{itH} e^{-itH_0}$. Thus, random motion of this kind will destroy the bound states of $-\Delta+V$, a behaviour opposite to the one we find for $\CC$ paths.

Other results (i.e.\ kernel decay or Strichartz estimates) for Schr\"{o}dinger's equation, in the time-periodic or quasiperiodic case, were obtained by Goldberg \cite{gol}, Costin--Lebowitz--Tanveer \cite{clt}, Galtbayar--Jensen--Yajima \cite{gjy}, Bourgain \cite{bou2}, and Wang \cite{wang}.

Results concerning time-dependent potentials of different kinds also belong to Howland \cite{how}, Kitada--Yajima \cite{kiya}, Rodnianski--Schlag \cite{rodsch}, and to Bourgain \cite{bou1}, \cite{bou3}, \cite{bou4}.

For linear in time trajectories, the optimal $L^p$ decay estimates,
which imply the Strichartz estimates, were proven in the multicenter
case (the charge transfer problem) by Rodnianski--Schlag--Soffer
\cite{rod2}.

\cite{bec}, in a context similar to that of the current paper, assumed that one can control the negative energy (bound-state) component of the solution $P_p(t) Z(t)$ in the Strichartz norm and used this to prove Strichartz estimates for the whole solution $Z(t)$. In particular, this approach worked if $H=-\Delta+V$ had no bound states at all.

By contrast, in this paper we do not assume any control of the bound states, but derive it explicitly instead, by solving the modulation equations in the rough function space $\Gamma$. For reasons explained above, this space has optimal regularity, as far as the results of this paper are concerned.

\subsection{An application} An interesting consequence is the following:

Consider a model of a heavy classical particle, with trajectory
given by a path $\gamma(t)$, going through a quantum fluid
described by NLS.

Then, the corresponding nonlinear equation for the
effective wave function satisfies the following NLS:
\be\lb{NLS}
i\frac{\partial \psi}{\partial t}=H(t)\psi + F(|\psi|)\psi,
\ee
where $H(t)$ is of the type we consider in this paper:
$$
H(t)=-\Delta+ V(x-\gamma(t)),\ \gamma(t) = D(t) + 2 \int_0^t v(s) \dd s.
$$
Absent the motion of the potential, this problem has two conserved quantities, ``mass'' ($M[\psi] := \|\psi\|_2^2$) and energy:
$$
E[\psi] := \frac 1 2  \int_{\R^3} |\dl \psi(x, t)|^2 + V(x-\gamma(t)) |\psi(x, t)|^2 \dd x + \int_{\R^3} G(|\psi(x, t)|) \dd x,
$$
where $G(y)=\int F(y) \dd y$.

Setting $V$ in motion does not alter the $L^2$ norm conservation, but energy is no longer conserved (i.e.\ time-independent), only bounded, in this time-dependent case, as we shall see below.

An even more general type of time dependence, which includes the rescaling of the potential (corresponding to possible mass renormalization of
the moving particle), makes sense physically and can also be included in the model.

It then follows that:
\begin{theorem}
Let $\psi$ satisfy equation (\ref{NLS}) in $\R^{3+1}$ with $V(x)$ chosen such that $-\Delta+V$ has neither bound states, nor zero energy resonances.\\
Assume that $V \in L^{3/2, \infty}_0$ 
has a scale $\beta(t)$ and moves along $\gamma(t)=D(t)+2\int_0^t v(s) \dd s$, with $\beta$, $D$, and $v$ in $L^{\infty}$, of locally small variation. 
 $F(y)$ satisfies a growth bound at infinity and vanishes of sufficient order near zero:
$$
F(y)\leq C \max(|y|^{7/3}, |y|^5),\ F'(y) \leq C \max(|y|^{4/3}, |y|^4).
$$
Then $\psi$  satisfies the endpoint Strichartz estimates, for all small enough initial data in $H^1$.
\end{theorem}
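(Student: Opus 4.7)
The plan is to run a small-data Banach fixed-point argument for the Duhamel representation of (\ref{NLS}), based on the linear endpoint Strichartz estimate of Proposition~\ref{prop23}. Under the standing hypotheses (no bound states, no threshold resonance for $-\Delta+V$, $V\in L^{3/2,\infty}_0$, and $\beta,v,D\in L^\infty$ of locally small variation), the simplified form of Proposition~\ref{prop23} applies and reduces essentially to Lemma~\ref{lemma2.1}, giving the $L^2$-level endpoint estimate in the norm $L^\infty_t L^2_x\cap L^2_t L^{6,2}_x$. First I would upgrade this to the $H^1$-level: since $-\Delta+V$ is a non-negative self-adjoint operator with purely absolutely continuous spectrum, $\|(-\Delta)^{1/2}f\|_{L^2}\sim\|H^{1/2}f\|_{L^2}$, and conjugation by the $L^2$-isometry $S(t)$ transfers the equivalence to $H(t)$, the low time regularity of $\beta,v,D$ being absorbed by the locally-small-variation hypothesis. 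This yields
$$
\|\psi\|_{Y}:=\|\psi\|_{L^\infty_t H^1_x}+\|\langle\dl\rangle\psi\|_{L^2_t L^{6,2}_x}\lesssim \|\psi_0\|_{H^1}+\|\langle\dl\rangle G\|_{L^1_t L^2_x + L^2_t L^{6/5,2}_x},
$$
and, via Morrey's inequality $W^{1,6}(\R^3)\hookrightarrow L^\infty(\R^3)$, also $\|\psi\|_{L^2_t L^\infty_x}\lesssim\|\psi\|_Y$.

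Next, apply this estimate to the Duhamel map
$$
\Phi(\psi)(t):=U(t,0)\psi_0-i\int_0^t U(t,s)\bigl(F(|\psi|)\psi\bigr)(s)\,ds
$$
on a small ball of $Y$. From the hypotheses on $F$ and $F'$, one has the pointwise bound $|F(|\psi|)\psi|+|\dl(F(|\psi|)\psi)|\lesssim\bigl(|\psi|^{7/3}+|\psi|^{5}\bigr)(|\psi|+|\dl\psi|)$. The near-zero piece (from $|\psi|^{7/3}$) is $H^1$-subcritical and is bounded in $L^2_t L^{6/5,2}_x$ by $\lesssim\|\psi\|_Y^{10/3}$ using standard Sobolev-H\"older with $H^1\hookrightarrow L^p$ for $p\in[2,6]$. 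The far-from-zero piece (from $|\psi|^5$) is controlled via the Lorentz-refined H\"older inequality $\|fg\|_{L^{6/5,2}_x}\lesssim\|f\|_{L^{3/2,\infty}_x}\|g\|_{L^{6,2}_x}$ together with the real-interpolation identity $\|\psi\|_{L^{15/2,\infty}_x}\lesssim\|\psi\|_{L^6_x}^{4/5}\|\psi\|_{L^\infty_x}^{1/5}$, which, combined with the $L^2_t L^\infty_x$ bound above and the fact that the $|y|^5$ growth only saturates on $\{|\psi|>1\}$ (of small measure for small data by Chebyshev), yields a bound of the form $\lesssim\|\psi\|_Y^6$. The analogous Lipschitz estimate for $\Phi(\psi_1)-\Phi(\psi_2)$ follows from $|F(|\psi_1|)\psi_1-F(|\psi_2|)\psi_2|\lesssim(\max(|\psi_1|,|\psi_2|)^{7/3}+\max(|\psi_1|,|\psi_2|)^5)|\psi_1-\psi_2|$. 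For $\|\psi_0\|_{H^1}$ sufficiently small, both contributions are superlinear in $\|\psi\|_Y$, so $\Phi$ is a strict contraction on a small ball of $Y$, and the unique fixed point is the desired global solution, satisfying the endpoint Strichartz estimate.

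The principal obstacle is the $H^1$-supercritical tail coming from $F(y)\leq C|y|^5$ at infinity: the corresponding gradient term $|\psi|^5\dl\psi$ lies one power above the $H^1$-critical scaling in three dimensions and fails to close under any H\"older decomposition in plain admissible $L^q_t L^r_x$ Strichartz pairs. The Lorentz refinement $L^2_t L^{6,2}_x$ of the endpoint in Proposition~\ref{prop23} is essential here: the corresponding Lorentz H\"older, together with the real-interpolation identity for $L^{15/2,\infty}$ and Morrey's embedding $W^{1,6}\hookrightarrow L^\infty$ in three dimensions, allow one to absorb the supercritical growth into a combined smallness factor. A secondary, technical difficulty is the derivation of the $H^1$-level Strichartz from the $L^2$-level one of Proposition~\ref{prop23} under the merely bounded, non-differentiable time dependence of $\beta,v,D$; this is handled by the intertwining property of the $L^2$-isometry $S(t)$ with the positive operator $H$, which makes the gradient of the solution satisfy the same linear Strichartz inequality modulo absorbable terms.
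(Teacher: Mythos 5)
Your route is genuinely different from the paper's, and it contains a gap that I don't think can be repaired within the framework you set up. The paper does \emph{not} differentiate the equation: it applies the $L^2$-level endpoint Strichartz estimate from Proposition~\ref{prop23} to the undifferentiated Duhamel integral, controls $\|F(|\psi|)\psi\|_{L^2_t L^{6/5,2}_x}$ via the two pointwise-in-$t$ Lorentz--H\"older estimates for $|\psi|^{7/3}$ and $|\psi|^5$, and obtains the missing $L^\infty_t \dot H^1_x$ (hence $L^\infty_t L^{6,2}_x$) input from the \emph{energy identity}, not from a gradient Strichartz estimate. You instead posit an $H^1$-level Strichartz norm $Y$, differentiate the nonlinearity, and invoke Morrey. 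Two things go wrong.

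First, the critical term $|\psi|^5\dl\psi$ (coming from the $|y|^5$ tail of $F$ and $F'$) does not close in your norms. The available norms scale as: $\|\psi\|_{L^\infty_t L^6_x}$ and $\|\dl\psi\|_{L^q_t L^r_x}$ for admissible $(q,r)$ scale like $\dot H^1$, $\|\psi\|_{L^2_t L^\infty_x}$ scales like $\dot H^1$. Writing $|\psi|^5\dl\psi$ with $a$ factors in $L^\infty_t L^6_x$, $b$ in $L^\infty_t L^2_x$, $c$ in $L^2_t L^\infty_x$, $a+b+c=5$, and $\dl\psi$ in an admissible pair determined by the leftover H\"older exponents, the admissibility relation $2/q+3/r=3/2$ forces $a-b=6$, which is incompatible with $a\leq 5$, $b\geq 0$. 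So no such splitting exists; the Chebyshev ``small-measure'' remark does not fix this, because it addresses size, not scaling (it would still require $\psi\in L^{10}_t L^\infty_x$, which neither Morrey nor Strichartz supplies). The paper avoids this entirely because $\|\psi^5\|_{L^2_t L^{6/5,2}_x}\les\|\psi\|_{L^\infty_t L^6_x}^4\|\psi\|_{L^2_t L^{6,2}_x}$ closes without touching $\dl\psi$, with the $L^\infty_t L^6_x$ factor coming from the energy bound. Second, the step ``conjugation by $S(t)$ transfers the equivalence $\|(-\Delta)^{1/2}f\|_2\sim\|H^{1/2}f\|_2$ to $H(t)$'' is not justified: $S(t)$ does \emph{not} intertwine $-\Delta$ (dilation scales it by $e^{-2\beta}$, boost adds lower-order terms), and differentiating the equation directly produces a $\dl V$ term that the hypothesis $V\in L^{3/2,\infty}_0$ does not control. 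The paper sidesteps this by keeping the Strichartz input at the $L^2$ level and routing the $H^1$ information through the energy identity instead.
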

The size condition needed to ensure global existence depends on $F$, $V$, and $D$. For large initial data we only obtain local well-posedness, because we are not assuming here that $F$ has any particular sign.

The condition on $F$ ensures that one can bound the contribution of the semilinear term by a combination of the following inequalities:
$$
\|\psi^{7/3}\|_{L^2_t L^{6/5, 2}_x} \les \|\psi\|_{L^{\infty}_t L^2_x} \|\psi\|_{L^2_t L^{6, 2}_x}^{4/3},\ \|\psi^5\|_{L^2_t L^{6/5, 2}_x} \les \|\psi\|_{L^{\infty}_t L^{6, 2}_x} \|\psi\|_{L^2_t L^{6, 2}_x},
$$
and the energy identity, where the energy
$$
E[\psi](t) = \frac 1 2  \int_{\R^3} |\dl \psi(x, t)|^2 + V(x-\gamma(t)) |\psi(x, t)|^2 \dd x + \int_{\R^3} G(|\psi(x, t)|^2) \dd x
$$
is a uniformly bounded (in time)  quantity.

The proof follows from a direct application of the endpoint
Strichartz estimates to the linear and non-homogenous terms of the
equivalent integral equation, by bootstrapping. The energy bound is proved by combining the energy identity with the Strichartz bound above, in the same way as in the linear case. Again, the smallness of the initial data gives control of the nonlinear contribution to the energy estimate.

Furthermore, by writing an iterated expansion, to some large but finite order, using the Duhamel formula, of the contribution of the semilinear term, and applying Strichartz estimates, the decay condition near zero can be improved to $F(y) \sim |y|^{2+\epsilon}$, where $2$ is the Strauss exponent for this equation.

\section{Notations and basic estimates}
\subsection{Notations}
Note that the Galilean coordinate change
$$
\gamma(t) \mapsto \gamma(t) + y + vt
$$
for fixed $y$, $v \in \set R^3$ corresponds to an isometric transformation of the solution
$$
Z(x, t) \mapsto e^{ivx} Z(x + vt + y, t).
$$
Thus, we are always justified in taking $\gamma(0) = 0$ --- and, more generally, in characterizing $\gamma$ through a seminorm that vanishes for linear functions.

Likewise, an $L^2$-unitary dilation of the form
$$
Z \mapsto e^{\beta(x\dl + 3/2)} Z := e^{3/2\beta} Z(e^{\beta} x)
$$
preserves the equation (\ref{1.1}) if we also rescale $V$ by
$$
V \mapsto e^{\beta(x \dl + 2)} Z := e^{2\beta} Z(e^{\beta} x).
$$
This rescaling is dictated by the equation (has to coincide with that of~$-\Delta$).



Let $V = V_1 V_2$, where $V_1 = |V|^{1/2}$ and $V_2 = |V|^{1/2} \sgn V$.
For a curve $\gamma:[0, \infty) \to \set R^3$, define the operator $T(\gamma)$ on $L^2_{t, x}$ by
\be\lb{def_t}
(T(\gamma) F)(t) = \int_0^t V_2 e^{i(t-s)\Delta} e^{(\gamma(t)-\gamma(s)) \dl} V_1 F(s) \dd s.
\ee
For such an operator $T$, we denote its integral kernel by $T(t, s)$:
$$
(T F)(t) = \int_\R T(t, s) F(s) \dd s.
$$
Let $P_N$ be Paley-Wiener projections on the dyadic frequencies $N$. We denote Lorentz spaces by $L^{p, q}$ and the homogenous Besov spaces by $\dot B^s_{p, q}$:
$$
\dot B^s_{p, q} = \{f \mid \big\|N^s \|P_N f\|_{L^p}\big\|_{\ell^q_N} < \infty \}.
$$
Also let $\dot H^s = \dot B^s_{2, 2}$; in the Hilbert space case this is the same as $\dot H^s$, but in general they need not coincide. $\dot H^{1/2}$ is a seminormed space and characterizes functions up to a constant.

Also, let\\
* $C$ be the space of continuous, bounded functions, with the $L^{\infty}$ norm;\\
* $\mc M$ be the set of finite-mass complex-valued Borel measures on $\set R$; $\mc M = C^*$;\\
* $\BV$ be the space of functions of finite variation: $\|f\|_{\BV} = \|f'\|_{\mc M} + \|f\|_{L^{\infty}}$. We take all functions in $\BV$ to be right-continuous at every point.

$\mc M^*$ is the space of bounded Borel measurable functions on $\R$ endowed with the supremum norm; it includes $C$ as a closed subspace. Likewise, an element of $C^*$ is locally a measure and $C^*$ includes $\mc M$ as a closed subspace.

\subsection{Basic estimates}
To begin with, we prove a classical characterization of the Besov spaces $\dot B^{1/2}:= \dot B^{1/2}_{2, 2}$. This approach, which does not involve the Fourier transform, also applies to the mixed norm $\dot H^{1/2}_t X$, for any Banach space $X$.

Consider a smooth function $\chi$ such that $\chi(h) \les_n \langle h \rangle^{-n}$ for each $n$ and $\widehat \chi(\lambda)=0$ for $|\lambda| \leq 1/2$ and $|\lambda| \geq 4$, $\widehat \chi(\lambda)=1$ for $|\lambda| \in [1, 2]$, and $\sum_N \widehat \chi(N \lambda) = 1$. Let $P_{N} f(t)= N^{-1} \chi(N^{-1}t) * f(t)$ be the corresponding Paley-Wiener projections.

\begin{lemma}\lb{besov} Let $X$ be a separable Banach space and
$$
\dot B^s X = \{f \mid N^{s} \|P_N f\|_{L^2_t X} \in~\ell^2_N\}.
$$
Then, for $0<s<1$,
$$
\dot B^s X = \{f \mid \big( 2^{sk} \sup_{|h| \leq 2^{-k}} \|f(t+h)-f(t)\|_{L^2_t X} \big) := g(2^k) \in \ell^2_k\}
$$
with equivalent norms:
$$
\|f\|_{\dot B^s_t X} \sim \|g\|_{\ell^2_k}.
$$
\end{lemma}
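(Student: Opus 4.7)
The plan is to prove the two-sided equivalence by relating the Littlewood--Paley blocks of $f$ to its modulus of continuity and then invoking Young's convolution inequality on the dyadic index. Let $\omega(h) := \|f(\cdot+h)-f(\cdot)\|_{L^2_t X}$ and $\tilde\omega(\delta):=\sup_{|h|\leq\delta}\omega(h)$, so that the target quantity is $g(2^k)=2^{sk}\tilde\omega(2^{-k})$.

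For the direction $\|g\|_{\ell^2_k}\les\|f\|_{\dot B^s X}$, I would decompose $f = \sum_N P_N f$. For each dyadic $N$, I would combine the trivial bound $\|P_N f(\cdot+h)-P_N f(\cdot)\|_{L^2_t X}\leq 2\|P_N f\|_{L^2_t X}$ with the Bernstein-type bound obtained from the fundamental theorem of calculus and the size of $\partial_t\psi_N$, where $\psi_N$ denotes the convolution kernel of $P_N$ (the vector-valued Bernstein inequality $\|\partial_t P_N f\|_{L^2_t X}\les (\text{freq scale})\,\|P_N f\|_{L^2_t X}$ being obtained by the standard fattened-projection trick). Taking the minimum, summing over $N$, and restricting to $|h|\leq 2^{-k}$ produces $\tilde\omega(2^{-k})$ as a weighted $\ell^1$ combination of $\|P_N f\|_{L^2_t X}$; after multiplying by $2^{sk}$ and reindexing $N=2^m$, the relation takes the convolution form $(2^{sk}\tilde\omega(2^{-k}))\leq (K*a)_k$ with $a_m:=2^{sm}\|P_{2^m} f\|_{L^2_t X}$ and $K(j) = \min(2\cdot 2^{sj},2^{(s-1)j})$. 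The hypothesis $0<s<1$ is exactly what makes $K\in\ell^1(\Z)$ --- the $s<1$ half controls $j>0$ and the $s>0$ half controls $j<0$ --- so Young's inequality in $\ell^2_k$ finishes.

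For the reverse direction $\|f\|_{\dot B^s X}\les\|g\|_{\ell^2_k}$, the key input is that $\widehat\chi(0)=0$ (because $\widehat\chi$ is supported in $|\lambda|\in[1/2,4]$), so each kernel $\psi_N$ has vanishing mean. Subtracting $f(t)$ under the convolution,
$$P_N f(t)=\int\psi_N(u)\,[f(t-u)-f(t)]\,du,$$
Minkowski's inequality in the Bochner space $L^2_t X$ (valid because $X$ is separable) gives $\|P_N f\|_{L^2_t X}\leq\int|\psi_N(u)|\tilde\omega(|u|)\,du$. Rescaling to the natural variable of $\chi$, splitting the $u$-integral into dyadic shells $|v|\sim 2^j$, and using the rapid decay $|\chi(v)|\les_n\langle v\rangle^{-n}$ (with $n$ large) to absorb the tails produces, after weighting by the Besov weight of $P_N$, a bound of the sequence $(N^s\|P_N f\|_{L^2_t X})$ by a convolution of $g$ against another absolutely summable kernel in the LP-block index. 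A second Young's inequality in $\ell^2$ closes the estimate.

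The main obstacle will be the careful bookkeeping among the three dyadic indices involved --- the frequency/scale index of the Littlewood--Paley block, the index $k$ of the modulus of continuity, and the shell index $j$ of the $\chi$-integral --- in order to recognize the combined estimate as a genuine $\ell^2 \to \ell^2$ convolution rather than something with a more complicated kernel. The window $0<s<1$ is used once in each direction, to guarantee summability of the relevant kernel at both ends of the dyadic scale. Separability of $X$ enters only through the routine Bochner-space manipulations (Minkowski, interchange of norm and integral) and is otherwise invisible in the argument.
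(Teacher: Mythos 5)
Your proposal is correct and takes essentially the same approach as the paper's proof: the forward direction decomposes $f$ into Littlewood--Paley blocks and combines the trivial bound with the Bernstein derivative bound (the paper splits the $N$-sum at $N=2^k$ rather than taking a pointwise minimum, which is the same bookkeeping), while the reverse direction exploits the vanishing mean of the projection kernel, Minkowski's inequality in the Bochner norm, a dyadic shell decomposition of the $u$-integral, and Young's inequality on the dyadic index. The paper likewise invokes separability only for the routine Bochner-space steps (e.g.\ approximating by step functions to get $\lim_{h\to 0}\|f(\cdot+h)-f(\cdot)\|_{L^2_t X}=0$) and uses $0<s<1$ precisely as you describe, to secure $\ell^1$-summability of the kernel at both dyadic ends.
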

\begin{observation} Another way of writing this is, for $0<s<1$,
$$
\|f\|_{\dot B^s X}^2 \sim \int_{\R \times \R} \frac {\|f(t_1)-f(t_2)\|_X^2}{|t_1-t_2|^{2s}} \dd t_1 \dd t_2.
$$
In particular, it follows that $f(t) \mapsto f(1/t)$ is an isometry of $\dot B^{1/2}X$.
\end{observation}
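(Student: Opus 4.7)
My plan is to derive the double-integral (Gagliardo-type) form from the modulus-of-continuity characterization of Lemma \ref{besov}, and then deduce the $\dot B^{1/2} X$ isometry by a direct change of variables. Setting $t_1 = t+h$, $t_2 = t$ and applying Fubini reduces the double integral to
$$
2\int_0^{\infty} \phi(h)\, h^{-(1+2s)}\, dh, \qquad \phi(h) := \|f(\cdot+h)-f(\cdot)\|_{L^2_t X}^2,
$$
reading the scale-invariant exponent $1+2s$ in the denominator. Dyadic discretization rewrites this as $\sum_k 2^{2sk} \bar\phi_k$, where $\bar\phi_k$ is the average of $\phi$ on $[2^{-k-1}, 2^{-k}]$. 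The upper bound Gagliardo $\lesssim \|f\|_{\dot B^s X}^2$ is immediate: $\bar\phi_k \leq \omega_2(f, 2^{-k})^2$ with $\omega_2(f, r)^2 := \sup_{|h| \leq r}\phi(h)$, and Lemma \ref{besov} closes it.

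The reverse inequality is the heart of the matter. From the $L^2_t X$ triangle inequality applied to translates of $f$ one obtains the quasi-Lipschitz property
$$
\bigl|\phi(h_1)^{1/2}-\phi(h_2)^{1/2}\bigr| \leq \phi(h_1-h_2)^{1/2}.
$$
Fixing $h \in [2^{-k-1}, 2^{-k}]$ and averaging $h'$ over the same shell yields $\phi(h)^{1/2} \leq \psi_k + \omega_2(f, 2^{-k-1})$, where $\psi_k$ is the $L^1$-average of $\phi^{1/2}$ on the shell. Taking the sup over $h$ and writing $\Omega_k := \omega_2(f, 2^{-k})^2$ produces the recursion
$$
\Omega_k^{1/2} \leq \psi_k + \Omega_{k+1}^{1/2},
$$
with the crucial feature that the constant multiplying $\Omega_{k+1}^{1/2}$ is exactly $1$. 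Since $L^2$-continuity of translations forces $\Omega_k \to 0$ as $k \to \infty$, iterating gives $\Omega_k^{1/2} \leq \sum_{j \geq k} \psi_j$. A dyadic Hardy inequality $\sum_k 2^{2sk}\bigl(\sum_{j \geq k} \psi_j\bigr)^2 \lesssim_s \sum_k 2^{2sk}\psi_k^2$, combined with Jensen's inequality $\psi_k^2 \leq \bar\phi_k$, then yields $\sum_k 2^{2sk}\Omega_k \lesssim \sum_k 2^{2sk}\bar\phi_k$, i.e., $\|f\|_{\dot B^s X}^2 \lesssim$ Gagliardo.

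For the $s = 1/2$ corollary, I would change variables $t_i = 1/u_i$ on each half-line: one has $du_i = dt_i/t_i^2$ and $|u_1-u_2| = |t_1-t_2|/|t_1 t_2|$, so the Jacobian factor $(t_1 t_2)^{-2}$ coming from $du_1\, du_2$ cancels exactly with the factor $(t_1 t_2)^{2}$ coming from $|u_1-u_2|^{-2}$, giving pointwise equality of the two Gagliardo integrals. In the Gagliardo norm this is a literal isometry, and via the equivalence above it descends to the Besov norm. The main obstacle is the lower bound: the Gagliardo form is an $L^1$-average of $\phi$ over dyadic shells while the modulus-of-continuity form from Lemma \ref{besov} is a supremum, and these can in principle differ by arbitrarily large factors. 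The key technical point that makes the argument close is that the iteration constant produced by the triangle inequality is exactly $1$, not $1 + \epsilon$; any larger constant would create a geometric factor that would ruin the subsequent dyadic summation.
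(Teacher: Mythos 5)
Your argument is correct, and it supplies a proof where the paper gives none: the Remark is stated as an immediate consequence of Lemma \ref{besov}, with the implicit content being exactly the passage you identify, from the supremum modulus of continuity $\Omega_k=\sup_{|h|\le 2^{-k}}\phi(h)$ to the dyadic averages $\bar\phi_k$. Your two directions are both sound: the trivial bound $\bar\phi_k\le\Omega_k$, and the reverse via subadditivity $|\phi(h_1)^{1/2}-\phi(h_2)^{1/2}|\le\phi(h_1-h_2)^{1/2}$, the shell-averaging recursion $\Omega_k^{1/2}\le\psi_k+\Omega_{k+1}^{1/2}$ with constant exactly $1$, the discrete Hardy inequality (valid for any $s>0$, by Cauchy--Schwarz with a weight $2^{\epsilon(j-k)}$, $0<\epsilon<2s$), and Jensen $\psi_k^2\le\bar\phi_k$. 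This is the standard bridge between the Besov and Sobolev--Slobodeckij forms, and your observation that the iteration constant must be exactly $1$ is the right technical point. Two small remarks. First, you silently replaced the printed exponent $|t_1-t_2|^{2s}$ by $|t_1-t_2|^{1+2s}$; this is the correct move --- the printed exponent is a typo, since only $1+2s$ has the right scaling and only the exponent $2$ (i.e.\ $s=1/2$) makes the $t\mapsto 1/t$ change of variables cancel the Jacobian --- but you should say explicitly that you are proving the corrected statement. Second, the step $\Omega_K\to 0$ as $K\to\infty$ uses continuity of translation, i.e.\ $f\in L^2_t X$; this matches the standing hypothesis in Lemma \ref{besov}, but for the homogeneous seminorm one should either note this or observe that finiteness of $\sum_j 2^{2sj}\psi_j^2$ already forces the boundary term to vanish along a subsequence, which suffices for the iteration. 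Finally, as you note, the map $f(t)\mapsto f(1/t)$ is a literal isometry only of the double-integral expression; on $\dot B^{1/2}X$ with the dyadic norm it is an isomorphism with constants from the equivalence, which is evidently what the Remark intends.
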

\begin{proof} In one direction, assume that $g(2^k) \in \ell^2_k$. Then
$$\begin{aligned}
\|P_{N^{-1}} f\|_{L^2_t X} &\leq \int_\R N \chi(Nh) \|f(t+h)-f(t)\|_{L^2_t X} \dd h \\
&\les N^{1+s} \int_{|h| \leq N^{-1}} \sup_{h \leq N^{-1}} \chi(Nh) g(N^{-1}) \dd h + \\
&+ \sum_{k=1}^\infty \int_{|h| \in [\frac {2^{k-1}} N, \frac {2^k} N]} 2^{-ks} N^{1+s} \sup_{h \in [\frac {2^{k-1}} N, \frac {2^k} N]} \chi(Nh) g(2^k/N) \dd h \\
&\les N^{s} g(N^{-1}) + \sum_{k=1}^\infty 2^{k(1-s)} N^{s} 2^{-nk} g(2^k/N).
\end{aligned}$$
$N^{-s} P_{N^{-1}} f$ is bounded by a convolution of $g(N^{-1}) \in \ell^2$ with an integrable kernel, for $n > 1-s$. Thus, $\|N^{-s} P_{N^{-1}} f\|_{L^2_t X} \in \ell^2_N$, so $f \in \dot B^{s}X$.

Conversely, begin by noting that if $f \in L^2_t X$, then $\lim_{h \to 0} \|f(t+h)-f(t)\|_{L^2_t X} = 0$. Indeed, since $X$ is separable, $f$ can be approximated by a step function that takes finitely many values, for which this conclusion follows right away.

Then, $\lim_{\epsilon \to 0} \|\epsilon^{-1} \chi(\epsilon^{-1} t) * f - f\|_{L^2_t X} = 0$. By the same reasoning, we obtain that $\lim_{R \to \infty} \|R^{-1} \chi(R^{-1} t) * f\|_{L^2_t X} = 0$. This implies that
$$
\lim_{k, \ell \to \infty} \sum_{N=2^{-k}}^{2^{\ell}} P_N f = f.
$$
Because $\lambda \widehat \chi(\lambda) = \phi(\lambda) \widehat \chi(\lambda)$ for some smooth, compactly supported $\phi$, it follows that $\|\partial_t P_1 f\|_{L^2_t X} \les \|f\|_{L^2_t X}$, so $\|\partial_t P_N f\|_{L^2_t X} \les N \|P_N f\|_{L^2_t X}$.

Let $f_N := N^{s} \|P_N f\|_{L^2_X}$. Then
$$\begin{aligned}
g(2^k)&=2^{ks} \sup_{|h| \leq 2^{-k}} \|f(t+h)-f(t)\|_{L^2_t X} \\
&\les 2^{ks} \sum_N \sup_{|h| \leq 2^{-k}} \|P_N f(t+h) - P_N f(t)\|_{L^2_t X} \\
&\les 2^{ks} \Big(2^{-k} \sum_{N \leq 2^k} \|\partial_t P_N f\|_{L^2_t X} + \sum_{N\geq 2^k} \|P_N f\|_{L^2_t X} \Big) \\
&\les 2^{k(s-1)} \sum_{N \leq 2^{k+1}} N^{1-s} f_N + 2^{ks} \sum_{N \geq 2^k} N^{-s} f_N.
\end{aligned}$$
Since this is the convolution of $(f_N)_N$ with an integrable kernel and $\|(f_N)_N\|_{\ell^2_N} \les \|f\|_{\dot H^s X}$, it follows that $g(2^k) \in \ell^2_k$.
\end{proof}

For a Banach space $X$, we also define $BV_t X$ by
$$
BV_t X = \{f \in L^{\infty}_t X \mid \sup_n \sup_{t_1 < t_2 < \ldots < t_n} \sum_{j=1}^{N-1} \|f(t_{j+1})-f(t_j)\|_X < \infty\}.
$$
We require that functions in $BV_t X$ should be right-continuous. For $f \in BV_t X$, one can define a measure $|f'| \in \mc M$ by
$$
|f'|(I) := \sup_n \sup_{t_1 < t_2 < \ldots < t_n \in I} \sum_{j=1}^{N-1} \|f(t_{j+1})-f(t_j)\|_X.
$$
Then $f' = f_{\infty} |f'|$, with $f_{\infty} \in L^{\infty}(d|f'|, X)$. Conversely, any function with such a decomposition is in $BV_t X$.

Although false in general, for a Hilbert space it is true that $\dot B^{s} = \dot H^{s}$:
\begin{lemma}
Let $X$ be a Hilbert space and $f \in L^2 X$. Then $\|f\|_{L^2_t X} = \|\widehat f\|_{L^2_t X}$, where $\widehat f(\lambda) = \int_{-\infty}^{\infty} e^{-it\lambda} f(t) \dd t$. Moreover, $\dot B^s X = \dot H^s X$, where
$$
\dot B^s X = \{f \mid N^s\|P_N f\|_{L^2 X} \in \ell^2_N\},\ \dot H^s X = \{f \mid |\lambda|^s\widehat f \in L^2_{\lambda} X\}.
$$
\end{lemma}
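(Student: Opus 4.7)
The plan has two steps: prove Plancherel for $X$-valued functions, then deduce the Besov/Sobolev identification by a standard Littlewood--Paley computation.

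\emph{Step 1 (Plancherel).} Restricting to the separable closed subspace spanned by the essential range of $f$, pick an orthonormal basis $(e_j)_j$ and decompose
$$
f(t) = \sum_j f_j(t)\, e_j, \qquad f_j(t) := \langle f(t), e_j \rangle_X.
$$
By Parseval in $X$ combined with Tonelli, $\|f\|_{L^2_t X}^2 = \sum_j \|f_j\|_{L^2_t}^2$. The basis is independent of $t$, so the Fourier transform commutes with the expansion: $\widehat f(\lambda) = \sum_j \widehat{f_j}(\lambda)\, e_j$ as an $L^2_\lambda X$ element. Applying scalar Plancherel to each $f_j$ and summing gives $\|\widehat f\|_{L^2_\lambda X}^2 = \sum_j \|\widehat{f_j}\|_{L^2_\lambda}^2 = \|f\|_{L^2_t X}^2$.

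\emph{Step 2 (Besov vs.\ Sobolev).} Since $P_N$ is convolution in $t$ with a scalar kernel, its Fourier multiplier is $\widehat\chi(N\lambda)$. Step 1 applied to $P_N f$ therefore yields
$$
\|P_N f\|_{L^2_t X}^2 = \int_{\R} |\widehat\chi(N\lambda)|^2 \, \|\widehat f(\lambda)\|_X^2 \dd\lambda.
$$
Multiplying by $N^{2s}$, summing over dyadic $N$, and using Tonelli,
$$
\|f\|_{\dot B^s X}^2 = \int_{\R} K(\lambda)\, \|\widehat f(\lambda)\|_X^2 \dd\lambda,\qquad K(\lambda) := \sum_N N^{2s}\, |\widehat\chi(N\lambda)|^2.
$$
For each fixed $\lambda \neq 0$ only $O(1)$ dyadic $N$'s satisfy $N|\lambda|\in[1/2,4]$, and these all obey $N \sim |\lambda|^{-1}$; the partition-of-unity identity $\sum_N \widehat\chi(N\lambda)=1$ forces $|\widehat\chi(N\lambda)|$ to be bounded below by a positive constant on at least one such $N$, so $K(\lambda) \sim |\lambda|^{-2s}$ uniformly in $\lambda$. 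This identifies $\|f\|_{\dot B^s X}$ with the Sobolev norm of $\widehat f$ (with the index matched to the Paley--Wiener scale convention of the preceding lemma).

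The only substantive step is the Hilbert-valued Plancherel of Step 1, which is where Hilbertness (as opposed to mere Banachness) is essential — the basis expansion fails without Parseval; everything after is a scalar Littlewood--Paley bookkeeping with the $X$-norm carried inside the Fourier integral.
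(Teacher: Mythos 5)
Your argument is correct and follows essentially the same route as the paper's: reduce the Besov--Sobolev identification to Hilbert-valued Plancherel, then apply it to $P_N f$ for each dyadic $N$. The paper's Step~1 is terser --- it quotes the Parseval pairing $\langle f, g\rangle = \langle \widehat f, \widehat g\rangle$ and sets $f=g$ --- whereas you prove Plancherel directly via an orthonormal-basis expansion and scalar Plancherel componentwise; this is slightly more self-contained, and both versions isolate Hilbertness of $X$ as the essential hypothesis.

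One bookkeeping point you should resolve rather than hedge. With the paper's displayed formula $P_N f = N^{-1}\chi(N^{-1}t)*f$, the multiplier is $\widehat\chi(N\lambda)$, which localizes to $|\lambda|\sim N^{-1}$, and your computation correctly gives $K(\lambda) = \sum_N N^{2s}|\widehat\chi(N\lambda)|^2 \sim |\lambda|^{-2s}$; that identifies $\dot B^s X$ with $\{\,f : |\lambda|^{-s}\widehat f \in L^2_\lambda X\,\}$, i.e.\ with $\dot H^{-s}$, not with the $\dot H^s$ of the lemma's statement. The lemma, the prose description ``Paley--Wiener projections on the dyadic frequencies $N$,'' and the characterization in the preceding Besov lemma are all consistent with $P_N$ localizing to $|\lambda|\sim N$ (kernel $N\chi(Nt)$), so the displayed $P_N$ formula is the outlier. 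Saying only ``with the index matched to the Paley--Wiener scale convention'' leaves the sign unresolved; better to state explicitly that under the frequency-$N$ reading one gets $K(\lambda)\sim|\lambda|^{2s}$ and hence $\dot B^s X = \dot H^s X$ as claimed.
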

\begin{proof} Since $\langle f, g \rangle = \langle \widehat f, \widehat g \rangle$, setting $f=g$ we obtain Plancherel's identity (only possible when $X$ is a Hilbert space).

For the second part, use Plancherel's identity for $P_N f$, for each $N$.
\end{proof}

We allow the potential to move along a trajectory of class $\dot H^{1/2} \cap C$ or, more generally, $\CC = (\dot H^{1/2} \cap C) + \BV$.


Functions in $\CC$ have left limits at each point, are right-continuous, possess half a derivative, and are bounded: $\CC \subset L^{\infty}$.

The space of trajectories $\CC$ must have the following properties:
$$
\|\chi_{[0, \infty)}(t) f(t)\|_\CC \les \|f\|_\CC;\ \Big\|\int_{-\infty}^t f'(s) g(s) \dd s\Big\|_{\CC_t} \les \|f\|_\CC \|g\|_\CC.
$$
Ours is a minimal choice in this regard. The former property makes it possible to have $L^2$  initial data and $L^1_t L^2_x$ source terms in the equation; the latter caps the interactions, due to $\gamma$, between bound and dispersive states.

The derivatives of functions in $\CC$ are in $\CC'= \dot H^{-1/2} \cap \partial_t C + \mc M$, where $\partial_t C$ is a space of distributions. The dual of $\CC$ is $\CC^*= (\dot H^{-1/2} + C^*) \cap \partial_t \mc M^*$; thus, $\CC' \subset \CC^*$.

For any closed interval or half-line $I \subset \R$ let $\dot H^{1/2}(I) \cap C(I)$ be the space of restrictions to $I$ of functions in $\dot H^{1/2} \cap C$. It is relatively straightforward to prove that several alternative definitions give rise to the same space.

We denote the space defined in the same manner as $\CC$ on any interval $I$ by $\CC(I) := (\dot H^{1/2}(I) \cap C(I)) + \BV(I)$. Subsequent statements are meant to be equally valid in this setting, with a constant independent of the size of the interval $I$.

Finally, in addition to the scalar space $\CC$, we also consider functions of the form $(\dot H^{1/2}_t X_1 \cap C_t X_2) + \BV_t X_2$, where $X_2 \subset X_1$ are Hilbert spaces.


The properties of $\CC$ are collected in the following lemma:
\begin{lemma}\lb{lemma24} The following statements are true in $\Gamma$ and in $\Gamma(I)$:
\begin{align*}
\|\chi_{[t_0, \infty)}(t) f(t)\|_\CC &\les \|f\|_\CC &\|\chi_{[t_0, \infty)}(t) f(t)\|_{\CC'} &\les \|f\|_{\CC'}, \\
\|f g\|_\CC &\les \|f\|_\CC \|g\|_\CC &\|f g\|_{\CC'} &\les \|f\|_\CC \|g\|_{\CC'}, \\
\Big\|\int_{-\infty}^t f(s) \dd s \Big\|_{\CC_t} &\les \|f\|_{\CC'}, & \\
\|e^{if(t)}-1\|_{\dot H^{1/2} \cap C} &\les \|f\|_{\dot H^{1/2} \cap C} & \|e^{if(t)}\|_{\CC} &\les 1+ \|f\|_{\CC}^2, \\
\|e^{if(t) M} - I\|_{\dot H^{1/2} \cap C} &\les_M \|f\|_{\dot H^{1/2} \cap C} & \|e^{if(t) M}\|_{\CC} &\les_M 1 + \|f\|_{\CC}^2, \\
\|e^{\gamma(t)\dl}f\|_{\dot H^{1/2}_t X} &\les \|\gamma\|_{\dot H^{1/2}} \|\dl f\|_X & \|e^{\gamma(t)\dl}f\|_{\CC_t X} &\les (1+\|\gamma\|_{\CC_t}^2) \|\dl^2 f\|_X;
\end{align*}
furthermore,
$$
\|\chi_{[0, \infty)}(t) f(t)\|_{(\dot H^{1/2}_t X_1 \cap C_t X_2) + \BV_t X_2} \les \|f\|_{(\dot H^{1/2}_t X_1 \cap C_t X_2) + \BV_t X_2},
$$
where $M$ is a selfadjoint matrix, $X$ is a translation-invariant Banach space and $X_2 \subset X_1$ are Hilbert spaces. More generally,
$$
\|e^{D(t) \dl} F(t)\|_{X} \les \|D\|_{\dot H^{1/2}} \|\dl F\|_{L^{\infty}_t X} + \|F\|_{\dot H^{1/2}_t X}.
$$
and if $X$ is a Banach lattice
$$
\|e^{iv(t) x} F(t)\|_{\dot H^{1/2} X} \les \|v\|_{\dot H^{1/2}} \|x F\|_{L^{\infty}_t X} + \|F\|_{\dot H^{1/2}_t X}.
$$
Finally, let $e^{\beta (x\dl + 3/2)}$ be the unitary dilation operator
$$
e^{\beta (x\dl + 3/2)} f(x) = e^{3/2 \beta} f(e^{\beta} x).
$$
Then
$$
\|e^{\beta (x\dl + 3/2)} F\|_{\dot H^{1/2}_t L^2_x} \les \|\beta\|_{\dot H^{1/2}} (\|x \dl F\|_{L^{\infty}_t L^2_x} + \|F\|_{L^{\infty}_t L^2_x}) + \|F\|_{\dot H^{1/2}_t L^2_x}.
$$
\end{lemma}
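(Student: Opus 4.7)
The plan is to base every estimate on the Gagliardo-type characterization
\[
\|f\|_{\dot H^{1/2}}^2 \sim \int\!\!\int \frac{|f(t_1)-f(t_2)|^2}{|t_1-t_2|^2}\,dt_1\,dt_2
\]
supplied by the remark after Lemma \ref{besov}, combined with the embedding $\dot H^{1/2} \cap C \hookrightarrow L^\infty$ and the classical Leibniz rule for $\BV$. For every statement I would decompose $f = f_1 + f_2 \in (\dot H^{1/2}\cap C) + \BV$, verify the claim on each component, and control cross-terms using the algebra property itself or Hardy's inequality. The $\CC'$ inequalities then come for free by duality once the primal versions in $\CC$ are established.

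For the \emph{truncation} bound the $\BV$ piece is immediate, and the continuous piece I would handle by isolating the jump produced at $t_0$,
\[
\chi_{[t_0,\infty)}(t)\,f_1(t) = f_1(t_0)\,\chi_{[t_0,\infty)}(t) + \chi_{[t_0,\infty)}(t)\bigl(f_1(t)-f_1(t_0)\bigr),
\]
putting the first summand in $\BV$ with norm $\les\|f_1\|_{L^\infty}$, and noting that the second vanishes at $t_0$ so the contribution of straddling pairs $(t_1,t_2)$ to the Gagliardo seminorm is controlled by Hardy's inequality. The \emph{Banach algebra} bound comes from expanding $(fg)(t_1)-(fg)(t_2) = f(t_1)(g(t_1)-g(t_2))+g(t_2)(f(t_1)-f(t_2))$ in the double integral and factoring out $L^\infty$ norms, and the \emph{module} inequality is its dual. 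The \emph{antiderivative} estimate reduces to multiplication by $|\lambda|^{-1}$ via Plancherel on the $\dot H^{-1/2}\to\dot H^{1/2}$ side and direct inspection on the $\mc M\to\BV$ side. For $\|e^{if}-1\|_{\dot H^{1/2}\cap C}$ the pointwise Lipschitz bound $|e^{is}-e^{it}|\leq|s-t|$ transports the norm directly; the quadratic $\CC$-bound I would obtain by decomposing $f=f_1+f_2$, writing $e^{if}=e^{if_1}e^{if_2}$, and applying the linear bound to each factor together with the algebra property. The selfadjoint matrix version is identical with an $M$-dependent Lipschitz constant.

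The three semigroup estimates all rest on the telescoping identity
\[
e^{\gamma(t_1)\dl}F(t_1) - e^{\gamma(t_2)\dl}F(t_2) = \bigl(e^{\gamma(t_1)\dl}-e^{\gamma(t_2)\dl}\bigr)F(t_1) + e^{\gamma(t_2)\dl}\bigl(F(t_1)-F(t_2)\bigr),
\]
combined with the fact that $e^{s\dl}$ is an isometry on the translation-invariant $X$ and the fundamental theorem $\|(e^{\gamma(t_1)\dl}-e^{\gamma(t_2)\dl})G\|_X \leq |\gamma(t_1)-\gamma(t_2)|\,\|\dl G\|_X$. Inserting these bounds into the Gagliardo double integral and peeling off $\|\dl F\|_{L^\infty_t X}$ produces exactly $\|\gamma\|_{\dot H^{1/2}}\,\|\dl F\|_X$. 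The boost estimate is parallel once the Banach lattice property absorbs $|x|$ onto $F$ via $|e^{iv(t_1)x}-e^{iv(t_2)x}|\leq|v(t_1)-v(t_2)||x|$, and the $L^2$ dilation estimate uses the identity $e^{\beta_1(x\dl+3/2)}-e^{\beta_2(x\dl+3/2)} = \int_{\beta_2}^{\beta_1} e^{s(x\dl+3/2)}(x\dl+3/2)\,ds$ together with the unitarity of $e^{s(x\dl+3/2)}$ on $L^2$, which converts the pointwise gap into $|\beta(t_1)-\beta(t_2)|(\|x\dl F\|_{L^2}+\|F\|_{L^2})$.

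The hard part will be promoting the translation estimate from $\dot H^{1/2}$ to the full $\CC$ norm, where a $\BV$ component of $\gamma$ introduces time-jumps into $e^{\gamma\dl}F$ that cannot be absorbed by one derivative of $F$. Here I would Taylor-expand to second order,
\[
e^{\gamma\dl}F = F + (\gamma\dl)F + (\gamma\dl)^2 R,
\]
with an integral remainder $R$ controlled in $L^\infty$ by $\dl^2 F$, apply the algebra inequality $\|\gamma^2\|_\CC \les \|\gamma\|_\CC^2$, and bound the middle term by combining the algebra property with the already-established linear bound on $e^{\gamma\dl}$ applied to $\dl F$. This accounts both for the quadratic dependence on $\|\gamma\|_\CC$ and for the appearance of $\|\dl^2 F\|_X$ in place of $\|\dl F\|_X$; an analogous second-order expansion handles the $\CC$-valued dilation statement.
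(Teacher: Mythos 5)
Your telescoping-identity strategy for the three semigroup estimates is exactly what the paper does, and your Hardy-inequality treatment of the truncation and the Plancherel argument for the antiderivative also line up with the paper's approach. But two of your steps have real gaps.

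\textbf{The algebra bound.} Expanding $(fg)(t_1)-(fg)(t_2)=f(t_1)(g(t_1)-g(t_2))+g(t_2)(f(t_1)-f(t_2))$ in the Gagliardo double integral works only when both factors lie in $\dot H^{1/2}\cap C$. For the genuine cross term $f_1 g_2$ with $f_1\in\dot H^{1/2}\cap C$ and $g_2\in\BV$, the integral $\int\!\!\int |g_2(t_1)-g_2(t_2)|^2/|t_1-t_2|^2$ diverges as soon as $g_2$ has a jump, so this expansion does not control $\|f_1 g_2\|_{\dot H^{1/2}}$ --- and indeed $f_1 g_2$ is generally \emph{not} in $\dot H^{1/2}\cap C$, only in $\CC$. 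Saying you will ``control cross-terms using the algebra property itself'' is circular. The paper sidesteps this with the Stieltjes representation
\[
(f_1 g_2)(t) = g_2(-\infty)\,f_1(t) + \int_\R (\chi_{[\tau,\infty)} f_1)(t)\,\dd g_2'(\tau),
\]
which exhibits $f_1 g_2$ as a superposition over $\tau$ of truncated copies of $f_1$ (each controlled in $\CC$ by the truncation estimate you already proved), integrated against the finite measure $g_2'$; Minkowski's inequality then gives $\|f_1 g_2\|_{\CC}\les(\|g_2'\|_{\mc M}+\|g_2\|_\infty)\|f_1\|_{\dot H^{1/2}\cap C}$. You need this idea or an equivalent; the Gagliardo expansion alone cannot supply it. (Your multiplicative decomposition $e^{if}=e^{if_1}e^{if_2}$ is fine, but only \emph{after} the algebra property is in place, so it too rests on this missing step.)

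\textbf{The $\CC$-valued translation bound $\|e^{\gamma(t)\dl}f\|_{\CC_t X}\les(1+\|\gamma\|_\CC^2)\|\dl^2 f\|_X$.} The second-order Taylor expansion $e^{\gamma\dl}f=f+(\gamma\dl)f+(\gamma\dl)^2 R(t)$ with $R(t)=\int_0^1(1-s)\,e^{s\gamma(t)\dl}\dl^2 f\,ds$ gives $\|R(t)\|_X\les\|\dl^2 f\|_X$ only in $L^\infty_t$. To apply the algebra inequality to $\gamma(t)^2 R(t)$ you would need $R\in\CC_t X$, but $R$ still contains $e^{s\gamma(t)\dl}$ acting on $\dl^2 f$; bounding its $\dot H^{1/2}_t X$ seminorm by your own linear estimate costs a third derivative, $\|\dl^3 f\|_X$, which the conclusion does not have. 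The paper avoids the Taylor remainder entirely by subtracting off the $\BV$ part explicitly: with $\gamma=\gamma_1+\gamma_2$, it shows that
\[
e^{\gamma(t)\dl}f-\int_0^t \gamma_2'(s)\,\dl\,e^{\gamma(s)\dl}f\,\dd s
\]
has increments controlled by $(1+\|\gamma_2\|_{\BV})\sup_{[t_1,t_2]}|\gamma_1(t)-\gamma_1(t_1)|\,\|\dl^2 f\|_X$, and then invokes the dyadic characterization of Lemma \ref{besov}. This splits the function directly into a $\dot H^{1/2}\cap C$ piece and a $\BV$ piece, needing only two derivatives of $f$, and is the argument you should use rather than a Taylor expansion.
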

\begin{observation}
More generally, let $S(t)$ be a $\dot H^{1/2}$ family of isometries on $X$, of infinitesimal generator $s$, in the sense that $S(t)=\exp(\pi(t)s)$ is defined by $\partial_t S(t) = \dot \pi(t) s S(t)$ and $\pi(t) \in \dot H^{1/2}$. Then
$$
\|S(t) F(t)\|_{\dot H^{1/2} X} \les \|\pi\|_{\dot H^{1/2}} \|s F\|_{L^{\infty}_t X} + \|F\|_{\dot H^{1/2} X}.
$$
However, we shall not need this degree of generality in the sequel.
\end{observation}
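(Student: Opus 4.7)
The plan is to reduce the claim to a modulus-of-continuity estimate via Lemma \ref{besov} (applied to the Banach space $X$) and then to split the increment of $SF$ into two contributions, one where the isometry moves and one where $F$ moves. Concretely, for any $h \in \R$ write
\[
S(t+h)F(t+h) - S(t)F(t) = S(t+h)\bigl[F(t+h)-F(t)\bigr] + \bigl[S(t+h)-S(t)\bigr]F(t).
\]
Since each $S(\tau)$ is an $X$-isometry, the first term has $X$-norm equal to $\|F(t+h)-F(t)\|_X$, which is exactly what enters the Besov characterization of $\|F\|_{\dot H^{1/2}_t X}$.

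For the second term I would exploit that $S(t)=\exp(\pi(t)s)$ is an exponential family, so $s$ commutes with every $S(\tau)$ and
\[
S(t+h)-S(t) = S(t)\bigl[e^{(\pi(t+h)-\pi(t))s}-I\bigr]
= S(t)(\pi(t+h)-\pi(t))\int_0^1 e^{\theta(\pi(t+h)-\pi(t))s}\dd\theta\,\cdot s.
\]
Because $S(t)$ and each $e^{\theta(\pi(t+h)-\pi(t))s}$ is an isometry on $X$, this yields the pointwise bound
\[
\bigl\|[S(t+h)-S(t)]F(t)\bigr\|_X \le |\pi(t+h)-\pi(t)|\,\|sF(t)\|_X \le |\pi(t+h)-\pi(t)|\,\|sF\|_{L^\infty_t X}.
\]
Combining and taking $L^2_t$ norm of the supremum over $|h|\le 2^{-k}$,
\[
\sup_{|h|\le 2^{-k}}\|SF(\cdot+h)-SF(\cdot)\|_{L^2_t X}
\les \sup_{|h|\le 2^{-k}}\|F(\cdot+h)-F(\cdot)\|_{L^2_t X}
+ \|sF\|_{L^\infty_t X}\sup_{|h|\le 2^{-k}}\|\pi(\cdot+h)-\pi(\cdot)\|_{L^2_t}.
\]
Multiplying by $2^{k/2}$ and taking the $\ell^2_k$ norm, Lemma \ref{besov} (with $s=1/2$, applied to $X$ for the first piece and to $\R$ for the second) gives
\[
\|SF\|_{\dot H^{1/2}_t X}\les \|F\|_{\dot H^{1/2}_t X}+\|\pi\|_{\dot H^{1/2}}\|sF\|_{L^\infty_t X},
\]
which is the desired inequality.

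The main obstacle I expect is justifying the increment formula for $S(t+h)-S(t)$ when $\pi$ is merely in $\dot H^{1/2}$ and hence need not be absolutely continuous. I would handle this by regularizing $\pi$ via the Paley--Wiener projections $P_N\pi$ used in Lemma \ref{besov}, for which $S_N(t):=\exp((P_N\pi)(t)s)$ is smooth in $t$, the commutation $[s,S_N(t)]=0$ and the Duhamel identity are literal, and the isometry property is preserved. The bounds above survive the passage $N\to\infty$ because the right-hand side depends on $\pi$ only through $\pi(t+h)-\pi(t)$, which converges in $L^2_t$ by the proof of Lemma \ref{besov}, while $S_N(t)F(t)\to S(t)F(t)$ in $X$ pointwise in $t$ by strong continuity of the exponential; a standard lower-semicontinuity/Fatou argument then transfers the inequality to the limit.
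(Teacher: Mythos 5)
Your proof is correct and follows essentially the same route the paper uses for the concrete cases of Lemma \ref{lemma24} (translation, boost, dilation): split the increment of $S(t)F(t)$ into a piece where $F$ moves, handled by isometry, and a piece where the group parameter moves, bounded by $|\pi(t+h)-\pi(t)|\,\|sF\|_{L^\infty_t X}$, then invoke the Besov characterization of Lemma \ref{besov}. Your final regularization step is not actually needed, since $S(t+h)-S(t)=S(t)\bigl[\exp((\pi(t+h)-\pi(t))s)-I\bigr]$ is just the one-parameter group law in the parameter and holds pointwise in $t$ with no smoothness of $\pi$ required.
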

\begin{proof}
The classes $\CC$ and (by duality) $\CC^*$ are preserved by multiplication with the cutoff functions $\chi_{[t_0, \infty)}$, $t_0 \in \R$: this is the case for $\BV$ and also
$$
\|\chi_{[t_0, \infty)} f\|_{(\dot H^{1/2} \cap C) + \BV} \les \|f\|_{\dot H^{1/2} \cap C}.
$$
We prove this statement as follows: for simplicity, take $t_0=0$.
Let $\chi_1$ be a smooth function such that $\chi_1(t)=1$ for $t \geq 2$ and $\chi_1(t) = 0$ for $t \leq 1$ and let $\chi_2(t) = 1-\chi_1(t)-\chi_1(-t)$. For $f \in \dot H^{1/2} \cap C$, write 
$$\begin{aligned}
\chi_{[0, \infty)}(t) f(t)&=f_1(t)+f_2(t),\\
f_1(t) &= \chi_1(t) f(t) + \chi_{[0, \infty)}(t) \chi_2(t) (f(t) -  f(0)),\\
f_2(t)&=\chi_{[0, \infty)}(t) \chi_2(t) f(0),
\end{aligned}$$
where $f_1$ is continuous and $f_2$ is a step function. Clearly $f_2 \in \BV$ and
$$
\|\chi_1(t) f(t)\|_{H^{1/2}_t} + \|\chi_2(t) f(t)\|_{H^{1/2}_t} \les \|f\|_{H^{1/2}}, \|\chi_2(t) f(0)\|_{H^{1/2}_t} \les \|f\|_C.
$$
By Lemma \ref{besov}, since
$$
\|\chi_{[0, \infty)}(t) g(t)\|_{\dot H^{1/2}} \les \big\| 2^{k/2} \sup_{|h| \leq 2^{-k}} \|\chi_{[0, \infty)}(t+h) g(t+h)-\chi_{[0, \infty)}(t) g(t)\|_{L^2_t X} \big\|_{\ell^2_k}
$$
and
$$
\|\chi_{[0, \infty)}(t+h) g(t+h)-\chi_{[0, \infty)}(t) g(t)\|_{L^2_t} \les \|g(t+h)-g(t)\|_{L^2_t} + \|\chi_{[0, |h|]}(t) g(t)\|_{L^2_t},
$$
it follows that
$$
\|\chi_{[0, \infty)}(t) g(t)\|_{\dot H^{1/2}} \les \|g\|_{\dot H^{1/2}} + \Big(\int_{\R} \frac{|g(t)|^2}{|t|} \dd t\Big)^{1/2}.
$$
The next step is proving that
\be\lb{h12}
\Big(\int_{\R} \frac{|\chi_2(t) (f(t)-f(0))|^2}{|t|} \dd t\Big)^{1/2} \les \|\chi_2(t) (f(t)-f(0))\|_{H^{1/2}}.
\ee
For $g\in H^1_0((-\infty, 0) \cup (0, \infty))$, interpolating between $\int_{\R} |g(t)|^2 \dd t = \|g\|_2^2$~and%
$$
\int_{\R} \frac{|g(t)|^2}{|t|^2} \dd t \les \|g\|_{H^1}^2,
$$
(see \cite{taobook}, the Agmon division lemma) we obtain
$$
\int_{\R} \frac{|g(t)|^2}{|t|} \dd t \les \|g\|_{H^{1/2}}^2.
$$
Let $g \in \mc S$ have $g(0)=0$; then
$$
\int_{\R} \frac{|g(t)|^2}{|t|} \dd t \les \|g\|_{H^{1/2}}^2.
$$
In particular, this is the case for $\chi_2(t) (f(t)-f(0))$, for any $f \in C^{\infty}$. Then
$$
\int_{\R} \frac{|\chi_2(t) (f(t)-f(0))|^2}{|t|} \dd t \les \|\chi_2(t) (f(t)-f(0))\|_{H^{1/2}}^2.
$$
This proves (\ref{h12}); since $\|\chi_2(t) f(t)\|_{H^{1/2}} \les \|f\|_{H^{1/2}}$ and $\|\chi_2(t) f(0)\|_{H^{1/2}} \les \|f\|_C$, we obtain
$$
\|\chi_{[0, \infty)} f\|_{(\dot H^{1/2} \cap C) + \BV} \les \|f\|_{H^{1/2} \cap C}.
$$
Rescaling $f$ so that the $L^2$ norm goes to zero and the $\dot H^{1/2} \cap C$ norm is constant, we can replace $H^{1/2}$ by $\dot H^{1/2}$. Approximating any $f\in \dot H^{1/2} \cap C$ by $C^{\infty}$ functions, the conclusion follows.

The same considerations apply in the more general setting of Hilbert spaces $X_1$ and $X_2$, where the Plancherel identity is valid.

This proves that $\CC$ is an algebra. Indeed, both $\dot H^{1/2} \cap C$ and $\BV$ are algebras, when considered separately. Concerning a product between $f \in \dot H^{1/2} \cap C$ and $g \in \BV$, we proceed as follows: write
$$
(f g)(t) = g(-\infty) f(t) + \int_\R (\chi_{[\tau, \infty)} f)(t) \dd g'(\tau).
$$
Then, by Minkowski's inequality,
$$\begin{aligned}\lb{2.6}
\|fg\|_{\CC} &\leq \|g\|_{L^{\infty}_t} \|f\|_{\dot H^{1/2} \cap C} + \|g'\|_{\mc M_t} \sup_t \|\chi_{[t, \infty)} f\|_{\CC} \\
&\les (\|g'\|_{\mc M_t} + \|g\|_{L^{\infty}}) \|f\|_{\dot H^{1/2} \cap C}.
\end{aligned}$$
Hence $\CC$ is an algebra, which also implies, by duality, that $\|fg\|_{\CC^*} \les \|f\|_{\CC} \|g\|_{\CC^*}$.

Assume that $g \in \CC'$; then $g = g_1 + g_2$ with $g_1 \in \dot H^{-1/2} \cap \partial_t C_t$ and $g_2 \in \mc M$. Then $\chi_{[0, \infty)}(t) g_2(t) \in \mc M$ and $\chi_{[0, \infty)}(t) g_1(t) \in \CC^*$. If $G$ is a continuous antiderivative of $g_1$, then
$$
\tilde G(t) = \chi_{[0, \infty)}(t) (G(t)-G(0))
$$
is a continuous antiderivative for $\chi_{[0, \infty)} g_1$ --- which shows that $\chi_{[0, \infty)} g_1 \in \CC'$.

More generally, take $f \in \CC$ and $g \in \CC'$, $g=g_1 + g_2$, $g_1 \in \dot H^{-1/2} \cap \partial_t C_t$ and $g_2 \in \mc M$. Then $\|f g_2\|_{\mc M} \les \|f\|_{C + \BV} \|g_2\|_{\mc M}$. Let $f=f_1+f_2$, where $f_1 \in \dot H^{1/2} \cap C$ and $f_2 \in \BV$. We write
$$
f_2 g_1 = f_2(-\infty) g_1(t) + \int_\R (\chi_{[\tau, \infty)} g_1)(t) \dd f_2'(\tau),
$$
implying that $f_2 g_1 \in \Gamma$. Finally, approximating $f_1$ and $g_1$ by Schwartz functions, we obtain that $f_1 g_1$ is in the $\Gamma^*$-closure of the Schwartz space, hence in $(\dot H^{-1/2} \cap \partial_t C) +  L^1 \subset \Gamma'$.

Next, we show that the antiderivative of a function in $\CC'$ belongs to $\CC$. Decompose $f \in \CC'$ into $f = f_1 + f_2$, where $f_1 \in \dot H^{-1/2} \cap \partial_t C$ and $f_2 \in \mc M$. Then the antiderivative of $f_2$ is in $\BV \subset \CC$ by definition. If $f_1 \in \dot H^{-1/2}$,
$$
\int_t^{\infty} f_1(s) \dd s - \int_{-\infty}^t f_1(s) \dd s \in \dot H^{1/2}_t.
$$
The Fourier transform of this convolution kernel is $1/\xi$. Then
$$
\int_{-\infty}^t f_1(s) \dd s = \frac 1 2 \bigg(\int_{-\infty}^{\infty} f_1(s) \dd s - \Big(\int_t^{\infty} f_1(s) \dd s - \int_{-\infty}^t f_1(s) \dd s\Big)\bigg)
$$
is in $\dot H^{1/2}$, up to the constant term $\int_{-\infty}^{\infty} f_1(s) \dd s$, since $f_1 \in \partial_t C_t$. We put this constant in $\BV$. By definition, $f_1$'s antiderivative is in $C$ as well, hence in $\dot H^{1/2} \cap C \subset \CC$.

Since $\CC$ is an algebra, $\|e^{if}\|_\CC \leq e^{\|f\|_\CC}$; however, a better estimate is available when $f$ is real-valued: following Lemma \ref{besov}, since $|e^{ia} - e^{ib}| \leq |a-b|$,
$$
\|e^{if} - 1\|_{\dot H^{1/2} \cap C} \les \|f\|_{\dot H^{1/2} \cap C}.
$$
More generally, let $f\in \Gamma = f_1 + f_2$, $f_1 \in \dot H^{1/2} \cap C$, $f_2 \in \BV$. Then
$$\begin{aligned}
&e^{i(f_1(t_2) + f_2(t_2))} - e^{i(f_1(t_1) + f_2(t_1))} - \int_{t_1}^{t_2} i f_2'(t) e^{i(f_1(t) + f_2(t))} \dd t = \\
&= \int_{t_1}^{t_2} i f_2'(t) e^{if_2(t)} (e^{if_1(t_1))}-e^{if_1(t)}) \dd t + e^{if_2(t_2)} (e^{if_1(t_2)}-e^{if_1(t_1)}) \\
&\les (1+\|f_2\|_{\BV_t}) \sup_{t \in [t_1, t_2]} |f_1(t) - f_1(t_1)|.
\end{aligned}$$
By Lemma \ref{besov}, it follows that
$$
\Big\|e^{i(f_1(t) + f_2(t))} - \int_0^t i f_2'(\tau) e^{i(f_1(\tau) + f_2(\tau))} \dd \tau\Big\|_{\dot H^{1/2}_t} \les (1+\|f_2\|_{\BV}) \|f_1\|_{\dot H^{1/2}},
$$
so $\|e^{if(t)}\|_{\CC_t} \les 1+\|f\|_{\CC_t}^2$.

When $M$ is a selfadjoint matrix, we first bring it to a diagonal form, $M = CDC^{-1}$; then $e^{if(t)M} = C e^{if(t) D} C^{-1}$ and the conclusion holds for each individual eigenvalue.

Likewise, note that
$$
\|e^{D(t+h) \dl} f - e^{D(t) \dl} f\|_{X} \leq |D(t+h) - D(t)| \|\dl f\|_X;
$$
then the characterization of $\dot H^{1/2}$ given in Lemma \ref{besov} establishes that
$$
\|e^{\gamma(t) \dl} f\|_{\dot H^{1/2} X} \les \|\gamma\|_{\dot H^{1/2}_t} \|\dl f\|_X.
$$
More generally, for a time-dependent function $F(t)$,
$$
\|e^{D(t+h) \dl} F(t+h) - e^{D(t) \dl} F(t)\|_{X} \les |D(t+h) - D(t)| \|\dl F\|_{L^{\infty}_t X} + \|F(t+h)-F(t)\|_X,
$$
which shows that
$$
\|e^{D(t) \dl} F(t)\|_{X} \les \|D\|_{\dot H^{1/2}} \|\dl F\|_{L^{\infty}_t X} + \|F\|_{\dot H^{1/2}_t X}.
$$
Assuming that $X$ is a Banach lattice, we also obtain
$$
\|e^{v(t+h) x} F(t+h) - e^{v(t) x} F(t)\|_{X} \leq |v(t+h) - v(t)| \|x F\|_{L^{\infty}_t X}  + \|F(t+h)-F(t)\|_X,
$$
implying that
$$
\|e^{v(t) x} F(t)\|_{X} \les \|v\|_{\dot H^{1/2}} \|x F\|_{L^{\infty}_t X} + \|F\|_{\dot H^{1/2}_t X}.
$$
The statement about dilation is proved in the same manner.

Finally, consider $F(t) = e^{\gamma(t)\dl}f$, $\gamma \in \CC = \gamma_1 + \gamma_2$, $\gamma_1 \in \dot H^{1/2} \cap C$, $\gamma_2 \in \BV$. Since
$$\begin{aligned}
&\Big\|e^{\gamma(t_2)\dl}f - e^{\gamma(t_1)\dl} f - \int_{t_1}^{t_2} i \gamma_2'(t)\dl e^{\gamma(t)\dl} f \dd t\Big\|_X \les \\
&\les (1+\|\gamma_2\|_{\BV}) \sup_{t \in [t_1, t_2]} |\gamma_1(t) - \gamma_1(t_1)| \|\dl^2 f\|_X,
\end{aligned}$$
it follows that $\|e^{i\gamma(t)\dl}f\|_{\CC_t X} \les (1+\|\gamma\|_{\CC_t}^2) \|\dl^2 f\|_X$.

To retrieve the same results on any interval $I$, it suffices to note that $\CC(I)$ is the restriction to $I$ of $\CC$.
\end{proof}

\section{Proof of the main statements}\lb{sect_3}
Let $V = V_1 V_2$, where $V_1 = |V|^{1/2}$ and $V_2 = |V|^{1/2} \sgn V$.
For a curve $\gamma:[0, \infty) \to \R^3$, a variable scale $\beta:[0, \infty) \to \R$, a velocity $v:[0, \infty) \to \R^3$, and a phase $\alpha:[0, \infty) \to \R$, let $\pi$ be the parameter path $\pi = (\gamma, v, \beta, \alpha)$ and define the isometry
$$
S_{\pi}(t) = e^{i\alpha(t)} e^{\beta(t)(x\dl + 3/2)} e^{iv(t) x} e^{\gamma(t) \dl}
$$
and the operator $T(\pi)$ on $L^2_{t, x}$ by
\be
(T(\pi) F)(t) = \int_0^t V_2 S_{\pi}(t) e^{i(t-s)\Delta} S_{\pi}(s)^{-1} V_1 F(s) \dd s.
\ee
Here $e^{\beta (x\dl + 3/2)}$ is the $L^2$-unitary dilation operator
$$
e^{\beta (x\dl + 3/2)} f(x) := e^{3\beta/2} f(e^{\beta} x);
$$
$e^{\gamma \dl}$ represents the translation $e^{\gamma \dl} f(x) = f(x+\gamma)$.

First, we show that $T(\pi)$ is a continuous mapping 
to $\B(L^2_{t, x}, L^2_{t, x})$ in the norm topology. The proper norm to consider for $\pi$ is $\|\pi\|_{\Pi} = \|\alpha\|_{L^{\infty}+\dot W^{1, \infty}} + \|\gamma\|_{L^{\infty}+\dot W^{1, \infty}} + \|v\|_{L^{\infty}} + \|\beta\|_{L^{\infty}}$.


\begin{lemma}\lb{lemma2.1} Assume $V \in \langle x \rangle^{-4} L^1 \cap L^{\infty}$. Then
$$
\|T(\pi) - T(\pi_0)\|_{\B(L^2_{t, x}, L^2_{t, x})} \les \|V\|_{\langle x \rangle^{-4} L^1 \cap L^{\infty}} \|\pi-\pi_0\|_{\Pi}^{2/5}.
$$
More generally, if $V \in L^{3/2, \infty}_0$ (the weak-$L^{3/2}$ closure of the set of bounded, compactly supported functions), then for fixed $\gamma_0$
$$
\lim_{\|\pi - \pi_0\|_{\Pi}\to 0} \|T(\pi) - T(\pi_0)\|_{\B(L^2_{t, x}, L^2_{t, x})} = 0.
$$
\end{lemma}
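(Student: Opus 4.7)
The plan is to prove the lemma in three steps: (i) a uniform $L^2_{t,x}\to L^2_{t,x}$ bound for $T(\pi)$ with constant depending only on $V$; (ii) a quantitative Lipschitz estimate in $\pi$ when $V\in\langle x\rangle^{-4}L^1\cap L^\infty$; (iii) an interpolation producing the H\"older exponent $2/5$, followed by a density argument for the qualitative statement on $L^{3/2,\infty}_0$.

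For (i), factor $W_\pi(t,s):=S_\pi(t)e^{i(t-s)\Delta}S_\pi(s)^{-1}$ as $\bigl(S_\pi(t)S_\pi(s)^{-1}\bigr)\cdot\bigl(S_\pi(s)e^{i(t-s)\Delta}S_\pi(s)^{-1}\bigr)$. Since $e^{i\alpha}$ and $e^{\gamma\dl}$ commute with $-\Delta$, while $e^{ivx}$ and $e^{\beta(x\dl+3/2)}$ conjugate $-\Delta$ into $-(\dl+iv)^2$ and $e^{-2\beta}(-\Delta)$ respectively, the inner operator is itself a free Schr\"odinger evolution (with momentum shift $v(s)$, rescaled time-step, and an overall phase), hence has an explicit Gaussian kernel parametrized by $\pi(s)$. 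Composing with the outer isometries produces an explicit oscillatory kernel for $W_\pi(t,s)$ whose parameters are continuous in $(\pi(t),\pi(s))$. The Kato smoothing inequality $\|V_1 e^{i\tau\Delta}f\|_{L^2_\tau L^2_x}\les \|V\|_{L^{3/2}}^{1/2}\|f\|_{L^2_x}$, combined with the $L^2$-unitarity of $S_\pi$ (which can be discarded after taking norms), yields the uniform bound $\|T(\pi)\|_{\B(L^2_{t,x},L^2_{t,x})}\les \|V\|_{L^{3/2}}$.

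For (ii), differentiate the kernel of $W_{\pi_\theta}$, $\pi_\theta:=\theta\pi+(1-\theta)\pi_0$, in $\theta$; each of the four components of $\pi$ contributes a polynomial weight in $x$, $y$, or $(x-y)$ multiplying the Gaussian. Sandwiching with $V_1$ and $V_2$ and using $\int\langle x\rangle^4 |V|\dd x<\infty$ absorbs these weights, producing a Lipschitz bound $\|T(\pi)-T(\pi_0)\|_{\B(L^2_{t,x},L^2_{t,x})}\les C(V)\,\|\pi-\pi_0\|_\Pi$ on bounded subsets of $\Pi$ (first reducing to that case by factoring out the global Galilean and dilation isometries). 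Interpolating this Lipschitz bound against the trivial $O(\|V\|_{L^{3/2}})$ bound from (i), via an optimal cutoff $V=V^{\le R}+V^{>R}$ at scale $R=\|\pi-\pi_0\|_\Pi^{-\sigma}$, then produces the H\"older exponent $2/5$; this exponent is uniquely determined by matching the scaling of $\|V^{>R}\|_{L^{3/2}}$ and of the weighted norm of $V^{\le R}$ against $\|\pi-\pi_0\|_\Pi$.

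Step (iii) is routine density: approximate $V\in L^{3/2,\infty}_0$ in the weak-$L^{3/2}$ norm by bounded, compactly supported $V_n$, use $\|T_V(\pi)-T_V(\pi_0)\|\le 2\|T_{V-V_n}\|+\|T_{V_n}(\pi)-T_{V_n}(\pi_0)\|$ with the uniform bound on the first term and step (ii) on the second, then take $n\to\infty$ followed by $\|\pi-\pi_0\|_\Pi\to 0$. The \emph{main obstacle} is step (ii): differentiating $W_\pi$ in the dilation parameter $\beta$ brings down the unbounded generator $x\dl$, and it is precisely the $\langle x\rangle^{-4}L^1$ decay of $V$ that absorbs the resulting $x$-weights after multiplication by $V_1$ and $V_2$; extracting the exact exponent $2/5$ requires a careful scaling analysis in the interpolation.
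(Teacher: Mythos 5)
Your Step (ii) contains a genuine gap, and as a consequence the mechanism you propose for extracting the exponent $2/5$ does not match reality. The Lipschitz bound $\|T(\pi)-T(\pi_0)\|_{\B(L^2_{t,x},L^2_{t,x})}\les C(V)\,\|\pi-\pi_0\|_\Pi$ (even on bounded subsets of $\Pi$) cannot hold, and no amount of weight from $\langle x\rangle^{-4}L^1$ decay will fix it. The obstruction is in the time integration, not the spatial weights. When you differentiate the Gaussian kernel $(4\pi i\tau)^{-3/2}e^{i|x-y|^2/4\tau}$ in $\pi$ and absorb the polynomial $x,y$-weights into the potential, the residual factor is an inverse power of $\tau=t-s$: the translation and dilation perturbations each cost an extra $\tau^{-1}$, giving a $\tau^{-5/2}$ singularity at $\tau=0$, while the boost and phase perturbations, whose size grows like $\tau\|\pi-\pi_0\|_\Pi$ over a window of length $\tau$, only decay like $\tau^{-1/2}$ at infinity. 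Either end of the Schur integral $\int_0^\infty\sup_{t-s=\tau}\|\cdot\|\,d\tau$ diverges, so there is no global Lipschitz estimate to interpolate with.

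The paper's proof resolves this by a three-way cutoff in $\tau$, not by truncating $V$: for $\tau<\epsilon$ one uses the unperturbed bound $\|V\|_{L^\infty}$, for $\tau>R$ the unperturbed bound $\|V\|_{L^1}\tau^{-3/2}$, and only for $\tau\in[\epsilon,R]$ does one invoke the kernel-difference estimate $\les(\tau^{-5/2}+\tau^{-1/2})\|\pi-\pi_0\|_\Pi$. Summing gives $\epsilon+R^{-1/2}+\|\pi-\pi_0\|_\Pi(\epsilon^{-3/2}+R^{1/2})$; balancing $\epsilon=\|\pi-\pi_0\|_\Pi\epsilon^{-3/2}$ yields $\epsilon=\|\pi-\pi_0\|_\Pi^{2/5}$, and $R=\|\pi-\pi_0\|_\Pi^{-1}$ yields the subdominant $\|\pi-\pi_0\|_\Pi^{1/2}$, so the overall rate is $\|\pi-\pi_0\|_\Pi^{2/5}$. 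Your spatial truncation $V=V^{\le R}+V^{>R}$ at $R=\|\pi-\pi_0\|_\Pi^{-\sigma}$ does not produce this scaling. Your Steps (i) and (iii) are fine (modulo replacing $L^{3/2}$ by $L^{3/2,\infty}$ where appropriate in the Kato smoothing bound), and your identification of where the $\langle x\rangle^{-4}L^1$ decay is used is correct; but the key point you are missing is that the $\epsilon,R$ cutoff must live in the $\tau$-variable, and that the unperturbed propagator bounds supply the near-$0$ and far-field control, not a truncation of $V$.
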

In the latter case, the rate of convergence depends on the profile of $V$ (there is no exact estimate).
\begin{proof}
To begin with, assume $V$ is in $\langle x \rangle^{-4} L^1 \cap L^{\infty}$. Note that $\|T(\pi_0)(t, s)\|_{\B(L^2_x, L^2_x)}$ and $\|T(\pi)(t, s)\|_{\B(L^2_x, L^2_x)}$ are uniformly bounded by $\langle t-s \rangle^{-3/2}$:
$$
\|T(\pi_0)(t, s)\|_{\B(L^2_x, L^2_x)} + \|T(\pi)(t, s)\|_{\B(L^2_x, L^2_x)} \les \|V\|_{L^1 \cap L^\infty} \langle t-s \rangle^{-3/2}.
$$
We estimate $\|T(\gamma) - T(\gamma_0)\|_{\B(L^2_{t, x}, L^2_{t, x})}$ by
$$
\|T(\gamma) - T(\gamma_0)\|_{\B(L^2_{t, x}, L^2_{t, x})} \leq \int_0^{\infty} \sup_{t-s = \tau} \|T(\gamma)(t, s) - T(\gamma_0)(t, s)\|_{\B(L^2_x, L^2_x)} \dd \tau.
$$
In particular,
$$
\|T(\pi)(t, s)\|_{\B(L^2_{t, x}, L^2_{t, x})} \les \|V\|_{L^1 \cap L^{\infty}},
$$
so with no loss of generality we assume that $\|\pi-\pi_0\|_{\Pi} \leq 1$.

If $\delta_0$ is Dirac's measure at zero, then the fundamental solution, respectively Green's function for the free Schr\"{o}dinger equation are
$$
e^{it \Delta} \delta_0(x) = (4 \pi i t)^{-3/2} e^{i\frac{|x|^2}{4t}},\ e^{it \Delta} (x, y) = (4 \pi i t)^{-3/2} e^{i\frac{|x-y|^2}{4t}}.
$$
This leads to the pointwise bounds, for $|\beta|\leq 1$,
$$\begin{aligned}
|(e^{\beta (x\dl + 3/2)} - 1) e^{it \Delta}(x, y)| &\les t^{-3/2} |\beta| + t^{-5/2} |\beta| |x| |x-y|,\\
|(e^{d \dl} -1) e^{it \Delta} \delta_0(x)| &\les t^{-5/2} (d^2 + d |x|).
\end{aligned}$$
Taking into account the other parameters as well, the difference between the perturbed and the unperturbed kernel is of size
$$\begin{aligned}
&|e^{i\alpha} e^{\beta_1 (x\dl + 3/2)}  e^{-iv_1x} e^{d \dl} e^{it \Delta} e^{iv_2y} e^{\beta_2 (x\dl + 3/2)}(x, y) - e^{it\Delta}(x, y)| \les \\
&\les t^{-5/2} (|\beta_1| |x| |x-y| + |\beta_2| |y| |x-y| + |d|^2 + |d| |x-y|) + \\
&+ t^{-3/2} (|v_1| |x| + |v_2||y|+ |\alpha|).
\end{aligned}$$
For each $t$ and $s$, then,
\begin{equation}\lb{bd1}\begin{aligned}
&\|T(\pi)(t, s) - T(\pi_0)(t, s)\|_{\B(L^2_x, L^2_x)} \les |t-s|^{-5/2} \|V\|_{\langle x \rangle^{-4} L^1} \cdot \\
&\big(|(\gamma(t)-\gamma(s)) - (\gamma_0(t)-\gamma_0(s))|^2 + |(\gamma(t)-\gamma(s)) - (\gamma_0(t)-\gamma_0(s))| \big) + \\
&|t-s|^{-3/2} \|V\|_{\langle x \rangle^{-1} L^1} \big(|(v(t)-v_0(t)|+|v(s)-v_0(s)|\big) + \\
&|t-s|^{-3/2} \|V\|_{\infty} |(e^{i\alpha(t)}-\alpha(s))-(\alpha_0(t)-\alpha_0(s))| \\
&\les (|t-s|^{-5/2} + |t-s|^{-1/2}) \|\pi-\pi_0\|_{\Pi}.
\end{aligned}
\ee
Let $0<\epsilon<R<\infty$ and consider three cases for $t-s \in [0, \infty)$: if $t-s<\epsilon$, we use the bound
$$
\|T(\pi)(t, s) - T(\pi_0)(t, s)\|_{\B(L^2_x, L^2_x)} \les \|V\|_{L^\infty}.
$$
If $t-s>R$, we use the bound
$$
\|T(\pi)(t, s) - T(\pi_0)\|_{\B(L^2_x, L^2_x)} \les \|V\|_{L^1} (t-s)^{-3/2},
$$
while if $t-s \in [\epsilon, R]$ we use (\ref{bd1}). 
This leads to
\begin{align*}
&\|T(\pi) - T(\pi_0)\|_{\B(L^2_{t, x}, L^2_{t, x})} \les \\
&\les \|V\|_{\langle x \rangle^{-4} L^1 \cap L^{\infty}} \big(\epsilon + R^{-1/2} + \|\pi - \pi_0\|_{L^{\infty} + \dot W^{1, \infty}} (\epsilon^{-3/2} + R^{1/2})\big).
\end{align*}
By setting $\epsilon=\|\pi-\pi_0\|_{\Pi}^{2/5}$ and $R=\|\pi-\pi_0\|_{\Pi}^{-1}$, we get
$$
\|T(\pi) - T(\pi_0)\|_{\B(L^2_{t, x}, L^2_{t, x})} \les \|V\|_{\langle x \rangle^{-4} L^1 \cap L^{\infty}} \|\pi-\pi_0\|_{\Pi}^{2/5}.
$$

When $V \in L^{3/2, \infty}_0$, we proceed by approximation. Take $V^n = V_1^n V_2^n$ and two sequences $(V_1^n)_n$ and $(V_2^n)_n$, such that $V_1^n \to V_1$, $V_2^n \to V_2$ in the $L^{3, \infty}$ norm, but $V_1^n$ and $V_2^n$ are bounded of compact support. The result is true for each approximation, so it is still true in the limit.
\end{proof}

In case $-\Delta+V$ has bound states, we replace the functions $V_1$ and $V_2$ in (\ref{def_t}) by the operators $\tilde V_1$ and $\tilde V_2$ given by the following technical lemma:
\begin{lemma}\lb{lemma32}
Consider $V \in L^{3/2, \infty}_0(\set R^3)$ and $H = -\Delta + V$ such that $H$ has no embedded eigenvalues or threshold resonances. Then there exists a decomposition
\be
V - P_p (H-i\delta) = \tilde V_1 \tilde V_2,
\ee
where $P_p$ is the point spectrum projection, such that $\tilde V_1$, $\tilde V_2^* \in \B(L^2, L^{6/5, 2})$, and the Fourier transform of $I-iT_{\tilde V_2, \tilde V_1}$ is invertible in the lower half-plane up to the boundary, where
\be
(T_{\tilde V_2, \tilde V_1}F)(t) = \int_{-\infty}^t \tilde V_2 e^{-i(t-s)\Delta} \tilde V_1 F(s) \dd s.
\ee
Furthermore, $\tilde V_1$ and $\tilde V_2^*$ can be approximated in the $\B(L^2, L^{6/5, 2})$ norm by operators that are bounded from $L^2$ to $\langle x \rangle^{-N} L^2$, for any fixed $N$.
\end{lemma}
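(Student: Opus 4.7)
The plan is to shift the bound-state ``masses'' of $H$ off the real axis by a rank-$N$ perturbation of the Birman--Schwinger factorization of $V$. Let $\phi_1,\ldots,\phi_N$ be an $L^2$-orthonormal basis of $P_p L^2$, with $H\phi_j=E_j\phi_j$ and $E_j<0$. I define
$$\tilde V_1\colon L^2\oplus\mathbb C^N\to L^{6/5,2},\quad \tilde V_1(f,c)=V_1 f+\sum_{j=1}^N c_j\phi_j,$$
$$\tilde V_2\colon L^{6,2}\to L^2\oplus\mathbb C^N,\quad \tilde V_2 g=\bigl(V_2 g,\,-(E_j-i\delta)\langle g,\phi_j\rangle\bigr)_{j=1}^N.$$
A direct computation gives $\tilde V_1\tilde V_2=V-P_p(H-i\delta)$ as maps $L^{6,2}\to L^{6/5,2}$. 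Both $\tilde V_1$ and $\tilde V_2^*$ live in $\mathcal L(L^2\oplus\mathbb C^N,L^{6/5,2})$, which is unitarily equivalent to $\mathcal L(L^2,L^{6/5,2})$ since $N<\infty$. The components $V_1,V_2$ lie in $L^{3,\infty}_0$ because $V\in L^{3/2,\infty}_0$, so they yield bounded multipliers $L^2\to L^{6/5,2}$ by Hölder in Lorentz spaces; the Agmon-type exponential decay and smoothness of the $\phi_j$ put the finite-rank terms in any weighted space $\langle x\rangle^{-N}L^2\subset L^{6/5,2}$.

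Next I take the Fourier transform in $t$. A Laplace-type calculation valid for $\mathrm{Im}\,\lambda<0$ yields
$$\widehat{T_{\tilde V_2,\tilde V_1}}(\lambda)=i\,\tilde V_2\,R_0(\lambda)\,\tilde V_1,\qquad R_0(\lambda):=(-\Delta-\lambda)^{-1},$$
so invertibility of $\widehat{I-iT_{\tilde V_2,\tilde V_1}}(\lambda)=I+\tilde V_2 R_0(\lambda)\tilde V_1$ on $L^2\oplus\mathbb C^N$ is, by the standard identity $(I+AB)^{-1}=I-A(I+BA)^{-1}B$, equivalent to invertibility of
$$I+R_0(\lambda)\bigl(V-P_p(H-i\delta)\bigr)=R_0(\lambda)\bigl[H-P_p(H-i\delta)-\lambda\bigr]$$
on $L^{6,2}$. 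Decomposing $H-P_p(H-i\delta)=HP_c+i\delta P_p$ shows this modified Hamiltonian has spectrum $[0,\infty)\cup\{i\delta\}$. For $\mathrm{Im}\,\lambda\le 0$ the point $i\delta$ is automatically avoided, and the only obstruction is $\lambda\in[0,\infty)$: here the hypotheses of no embedded eigenvalues and no threshold resonances give the limiting absorption principle for $H$, producing bounded boundary values from $L^{6/5,2}$ to $L^{6,2}$ that extend continuously to the real axis. Combined with the $TT^*$ bound $R_0(\lambda)\colon L^{6/5,2}\to L^{6,2}$ that underlies the free endpoint Strichartz estimate, this yields the required invertibility on the closed lower half-plane.

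The approximation claim follows from the definition of $L^{3/2,\infty}_0$ as the $L^{3/2,\infty}$-closure of $L^\infty_c$: truncating $V_1,V_2$ to bounded compactly supported functions gives $\mathcal L(L^2,L^{6/5,2})$-approximations that land in $\langle x\rangle^{-N}L^2$ for any fixed $N$, while the rank-$N$ piece involving the exponentially decaying $\phi_j$ already has this property. The main obstacle is the invertibility step: the resolvent manipulation is algebraically clean, but the substantive content is promoting the spectral-theoretic hypothesis (no embedded eigenvalue, no zero resonance) into a uniform operator-norm bound for the inverse on the closed lower half-plane, continuous up to the boundary, in Lorentz-space topology. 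Away from $\lambda=0$ this is a Fredholm alternative driven by compactness of $\tilde V_2 R_0(\lambda)\tilde V_1$; at the threshold $\lambda=0$ the argument requires a finer Jensen--Kato-type analysis to rule out a singularity, which is precisely what the no-threshold-resonance assumption supplies.
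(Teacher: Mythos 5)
The paper does not give its own argument for this lemma; it simply defers to the reference \cite{bec} (``For the proof, we refer the reader to \cite{bec}''), so there is no proof in the source to compare against. Judged on its own terms, your construction is natural and the algebra checks out: with $\tilde V_1(f,c)=V_1 f+\sum_j c_j\phi_j$ and $\tilde V_2 g=(V_2 g,\,-(E_j-i\delta)\langle g,\phi_j\rangle)_j$, one indeed has $\tilde V_1\tilde V_2 g=Vg-\sum_j(E_j-i\delta)\langle g,\phi_j\rangle\phi_j=(V-P_p(H-i\delta))g$, since $P_pHg=\sum_j E_j\langle g,\phi_j\rangle\phi_j$. The Fourier-side reduction $\widehat{I-iT_{\tilde V_2,\tilde V_1}}(\lambda)=I+\tilde V_2R_0(\lambda)\tilde V_1$ for $\Im\lambda<0$ and the passage through $(I+AB)^{-1}=I-A(I+BA)^{-1}B$ to the modified Hamiltonian $HP_c+i\delta P_p$, whose spectrum is $[0,\infty)\cup\{i\delta\}$, is exactly the right algebraic skeleton.

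What you should be aware of is where the substantive weight sits. The line ``producing bounded boundary values from $L^{6/5,2}$ to $L^{6,2}$ that extend continuously to the real axis'' is the crux, and it is not a corollary of the stated hypotheses alone. For $V\in L^{3/2,\infty}_0$ the compactness of $\tilde V_2R_0(\lambda)\tilde V_1$ in the Lorentz-space topology, the uniform-in-$\lambda$ invertibility on the closed lower half-plane, and the good behavior at $\lambda=0$ are precisely the content of the Goldberg--Schlag limiting absorption principle and of \cite{bec}; your sketch acknowledges but does not discharge these points. In particular, absence of embedded eigenvalues gives injectivity of $I+\tilde V_2R_0(\lambda)\tilde V_1$ pointwise on $(0,\infty)$, absence of a zero resonance handles $\lambda=0$, and the norm-continuity of $\lambda\mapsto \tilde V_2R_0(\lambda)\tilde V_1$ in the operator topology (which is where $L^{3/2,\infty}_0$, rather than $L^{3/2,\infty}$, enters) is what upgrades pointwise invertibility to a uniform bound by a compactness/Wiener-type argument. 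As written, your ``proof'' is a correct reduction to these known results rather than a self-contained proof, which is in fact in the same spirit as the paper's treatment: the paper, too, simply cites them. One smaller point: the final approximation claim should be stated for the operators $\tilde V_1,\tilde V_2^*$ rather than only for the scalar factors $V_1,V_2$ --- you do note that the rank-$N$ part decays exponentially by Lemma \ref{lema36}, which is the right observation, but the statement should be closed off by noting that the approximating operators are still of the same block form $L^2\oplus\mathbb C^N\to\langle x\rangle^{-N}L^2$ after conjugating by the fixed unitary $L^2\oplus\mathbb C^N\cong L^2$.
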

For the proof, we refer the reader to \cite{bec}.





Since $\gamma \in \CC$ need not be differentiable, we consider source terms, in Schr\"{o}dinger's equation, that are merely distributions with respect to $t$ --- but which are supported on a specific space of time-dependent functions.

In the sequel, in the course of the paper, we use two fundamental properties of the free Schr\"{o}dinger evolution --- namely, that it satisfies Strichartz estimates and produces local smoothing. We refer the reader to Theorem 1.13 in \cite{mmt} (applied to the free evolution).

Let $\chi$ be a smooth cutoff function supported on $[1/2, 4]$ such that $\chi(r)=1$ on $[1, 2]$ and $P_N f = N\widehat \chi(N |x|) * f(x)$. Define
$$\begin{aligned}
\|F\|_X^2 &= \sup_{\ell} \big\|\chi(2^{-\ell} x) |x|^{-1/2} P_{2^k} F\big\|_{L^2_t \dot H^{1/2}_x}^2 \\
&\sim \sum_{k=-\infty}^{\infty} 2^k \sup_{\ell} \|\chi(2^{-\ell} x) |x|^{-1/2} P_{2^k} F\|_{L^2_{t, x}}^2.
\end{aligned}$$

The smoothing result that we use for the free Schr\"{o}dinger evolution is then, following \cite{mmt},
\begin{lemma}
$$
\|e^{it\Delta} f\|_{X \cap L^2_t L^{6, 2}_x \cap L^{\infty}_t L^2_x} \les \|f\|_{X' + L^2_t L^{6/5, 2}_x + L^1_t L^2_x}.
$$
\end{lemma}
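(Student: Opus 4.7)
The plan is to decompose the claimed estimate into three homogeneous bounds for the free propagator $e^{it\Delta}$ on $\R^3$ and then assemble the full Duhamel estimate via a $TT^*$ argument combined with the Christ--Kiselev lemma.

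First, the homogeneous bounds $L^2_x \to Y$ for each of the three output spaces $Y \in \{L^\infty_t L^2_x,\ L^2_t L^{6,2}_x,\ X\}$ are classical: (i) $L^2$-unitarity of $e^{it\Delta}$ immediately gives the energy bound $\|e^{it\Delta}f\|_{L^\infty_t L^2_x} = \|f\|_{L^2_x}$; (ii) the endpoint Keel--Tao Strichartz estimate in its Lorentz-refined form gives $\|e^{it\Delta}f\|_{L^2_t L^{6,2}_x} \les \|f\|_{L^2_x}$; (iii) the Kato-type local smoothing estimate, frequency-localized by $P_N$ and spatially localized by $\chi(2^{-\ell}x)$, gives $\||x|^{-1/2}P_N e^{it\Delta}f\|_{L^2_{t,x}} \les N^{-1/2}\|P_N f\|_{L^2_x}$ with a constant uniform in $\ell$. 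Squaring, summing over $N$ with the $\dot H^{1/2}_x$ weight and taking the sup in $\ell$ reproduces $\|e^{it\Delta}f\|_X \les \|f\|_{L^2_x}$.

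Second, I would dualize each of the three homogeneous mappings $e^{it\Delta}:L^2_x \to Y$ to obtain the corresponding bound $Y' \to L^2_x$ for the adjoint operator $F \mapsto \int_\R e^{-is\Delta} F(s)\dd s$. Composition then yields, for every ordered pair of input/output spaces selected among the three, the non-retarded Duhamel estimate $\bigl\|\int_\R e^{i(t-s)\Delta} F(s)\dd s\bigr\|_{Y_{out}} \les \|F\|_{Y'_{in}}$, and hence all nine pairings in the non-retarded version.

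Third, the retarded Duhamel operator $F \mapsto \int_0^t e^{i(t-s)\Delta}F(s)\dd s$ is recovered from the non-retarded one by the Christ--Kiselev lemma, which is valid whenever the temporal Lebesgue exponent of the input is strictly smaller than that of the output. The single degenerate pair is $L^2_t L^{6/5,2}_x \to L^2_t L^{6,2}_x$, where both time exponents equal $2$ and Christ--Kiselev fails. At this endpoint one bypasses Christ--Kiselev entirely and invokes the Keel--Tao bilinear endpoint argument directly. The main obstacle is precisely this endpoint Strichartz self-pairing, and it is also the reason we are forced to use the Lorentz refinements $L^{6,2}_x$ and $L^{6/5,2}_x$ rather than ordinary $L^6_x$, $L^{6/5}_x$. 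All other combinations, including every pairing involving the Kato norm $X$ or the energy norm $L^\infty_t L^2_x$, follow routinely from $TT^*$ and Christ--Kiselev. Alternatively, one may simply cite Theorem 1.13 of \cite{mmt} applied to the free evolution, which packages exactly this combined homogeneous and inhomogeneous bound.
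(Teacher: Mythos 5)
Your final sentence --- citing Theorem 1.13 of \cite{mmt} applied to the free evolution --- is exactly what the paper does: this lemma is stated without a proof of its own, the authors explicitly directing the reader to \cite{mmt} in the preceding paragraph. So as a minimal justification your proposal is on target.

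Your sketch of a self-contained argument, however, contains a genuine gap. You assert that the only pairing for which Christ--Kiselev fails is $L^2_t L^{6/5,2}_x \to L^2_t L^{6,2}_x$, and that ``every pairing involving the Kato norm $X$ or the energy norm $L^\infty_t L^2_x$ follows routinely from $TT^*$ and Christ--Kiselev.'' But $X$ is itself an $L^2_t$-based norm (a $\sup_\ell$ of spatially localized $L^2_t \dot H^{1/2}_x$ norms), and the same goes for $X'$. Consequently all four retarded pairings sending $X'$ or $L^2_t L^{6/5,2}_x$ into $X$ or $L^2_t L^{6,2}_x$ have temporal exponent $2$ on both sides, and Christ--Kiselev is unavailable for each of them, not merely for the endpoint Strichartz self-pairing. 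The retarded $X' \to X$ bound in particular is not reducible to the homogeneous ones by $TT^*$ plus Christ--Kiselev; in \cite{mmt} it is obtained by a direct resolvent/limiting-absorption argument, from which the remaining retarded pairings are then derived. So the portion of your argument that you describe as ``routine'' is precisely where the substantial content of the estimate lies.
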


We first prove the following technical inequalities:
\begin{lemma}\lb{lemma_2.4} Let $B_1(t)$ and $B_2(t)$ be time-dependent operators of the form
$$
B_j(t) = e^{\gamma_j(t) \dl} b_j(t)
$$
with $\gamma_j' \in L^{\infty}$, $b_j \in W^{1, \infty}$, $j=1, 2$. Then
\be\begin{aligned}\lb{2.15}
&\bigg\|\Big\langle \int_{-\infty}^t B_1(t) e^{i(t-s)\Delta} B_2(s) F(s)\dd s, G(t) \Big\rangle\bigg\|_{\CC_t} \les_{B_1, B_2} \\
& \|F\|_{L^2_t L^{6/5, 2}_x + L^1_t L^2_x} \|G\|_{C_t \langle \dl \rangle^{-1} \langle x \rangle^{-1-\epsilon} L^2_x \cap \dot H^{1/2}_t L^2_x};
\end{aligned}\ee
\be\begin{aligned}\lb{2.5}
&\Big\|\int_{-\infty}^t \big\langle B_1(t) e^{i(t-s)\Delta} B_2(s) F(s), G(t) \big\rangle \dd s \Big\|_{\CC_t} \les_{B_1, B_2} \\
&\|F\|_{\dot H^{-1/2}_t \langle \dl \rangle^{-1} \langle x \rangle^{-1-\epsilon} L^2_x \cap \partial_t C_t L^2_x + L^1_t L^2_x} \|G\|_{C_t \langle \dl \rangle^{-1} \langle x \rangle^{-1-\epsilon} L^2_x \cap \dot H^{1/2}_t L^2_x}.
\end{aligned}\ee
\end{lemma}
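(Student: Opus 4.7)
The strategy reduces the bound to an antiderivative estimate: by Lemma \ref{lemma24}, it suffices to show that the distributional time derivative of the left-hand side $\phi(t)$ lies in $\CC'_t = \dot H^{-1/2}_t \cap \partial_t C_t + \mc M_t$. Rewriting $\phi(t) = \langle u(t), W(t)\rangle$ with
$$
u(t) := \int_{-\infty}^t e^{i(t-s)\Delta} B_2(s) F(s) \dd s, \qquad W(t) := B_1(t)^* G(t),
$$
one sees that $u$ solves $(\partial_t - i\Delta) u = B_2 F$. By the free Strichartz and local-smoothing estimates (Theorem 1.13 of \cite{mmt}), combined with the $L^\infty$-boundedness of $b_2$ and unitarity of $e^{\gamma_2 \dl}$,
$$
\|u\|_{L^\infty_t L^2_x \,\cap\, L^2_t L^{6,2}_x \,\cap\, X} \les \|F\|_{L^1_t L^2_x + L^2_t L^{6/5,2}_x}.
$$
By Lemma \ref{lemma24} applied to $W = \overline{b_1}\, e^{-\gamma_1\dl} G$, the adjoint inherits the regularity of $G$: $W \in C_t H^1_x$ (with $\langle x\rangle^{-1-\epsilon}$ decay) and $W \in \dot H^{1/2}_t L^2_x$, with constants depending on $B_1$.

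Differentiating in $t$ by the product rule and using $(\partial_t - i\Delta) u = B_2 F$,
$$
\partial_t \phi(t) = \big\langle B_1(t)B_2(t) F(t), G(t)\big\rangle + \big\langle u(t), -i\Delta W(t)\big\rangle + \big\langle u(t), \partial_t W(t)\big\rangle.
$$
The diagonal term is split according to $F = F_1 + F_2$ with $F_1 \in L^2_t L^{6/5,2}_x$, $F_2 \in L^1_t L^2_x$: the contribution of $F_2$ lies in $L^1_t \subset \mc M_t$, while that of $F_1$ pairs against $W \in L^\infty_t L^{6,2}_x$ (via the Sobolev embedding $H^1_x \hookrightarrow L^{6,2}_x$) to give an $L^2_t$ function. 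The third term is handled by Cauchy--Schwarz between $\partial_t W \in \dot H^{-1/2}_t L^2_x$ (from Lemma \ref{lemma24} applied to $G$) and $u \in L^\infty_t L^2_x$, placing it in $\dot H^{-1/2}_t$. The middle term is treated by moving $\Delta$ onto $W$: the decay $\langle x\rangle^{-1-\epsilon}$ of $W$ and the $H^1_x$ regularity render $-i\Delta W$ an object dual to the local smoothing norm $X$, so its pairing with $u \in X$ lies again in $L^2_t$.

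The main technical obstacle is that $L^2_t(\R)$ is not contained in $\CC'_t$ in general --- the antiderivative of an $L^2_t$ function may grow like $\sqrt{|t|}$, failing both the $\partial_t C$ and $\mc M$ conditions. To address this, one should group the $L^2_t$ pieces emerging from the diagonal and middle terms and identify their joint antiderivative as $\langle u_1(t), W(t)\rangle$, where $u_1$ is the Duhamel evolution driven by $F_1$, which by Strichartz is bounded and continuous in $t$ ($u_1 \in L^\infty_t L^2_x \cap L^2_t L^{6,2}_x$). This identification realizes the troublesome contributions as genuine elements of $\partial_t C_t \cap \dot H^{-1/2}_t \subset \CC'_t$, closing the argument. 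For (\ref{2.5}), the same scheme applies after the preliminary decomposition $F = \partial_t H + L$ with $H \in \dot H^{1/2}_t \langle\dl\rangle^{-1}\langle x\rangle^{-1-\epsilon} L^2_x \cap \partial_t^{-1}(C_t L^2_x)$ and $L \in L^1_t L^2_x$; integration by parts in $s$ transfers the distributional time derivative from $F$ onto the kernel $B_1(t)e^{i(t-s)\Delta}B_2(s)$, reducing the $H$-contribution to a bilinear expression of the form already handled in (\ref{2.15}).
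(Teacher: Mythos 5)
The central gap in your argument is circular: having observed (correctly) that the $L^2_t$ pieces of $\partial_t\phi$ coming from the diagonal and middle terms are not by themselves in $\CC'_t$, you propose to resolve this by noting that their sum is $\partial_t\langle u_1(t),W(t)\rangle$, and that $u_1\in C_t L^2_x$ by Strichartz. But $\CC'_t$ contains $\partial_t(\dot H^{1/2}_t\cap C_t) + \mc M_t$, so besides the continuity you also need $\langle u_1(t),W(t)\rangle\in\dot H^{1/2}_t$ --- which is precisely the bound the lemma asserts. You assert ``genuine elements of $\partial_t C_t\cap\dot H^{-1/2}_t$'' without establishing the $\dot H^{-1/2}_t$ half. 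Note also that $L^2_t(\R)\not\subset\dot H^{-1/2}_t(\R)$ (the Fourier transform need not vanish near $\lambda=0$), so the pointwise-in-time $L^2_t$ bounds you obtain for the individual pieces cannot by themselves supply the $\dot H^{-1/2}_t$ membership. Similarly, the Cauchy--Schwarz step for $\langle u,\partial_t W\rangle$ is not justified: $u\in L^\infty_t L^2_x$ and $\partial_t W\in\dot H^{-1/2}_t L^2_x$ do not place the scalar pairing in $\dot H^{-1/2}_t$, because multiplication by a merely bounded time-dependent factor does not preserve $\dot H^{-1/2}_t$; one again needs half a time derivative on $u$, and that is the missing estimate.

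The paper avoids this circularity by proving the $\dot H^{1/2}_t$ bound on the pairing directly, without differentiating in $t$. It uses the finite-difference characterization of $\dot H^{1/2}_t$ from Lemma \ref{besov} to derive a bilinear fractional Leibniz inequality, $\|\langle F_1,G\rangle\|_{\dot H^{1/2}_t}\les\|F_1\|_{\dot H^{1/2}_t\langle\dl\rangle\langle x\rangle^{1+\epsilon}L^2_x}\|G\|_{C_t\langle\dl\rangle^{-1}\langle x\rangle^{-1-\epsilon}L^2_x}+\|F_1\|_{C_t L^2_x}\|G\|_{\dot H^{1/2}_t L^2_x}$, and then establishes $F_1\in\dot H^{1/2}_t\langle\dl\rangle^{1/2}\langle x\rangle^{1+\epsilon}L^2_x$ by complex interpolation between a local-smoothing bound for $F_1$ (in $L^2_t\langle\dl\rangle^{-1/2}\langle x\rangle^{1/2+\epsilon}L^2_x$) and a bound on $\partial_t F_1$ obtained from the Duhamel/equation structure. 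The $L^1_t L^2_x$ component of $F$ and estimate (\ref{2.5}) are treated by the same interpolation scheme at shifted endpoints. If you want to salvage your approach, you must import this half-derivative-in-time estimate on the Duhamel solution $u_1$; without it, the reduction to $\partial_t\phi$ does not close.
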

\begin{proof}

(\ref{2.15}) is an adaptation of the fractional Leibniz rule and we prove it as such. To begin with, consider $F(x, t) \in L^2_t L^{6/5, 2}_x$ and let
$$
F_1(x, t) = \int_{-\infty}^t B_1(t) e^{i(t-s)\Delta} B_2(s) F(y, s) \dd s.
$$
Since $\|B_2(t) F(t)\|_{L^2_t L^{6/5, 2}_x} \les_{B_2} \|F\|_{L^2_t L^{6/5, 2}_x}$, without loss of generality, we could set $B_2 \equiv I$.
Following Lemma \ref{besov},
$$\begin{aligned}
\|\langle F_1(t), G(t)\rangle\|_{\dot H^{1/2}_t} &\les \big\|2^{k/2} \sup_{|h| \leq 2^{-k}} \|\langle F_1(t+h), G(t+h) \rangle - \langle F_1(t), G(t)\rangle\|_{L^2_t}\big\|_{\ell^2_k} \\
&\les \big\|2^{k/2} \sup_{|h| \leq 2^{-k}} \|\langle F_1(t+h) - F_1(t), G(t+h)\rangle\|_{L^2_t}\big\|_{\ell^2_k} + \\
& +\big\|2^{k/2} \sup_{|h| \leq 2^{-k}} \|\langle F_1(t), G(t+h) - G(t)\rangle\|_{L^2_t}\big\|_{\ell^2_k} \\
& \les \|F_1\|_{\dot H^{1/2}_t \langle \dl \rangle \langle x \rangle^{1+\epsilon} L^2_x} \|G\|_{C_t \langle \dl \rangle^{-1} \langle x \rangle^{-1-\epsilon} L^2_x} + \|F_1\|_{C_t L^2_x} \|G\|_{\dot H^{1/2}_t L^2_x}.
\end{aligned}$$
The second term is bounded by Strichartz estimates, since
$$\begin{aligned}
\|F_1(t)\|_{C_t L^2_x} \|G(t)\|_{\dot H^{1/2}_t L^2_x} \les_{B_1, B_2} \|F\|_{L^2_t L^{6/5, 2}_x} \|G(t)\|_{\dot H^{1/2}_t L^2_x}.
\end{aligned}$$
For the first term, observe that
$$
\|\langle x\rangle^{-1/2-\epsilon} F_1\|_{L^2_t \dot H^{1/2}_x} \les_{B_1, B_2} \|F\|_{L^2_t L^{6/5, 2}_x}
$$
and
$$\begin{aligned}
\partial_t F_1(t) &= B_1'(t) \int_{-\infty}^t e^{i(t-s)\Delta} B_2(s) F(y, s) \dd s + \\
&+ B_1(t) B_2(t) F(y, t) + iB_1(t) \Delta \int_{-\infty}^t e^{i(t-s)\Delta} B_2(s) F(y, s) \dd s,
\end{aligned}$$
which implies that
$$
\|\langle x\rangle^{-3/2-\epsilon} \partial_t \langle\dl\rangle^{-3/2} F_1\|_{L^2_{t, x}} \les_{B_1, B_2} \|F\|_{L^2_t L^{6/5, 2}_x}.
$$
By complex interpolation of exponent $1/2$,
we obtain
$$
\|F_1\|_{\dot H^{1/2}_t  \langle \dl \rangle^{1/2} \langle x \rangle^{1+\epsilon} L^2_x} \les_{B_1, B_2} \|F\|_{L^2_t L^{6/5, 2}_x}.
$$
Continuity follows in (\ref{2.15}) since $\langle \dl \rangle^{-1} \langle x \rangle^{-2} L^2 \subset L^2$ and
$$
\|\langle F_1(t), G(t)\rangle\|_{C_t} \les \|F_1\|_{C_t L^2_x} \|G\|_{C_t \langle \dl \rangle^{-1} \langle x \rangle^{-1-\epsilon} L^2_x}.
$$
Next, let $F_1(t) = B(t) e^{it\Delta} f$, $f \in L^2$. Then
$$
\|F_1(t)\|_{C_t L^2_x \cap L^2_t L^{6, 2}_x} \les_U \|f\|_2
$$
and
$$
\partial_t F_1 = B'(t) e^{it\Delta} f + iB(t)\Delta e^{it\Delta} f,
$$
leading to
$$
\|\langle\Delta\rangle^{-1} F_1\|_{\partial_t^{-1} L^2_t \langle x \rangle L^{6, 2}_x} \les \|u\|_{W^{1, \infty}} \|f\|_2.
$$
By the same process as above, we obtain
$$
\|F_1\|_{\dot H^{1/2}_t  \langle \dl \rangle \langle x \rangle^{1+\epsilon} L^2_x \cap C_t L^2_x} \les \|u\|_{W^{1, \infty}} \|f\|_{2}.
$$
Cutting off $F_1$ by $\chi_{[0, \infty)}(t)$, it follows as in Lemma \ref{lemma24} that
$$
\|\chi_{[0, \infty)}(t) F_1(t)\|_{\dot H^{1/2}_t \langle \dl \rangle \langle x \rangle^{1+\epsilon} L^2_x \cap C_t L^2_x + \BV_t L^2_x} \les \|u\|_{W^{1, \infty}} \|f\|_{2}
$$
and, more generally, if $F_1(t) = B_1(t) \int_{-\infty}^t e^{i(t-s)\Delta} B_2(s) F(s) \dd s$,
\be\lb{L2}
\|F_1\|_{\dot H^{1/2}_t  \langle \dl \rangle \langle x \rangle^{1+\epsilon} L^2_x \cap C_t L^2_x + \BV_t L^2_x} \les \|u_1\|_{\U} \|u_2\|_{\U} \|F\|_{L^1_t L^2_x}.
\ee
This completes the proof of (\ref{2.15}), as it implies that
$$
\langle F_1, G \rangle_{\Gamma} \les_{B_1, B_2} \|F\|_{L^1_t L^2_x} \|G\|_{C_t \langle \dl \rangle^{-1} \langle x \rangle^{-1-\epsilon} L^2_x \cap \dot H^{1/2}_t L^2_x}.
$$
The dual of (\ref{L2}) implies that (and is strictly stronger than)
\be\lb{L^2_dual}
\|F_1\|_{C_t L^2_x} \les_{B_1, B_2} \|F\|_{\dot H^{-1/2}_t  \langle \dl \rangle^{-1} \langle x \rangle^{-1-\epsilon} L^2_x \cap \partial_t C_t L^2_x + L^1_t L^2_x}.
\ee
Next, we prove (\ref{2.5}), which gains a full derivative in $t$.
Starting from
$$\begin{aligned}
&\|F_1\|_{\partial_t^{-1} L^2_t \langle\dl\rangle \langle x \rangle L^{6, 2}_x} \les_{B_1, B_2} \|F\|_{L^2_t \langle \dl \rangle^{-1} \langle x \rangle^{-1} L^{6/5, 2}_x}
\end{aligned}$$
and its dual
$$\begin{aligned}
&\|F_1\|_{L^2_t \langle\dl\rangle \langle x \rangle L^{6, 2}_x} \les_{B_1, B_2} \|F\|_{\partial_t L^2_t \langle \dl \rangle^{-1} \langle x \rangle^{-1} L^{6/5, 2}_x},
\end{aligned}$$
by complex interpolation of exponent $1/2$ we obtain, as above,
$$\begin{aligned}
&\|F_1\|_{\dot H^{1/2}_t \langle \dl \rangle \langle x \rangle^{1+\epsilon} L^2_x} \les_{B_1, B_2} \|F\|_{\dot H^{-1/2}_t \langle \dl \rangle^{-1} \langle x \rangle^{-1-\epsilon} L^2_x}.
\end{aligned}$$
We then use
$$
\|\langle F_1, G \rangle\|_{\dot H^{1/2}_t} \les \|F_1\|_{\dot H^{1/2}_t \langle \dl \rangle \langle x \rangle^{1+\epsilon} L^2_x} \|G\|_{C_t \langle \dl \rangle^{-1} \langle x \rangle^{-1-\epsilon} L^2_x} + \|F_1\|_{C_t L^2_x} \|G(t)\|_{\dot H^{1/2}_t L^2_x}.
$$
This establishes the $\dot H^{1/2}$ conclusion of (\ref{2.5}). In order to bound $\|F_1\|_{C_t L^2_x}$, we invoke (\ref{L^2_dual}).
\end{proof}

The subsequent lemma controls the singular terms that appear in the linearized Schr\"{o}dinger equation, which are distributions with respect to time.

\begin{lemma}\lb{lema35}  Let
$$
S(t) = e^{D(t)\dl} e^{iv(t)x} e^{\beta(t)(x\dl + 3/2)} e^{i\alpha(t)},\ \beta, v, D, \alpha \in \dot H^{1/2} \cap C,
$$
and assume that $B_1$, $B_2$ are as in Lemma \ref{lemma_2.4}. Then
\be\begin{aligned}\lb{eqn2.11}
&\Big\| \Big\langle \int_{-\infty}^t S(t)^{-1} B_1(t) e^{i(t-s)\Delta} B_2(s) S(s) F(s)\dd s, g \Big \rangle \Big\|_{\CC_t} \les_{B_1, B_2, \beta, v, D, \alpha} \\
&\|F\|_{L^2_t L^{6/5, 2}_x + L^1_t L^2_x} \|g\|_{\langle \dl \rangle^{-1} \langle x \rangle^{-1-\epsilon} L^2_x};
\end{aligned}\ee
\be\begin{aligned}\lb{eq2.45}
&\Big\|\Big\langle \int_{-\infty}^t S(t)^{-1} B_1(t) e^{i(t-s)\Delta} B_2(s) S(s) \phi(s) f\dd s, g \Big\rangle\Big\|_{\CC_t} \les_{B_1, B_2, \beta, v, D, \alpha} \\
&\|\phi\|_{\CC'} \|f\|_{\langle \dl \rangle^{-1} \langle x \rangle^{-1-\epsilon} L^2_x} \|g\|_{\langle \dl \rangle^{-1} \langle x \rangle^{-1-\epsilon} L^2_x}.
\end{aligned}\ee
\end{lemma}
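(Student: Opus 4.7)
The plan is to reduce both statements to Lemma \ref{lemma_2.4} by transferring the $L^2$-unitary factors $S(t)^{-1}$ and $S(s)$ onto the test function $g$ and the source. Since $S(t)^* = S(t)^{-1}$ on $L^2_x$, both inner products can be rewritten as
$$\int_{-\infty}^t \langle B_1(t) e^{i(t-s)\Delta} B_2(s) \tilde F(s), \tilde G(t) \rangle \dd s,$$
where $\tilde G(t) = S(t) g$ throughout, while $\tilde F(s) = S(s) F(s)$ in (\ref{eqn2.11}) and $\tilde F(s) = \phi(s) S(s) f$ in (\ref{eq2.45}). The goal is then to apply (\ref{2.15}) and (\ref{2.5}) respectively, so one has to verify that $\tilde G$ belongs to $C_t \langle\dl\rangle^{-1}\langle x\rangle^{-1-\epsilon} L^2_x \cap \dot H^{1/2}_t L^2_x$ with norm $\les_\pi \|g\|_{\langle\dl\rangle^{-1}\langle x\rangle^{-1-\epsilon} L^2_x}$, and that $\tilde F$ satisfies the corresponding source bound.

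The $\dot H^{1/2}_t L^2_x$ bound on $\tilde G(t) = S(t) g$ follows by iterating the four time-regularity estimates from Lemma \ref{lemma24} -- one each for the translation $e^{D(t)\dl}$, the modulation $e^{iv(t)x}$, the dilation $e^{\beta(t)(x\dl+3/2)}$ and the phase $e^{i\alpha(t)}$; each contributes a product of $\|\pi\|_{\dot H^{1/2}}$ with one of $\|g\|_{L^2}$, $\|\dl g\|_{L^2}$, $\|xg\|_{L^2}$, $\|x\dl g\|_{L^2}$, all controlled by $\|g\|_{\langle\dl\rangle^{-1}\langle x\rangle^{-1-\epsilon} L^2_x}$. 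The $C_t$-weighted bound follows from $\pi \in L^{\infty}$ (since $\dot H^{1/2}\cap C \subset L^{\infty}$): elementary changes of variable show that translation, modulation, dilation and phase by uniformly bounded amounts preserve the weighted norm $\|\langle\dl\rangle\langle x\rangle^{1+\epsilon}\cdot\|_{L^2_x}$ up to constants depending only on $\|\pi\|_{L^{\infty}}$. For (\ref{eqn2.11}) the source bound on $\tilde F = S F$ is immediate, since $S(s)$ is $L^2$-unitary and has $L^{6/5, 2}_x$ operator norm $e^{-\beta(s)} \les 1$ (only the dilation contributes, and $\beta$ is bounded).

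For (\ref{eq2.45}) I would control the product $\tilde F = \phi \cdot S f$ by combining the algebra property $\|\phi \psi\|_{\CC'} \les \|\phi\|_{\CC'}\|\psi\|_{\CC}$ from Lemma \ref{lemma24} with the bound $\|S(\cdot) f\|_{\CC_t L^2_x} \les_\pi \|f\|_{\langle\dl\rangle^{-1}\langle x\rangle^{-1-\epsilon} L^2_x}$ (again Lemma \ref{lemma24}). The main obstacle is matching the decomposition $\CC' = (\dot H^{-1/2}\cap \partial_t C) + \mc M$ of $\phi$ against the mixed source norm $\dot H^{-1/2}_t \langle\dl\rangle^{-1}\langle x\rangle^{-1-\epsilon} L^2_x \cap \partial_t C_t L^2_x + L^1_t L^2_x$ demanded by (\ref{2.5}). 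The $\dot H^{-1/2}\cap \partial_t C$ piece of $\phi$ lands in the first sum-component via the algebra property together with the time-regularity of $Sf$. For the $\mc M$ piece I would split it into its continuous and purely atomic parts: the continuous measure yields an $L^1_t L^2_x$ source after absorbing $Sf \in C_t L^2_x$, while each atom $\phi(\{s_k\})$ is treated as shifted initial data at time $s_k$, so that the resulting homogeneous contribution $\chi_{[s_k,\infty)}(t) B_1(t) e^{i(t-s_k)\Delta} B_2(s_k) S(s_k) f$ is controlled in $\CC_t$ via the endpoint homogeneous Strichartz and local smoothing estimates, exactly as in the proof of Lemma \ref{lemma_2.4}; the total contribution sums to $\|\phi_2\|_{\mc M}\|f\|_{\langle\dl\rangle^{-1}\langle x\rangle^{-1-\epsilon} L^2_x}$.
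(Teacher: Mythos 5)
Your treatment of (\ref{eqn2.11}) matches the paper's exactly: both set $G(t)=S(t)g$ in (\ref{2.15}) and verify the regularity of $S(t)g$ via Lemma~\ref{besov} and Lemma~\ref{lemma24}.

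For (\ref{eq2.45}) you take a genuinely different route. The paper proves that $F(s)=S(s)\phi(s)f$ lies in the source space of (\ref{2.5}) by duality: it tests $\int_\R\langle F(s),G(s)\rangle\,ds$ against $G$ in $\dot H^{1/2}_s\langle\dl\rangle\langle x\rangle^{1+\epsilon}L^2_x\cap L^\infty_s L^2_x$ and separately against $G\in BV_s L^2_x$, transferring the algebra estimates onto the scalar pairing $\langle S(s)f,G(s)\rangle$. This handles all of $\phi\in\CC'$ uniformly, with no further case analysis. You instead decompose $\phi$ directly and try to place each piece of $\phi\cdot S(\cdot)f$ in the source space by hand; this is workable, and the algebra bound $\|\phi\psi\|_{\CC'}\les\|\phi\|_{\CC'}\|\psi\|_\CC$ combined with $\|S(\cdot)f\|_{\CC_t L^2_x}\les_\pi\|f\|_{\langle\dl\rangle^{-1}\langle x\rangle^{-1-\epsilon}L^2_x}$ indeed disposes of the $\dot H^{-1/2}\cap\partial_t C$ part of $\phi$.

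There is, however, a gap in your treatment of the measure part $\phi_2\in\mc M$. You split $\phi_2$ into a ``continuous'' and a ``purely atomic'' part and assert that the continuous part, multiplied against $Sf\in C_t L^2_x$, produces an $L^1_t L^2_x$ source. This is only true for the \emph{absolutely} continuous part of $\phi_2$; a singular continuous measure (Cantor-type) times a bounded continuous $L^2_x$-valued function is again a singular measure, not an element of $L^1_t L^2_x$, and also has no atoms for your shifted-initial-data argument to act on. You would need either to extend the atomic argument by a weak-$*$ density/limiting step to cover the full measure (approximating $\phi_2$ by finite sums of Diracs and passing to the limit in the $\CC_t$ bound), or, more simply, to adopt the paper's duality device, which sidesteps the Lebesgue decomposition entirely. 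As written the Cantor-type component is unaccounted for, so the proof of (\ref{eq2.45}) is incomplete, though the overall strategy is salvageable.
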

\begin{proof}

Note that interchanging $e^{D(t)\dl}$ and $e^{iv(t)x}$ in the definition of $S$ only generates a factor of $e^{iD(t) \cdot v(t)}$, which can be absorbed into $e^{i\alpha(t)}$. Likewise, commuting $e^{\beta(t)(x\dl + 3/2)}$ with the other operators only changes $v$ to $\beta v$, $D$ to $\beta D$, leaving the nature of the expression unchanged --- so the order in which we write these operators is unimportant. Furthermore, applying $S(t)$ to $F(t)$ leaves its $L^2_t L^{6, 2}_x$ norm unchanged.

In (\ref{2.15}), let $G(t) = S(t) g = e^{D(t)\dl} e^{iv(t)x} e^{\beta(t)(x\dl + 3/2)} e^{i\alpha(t)} g$. By Lemma~\ref{besov},
\be\lb{eq_g}
\|e^{D(t)\dl} e^{iv(t)x} e^{\beta(t)(x\dl + 3/2)} e^{i\alpha(t)} g\|_{\dot H^{1/2}_t L^2_x} \les_{\beta, v, D, \alpha} \|g\|_{\langle \dl \rangle^{-1} \langle x \rangle^{-1} L^2_x}.
\ee
Furthermore,
\be
\|e^{D(t)\dl} e^{iv(t)x} e^{\beta(t)(x\dl + 3/2)} e^{i\alpha(t)} g\|_{C_t \langle \dl \rangle^{-1} \langle x \rangle^{-1-\epsilon} L^2_x} \les_{\beta, v, D, \alpha} \|g\|_{\langle \dl \rangle^{-1} \langle x \rangle^{-1-\epsilon} L^2_x}.
\ee
This implies the first inequality, (\ref{eqn2.11}).

Concerning (\ref{eq2.45}), letting $F(s) = S(s) \phi(s) f$ in (\ref{2.5}), we show that $F \in \dot H^{-1/2}_s \langle \dl \rangle^{-1} \langle x \rangle^{-1-\epsilon} L^2_x \cap \partial_s C_s L^2_x + L^1_s L^2_x$ by testing it against elements of the dual space: from
$$\begin{aligned}
\int_\R \langle F(s), G(s) \rangle \dd s &= \int_\R \phi(s) \langle S(s) f, G(s) \rangle \dd s \\
&\les \|\phi\|_{\dot H^{-1/2}} \|\langle S(s) f, G(s) \rangle\|_{\dot H^{1/2}_s} \\
&\les \|\phi\|_{\dot H^{-1/2}} \|S(s) f\|_{\dot H^{1/2}_s L^2_x \cap C_s \langle \dl \rangle^{-1} \langle x \rangle^{-1-\epsilon} L^2_x} \|G\|_{\dot H^{1/2}_s \langle \dl \rangle \langle x \rangle^{1+\epsilon} L^2_x \cap L^{\infty}_s L^2_x}
\end{aligned}$$
we obtain, by approximating with smooth functions, since $L^1$ is a closed subspace of $(L^{\infty})^*$, that
$$
\|F\|_{\dot H^{-1/2}_s \langle \dl \rangle^{-1} \langle x \rangle^{-1-\epsilon} L^2_x + L^1_t L^2_x} \les \|\phi\|_{\dot H^{-1/2}} \|S(s) f\|_{\dot H^{1/2}_s L^2_x \cap C_s \langle \dl \rangle^{-1} \langle x \rangle^{-1-\epsilon} L^2_x}.
$$
Likewise,
$$\begin{aligned}
\int_\R \langle F(s), G(s) \rangle \dd s &= \int_\R \phi(s) \langle S(s) f, G(s) \rangle \dd s \\
&\les \|\phi\|_{\CC'} \|\langle S(s) f, G(s) \rangle\|_{\CC_s} \\
&\les \|\phi\|_{\CC'} \|S(s) f\|_{\dot H^{1/2}_s L^2_x \cap C_s L^2_x} \|G\|_{\BV_s L^2_x}
\end{aligned}$$
implies (by approximating with smooth functions) that
$$
\|F\|_{\partial_s C_s L^2_x} \les \|\phi\|_{\CC'} \|S(s) f\|_{\dot H^{1/2}_s L^2_x \cap C_s \langle \dl \rangle^{-1} \langle x \rangle^{-1-\epsilon} L^2_x}.
$$
By Lemma \ref{besov},
$$
\|S(s) f\|_{\dot H^{1/2}_s L^2_x \cap C_s \langle \dl \rangle^{-1} \langle x \rangle^{-1-\epsilon} L^2_x} \les_{\beta, v, D, \alpha} \|f\|_{\langle \dl \rangle^{-1} \langle x \rangle^{-1-\epsilon} L^2}.
$$
Finally, let $G(t) = S(t) g$ in (\ref{2.5}) and treat it by (\ref{eq_g}). We obtain (\ref{eq2.45}).

\end{proof}

The point spectrum of $H=-\Delta+V$ consists of a finite number $N$ of negative energies $E_k$, $1 \leq k \leq N$, with $L^2$-normalized eigenstates $g_1, \ldots, g_N$. 
Let $P_k = \langle \cdot, g_k \rangle g_k$; then $P_p = \sum_{k=1}^N P_k$ is the projection on the point spectrum.

The negative energies are poles of the Birman-Kato operator $(I + R_0(\lambda) V)^{-1}$, which is analytic on $L^{6, 2}$, so by Fredholm's alternative $g_k$ are in $L^{6, 2}$.

However, $g_k$ and their first two derivatives decay exponentially due to Agmon's bound:
\begin{lemma}\lb{lema36}
Assume that $V \in L^{3/2, \infty}_0$ and $f \in L^{6, 2}$ is an eigenfunction of $-\Delta+V$ corresponding to a negative energy $E<0$: $(-\Delta+V) f = E f$. Then
$$
e^{\sqrt{-E}|x|} f \in L^{p_0},\ e^{\sqrt{-E}|x|} \dl f \in L^{p_0}+L^{p_1},\ e^{\sqrt{-E}|x|} \dl^2 f \in L^{p_0} + L^{p_2},
$$
for any $p_0 \in (3, \infty)$, $p_1 \in (3/2, 3)$, $p_2 \in (1, 3/2)$.

Assume that $V \in L^{3/2}$; then $e^{(\sqrt{-E}-\epsilon)|x|} \dl f \in H^{1}$ for any $\epsilon > 0$.
\end{lemma}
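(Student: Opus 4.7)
The plan is to derive every decay estimate from the explicit integral representation $f = -R_0(E)(Vf)$, where in $\R^3$ the negative-energy resolvent has the positive kernel $R_0(E)(x,y) = e^{-\sqrt{-E}|x-y|}/(4\pi|x-y|)$. This representation is justified because $Vf \in L^{6/5,2}$ (Hölder in Lorentz between $V \in L^{3/2,\infty}$ and $f \in L^{6,2}$), so $R_0(E)(Vf)$ lies in $L^{6,2}$ and must coincide with $f$ by uniqueness. The key pointwise device, obtained from the triangle inequality $|x| \leq |y| + |x-y|$, is
$$
e^{\sigma|x|}|f(x)| \leq \int \frac{e^{-\mu|x-y|}}{4\pi|x-y|}\,|V(y)|\,e^{\sigma|y|}|f(y)|\,dy = \bigl(K_\mu * (|V|g_\sigma)\bigr)(x),
$$
where $g_\sigma := e^{\sigma|x|}|f|$, $\mu := \sqrt{-E}-\sigma \geq 0$, and $K_\mu(x) = e^{-\mu|x|}/(4\pi|x|) \leq 1/(4\pi|x|)$.

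First, I would bootstrap in $\sigma$ to establish the $L^{p_0}$ bound. Fix $p_0 \in (3,\infty)$ and let $s \in (1,3/2)$ satisfy $1/s = 2/3 + 1/p_0$. If $g_\sigma \in L^{p_0}$, then $|V|g_\sigma \in L^s$, and the Riesz-potential bound (HLS in Lorentz, applied to $K_\mu \leq I_2$-kernel) gives $K_\mu*(|V|g_\sigma) \in L^{p_0}$ with norm controlled by $\|V\|_{L^{3/2,\infty}}$. To close the bootstrap, split $V = V_b + V_c$ as permitted by $V \in L^{3/2,\infty}_0$, with $V_b$ bounded and compactly supported and $\|V_c\|_{L^{3/2,\infty}}$ small; the map $g \mapsto K_\mu*(|V_c|g)$ is a strict contraction on $L^{p_0}$ uniformly in $\mu \geq 0$, and the $V_b$ term is a compact perturbation. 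Starting from the trivial fact $g_0 = |f| \in L^{6,2}$ and incrementing $\sigma$ in small steps, one obtains $\|g_\sigma\|_{L^{p_0}}$ uniformly bounded as $\sigma \uparrow \sqrt{-E}$. Fatou's lemma on the pointwise inequality then delivers $g_{\sqrt{-E}} \in L^{p_0}$.

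Differentiating the integral equation for $\nabla f$ (and $\nabla^2 f$) and applying the same triangle-inequality trick yields
$$
e^{\sigma|x|}|\nabla f(x)| \leq \int \left(\frac{\sqrt{-E}}{|x-y|} + \frac{1}{|x-y|^2}\right) e^{-\mu|x-y|}|V(y)|\,e^{\sigma|y|}|f(y)|\,dy.
$$
Splitting the kernel into its two pieces, the $1/|x-y|$ piece yields an $L^{p_0}$ contribution (the range $(3,\infty)$) via $I_2$ just as in the previous step, while the $1/|x-y|^2$ piece is an $I_1$-type kernel and contributes to $L^{p_1}$ with $1/p_1 = 1/s - 1/3$, i.e. $p_1 \in (3/2,3)$. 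A second differentiation produces an additional $1/|x-y|$ factor, whose worst component is an $I_0$-type singularity giving $L^{p_2}$ with $p_2 \in (1, 3/2)$. Taking $\sigma \uparrow \sqrt{-E}$ via Fatou one final time yields the stated $L^{p_0}+L^{p_1}$ and $L^{p_0}+L^{p_2}$ decompositions.

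For the final statement with $V \in L^{3/2}$ and rate $\sqrt{-E}-\epsilon$, I would use the eigenvalue equation directly: $-\Delta f = (E-V)f$. The step (with $p_0 = 6$) gives $e^{(\sqrt{-E}-\epsilon)|x|}f \in L^6$, so Hölder against $V \in L^{3/2}$ yields $e^{(\sqrt{-E}-\epsilon)|x|}Vf \in L^2$; combined with $e^{(\sqrt{-E}-\epsilon)|x|}f \in L^2$ this produces $e^{(\sqrt{-E}-\epsilon)|x|}\Delta f \in L^2$. Commuting the exponential weight through $\nabla$ introduces only lower-order terms controlled by the $L^{p_0}+L^{p_1}$ bounds on $\nabla f$ and the $L^{p_0}$ bound on $f$, giving $e^{(\sqrt{-E}-\epsilon)|x|}\nabla f \in L^2$ and $e^{(\sqrt{-E}-\epsilon)|x|}\nabla^2 f \in L^2$, hence the $H^1$ conclusion. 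The main obstacle will be the passage $\sigma \uparrow \sqrt{-E}$: at $\mu = 0$ the kernel $K_\mu$ loses its exponential tail and reduces to the bare Riesz kernel, so the uniformity of the bootstrap constants is delicate; the Fredholm splitting $V = V_b + V_c$ together with the $L^{3/2,\infty}_0$ hypothesis is exactly what makes the critical-$\sigma$ limit admissible.
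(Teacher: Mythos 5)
Your first-part strategy---conjugating the Yukawa kernel by $e^{\sigma|x|}$ and using $|x|\le|x-y|+|y|$ to bound the conjugated kernel by (and eventually by) the Riesz kernel $I_2$---is the same core observation the paper uses, but the mechanism for getting $g_\sigma=e^{\sigma|\cdot|}|f|$ into the target space is genuinely different, and yours has a gap. The paper writes $V=V_1+V_2$ with $V_1$ \emph{compactly supported} and bounded, $\|V_2\|_{L^{3/2,\infty}}$ small, so the exact integral equation resums to
$$f=\sum_{k\ge0}(-1)^k\bigl((-\Delta-E)^{-1}V_2\bigr)^k(-\Delta-E)^{-1}V_1f.$$
Because $V_1f$ is compactly supported, $e^{\sqrt{-E}|x|}V_1f$ is \emph{automatically} in $L^{3/2-\epsilon,q}\cap L^{1+\epsilon,q}$ (the weight is harmless on a compact set), so the whole series converges at once in the weighted space via the smallness of $V_2$---no Fredholm alternative, no $\sigma\uparrow\sqrt{-E}$ limit, no Fatou. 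In your version, the pointwise inequality has $g_\sigma$ on both sides, so the "incremental bootstrap in $\sigma$" cannot close by itself; you invoke "compact perturbation + Fredholm" for the $V_b$ part, but you never argue that $I-K_\mu(|V|\,\cdot)$ has trivial kernel on $L^{p_0}$, let alone uniformly as $\mu\downarrow 0$. Absent such an argument the bootstrap is circular, and the uniformity as $\mu\to0$ you flag at the end is exactly the unresolved issue---the compact support of $V_1$ is what the paper uses to dissolve it.

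The final $H^1$ paragraph is wrong on two counts. First, Hölder with $V\in L^{3/2}$ and $e^{(\sqrt{-E}-\epsilon)|x|}f\in L^{6}$ gives $e^{(\sqrt{-E}-\epsilon)|x|}Vf\in L^{6/5}$, not $L^2$ ($2/3+1/6=5/6$). Second, the bound $e^{\sqrt{-E}|x|}\nabla f\in L^{p_0}+L^{p_1}$ is a \emph{sum}, not an intersection; it does not imply membership in $L^2$. The paper instead proves $e^{\sqrt{-E}|x|}f\in L^{\infty}$ when $V\in L^{3/2}$ (take the strong-$L^{3/2}$ endpoint in the HLS estimate), uses the standing assumption $V\in L^2$ to conclude $e^{\sqrt{-E}|x|}Vf\in L^2$, then invokes Paley--Wiener: $\widehat{Vf}$ extends analytically to $|\Im\xi|<\sqrt{-E}$, the symbol $(|\xi|^2-E)^{-1}$ is analytic there, hence $\widehat f$, $\widehat{\nabla f}$, $\widehat{\nabla^2 f}$ all extend, yielding exponential $L^2$ decay. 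Your ``commuting the weight through $\nabla$'' route would need $\nabla f$ and $\nabla^2 f$ to already be in weighted $L^2$, which is precisely what is being proved; it does not follow from the $L^{p_0}+L^{p_1}$ conclusion of the first part.
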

\begin{proof} Let $V = V_1 + V_2$, where $V_1 \in L^1 \cap L^{\infty}$ has compact support and $\|V_2\|_{L^{3/2, \infty}}$ is small. Fix $\epsilon>0$, $q \in [1, \infty]$. Then $(-\Delta-E)^{-1}$, given by the $L^{3, \infty}$ convolution kernel
$$
(-\Delta-E)^{-1}(x, y)= \frac{e^{-\sqrt {-E}|x-y|}}{4\pi|x-y|},
$$
is a bounded operator in the spaces
$$\begin{aligned}
\B(e^{-\sqrt{-E}|x|} L^{3/2-\epsilon, q}, e^{-\sqrt{-E}|x|} L^{\frac 9 {4\epsilon}-\frac 3 2, q}) \cap \B(e^{-\sqrt{-E}|x|} L^{1+\epsilon, q}, e^{-\sqrt{-E}|x|} L^{\frac{3+3\epsilon}{1-2\epsilon}, q}).
\end{aligned}$$
Conversely, $V_2$ and $V \in L^{3/2, \infty}$ belong to
$$
\B(e^{-\sqrt{-E}|x|} L^{\frac 9 {4\epsilon}-\frac 3 2, q}, e^{-\sqrt{-E}|x|} L^{3/2-\epsilon, q})
\cap \B(e^{-\sqrt{-E}|x|} L^{\frac{3+3\epsilon}{1-2\epsilon}, q}, e^{-\sqrt{-E}|x|} L^{1+\epsilon, q}).
$$
Since $e^{\sqrt{-E}|x|} V_1 f \in L^{3/2-\epsilon, q} \cap L^{1+\epsilon, q}$, $f$ is the sum of the infinite series
$$
f = \sum_{k=0}^\infty (-1)^k \big((-\Delta-E)^{-1} V_2\big)^k (-\Delta-E)^{-1} V_1 f.
$$
This shows that $e^{\sqrt{-E}|x|} f \in L^{\frac 9 {4\epsilon}-\frac 3 2, q} \cap L^{\frac{3+3\epsilon}{1-2\epsilon}, q}$, so $e^{\sqrt{-E}|x|} V f \in L^{3/2-\epsilon, q} \cap L^{1+\epsilon, q}$. The other two conclusions follow by taking the derivative in $f = (-\Delta-E)^{-1} V f$.

If $V \in L^{3/2}$, then it follows in the same manner that $e^{\sqrt{-E}|x|} f \in L^{\infty}$. Under the further assumption that $V \in L^2$, we obtain that $e^{\sqrt{-E}|x|}Vf \in L^2$, so $\widehat{Vf}$ has an analytic continuation to $|\Im \xi| < \sqrt{-E}$. Since the same holds for $(|\xi|^2-E)^{-1}$, $\widehat f$, $\widehat{\dl f}$, and $\widehat{\dl^2 f}$ have $L^2$ also analytic extensions to this domain, so $\dl^2 f$ is exponentially decaying in $L^2$.
\end{proof}

Finally, based on these lemmas, we can prove Proposition \ref{prop23}.

\begin{proof}[Proof of Proposition \ref{prop23}] With no loss of generality, let $\beta(0)=v(0)=D(0)=0$. 
Assume for now that $\beta$ is small in $\dot H^{1/2} \cap C$ and that $e^{\beta} \dot v$ and $e^{-\beta} \dot D$ are small in the $\dot H^{-1/2} \cap \partial_t^{-1} C$ norm.

Setting $z_1 = e^{iv(t)x} Z$, $f_1 = e^{iv(t)x} F$, we obtain that
$$\begin{aligned}
\partial_t Z &= e^{-ivx} \partial_t z_1 - e^{-ivx} i(\dot v \cdot x) z_1,\\
\dl Z &= -e^{-ivx}ivz_1 + e^{-ivx} \dl z_1,\\
\Delta Z &= -e^{-ivx} |v|^2 z_1 - 2e^{-ivx} iv \dl z_1 + e^{-ivx} \Delta z_1.
\end{aligned}$$
The equation translates into
$$
i \partial_t z_1 + (\dot v \cdot x) z_1 + |v|^2 z_1 + 2iv \dl z_1 + (-\Delta + e^{-\beta(x\dl+2)} V(x-\gamma(t))) z_1 = f_1.
$$
Setting $z_2 = z_1(x+\gamma(t), t)$, $f_2 = f_1(x+\gamma(t), t)$, we get
$$
\partial_t z_1 = \partial_t z_2(x-\gamma(t), t) - \dot \gamma \dl z_2(x-\gamma(t), t),
$$
so
$$
i \partial_t z_2 - i \dot \gamma \dl z_2 + (\dot v \cdot x) z_2 + |v|^2 z_2 + 2iv \dl z_2 + (-\Delta+e^{-\beta(x\dl+2)}V) z_2 = f_2,
$$
which further simplifies to
$$
i \partial_t z_2 + (\dot v \cdot x) z_2 - i \dot D \dl z_2 - |v|^2 z_2 + (-\Delta + e^{-\beta(x\dl+2)}V) z_2 = f_2.
$$
Finally, if $z_3 = e^{\beta(t)(x\dl+3/2)} z_2$, $f_3 = e^{\beta(t)(x\dl+3/2)x} f_2$, we obtain that
$$\begin{aligned}
\partial_t z_2 &= e^{-\beta(t)(x\dl+3/2)} \partial_t z_3 - e^{-\beta(t)(x\dl+3/2)} \dot \beta (x\dl+3/2) z_3,\\
\Delta z_2 &= e^{-\beta(t)(x\dl+7/2)} \Delta z_3.
\end{aligned}$$
Then
$$
i \partial_t z_3 + e^{\beta}(\dot v \cdot x) z_3 - i e^{-\beta} \dot D \dl z_3 - i \dot \beta (x\dl+3/2) - |v|^2 z_3 + e^{-2\beta}(-\Delta +V) z_3 = f_3.
$$
Let the components of the solution and of the source term in this frame~be
\begin{align*}
&\tilde Z = P_c z_2,\ Z_p(t) = \sum_{k=1}^N Z_k(t),\ Z_k(t) = \zeta_k(t) g_k = P_k z_2,\\
&\tilde F(x, t) = P_c f_2,\ F_k(x, t) = \Phi_k(t) g_k = P_k f_2.
\end{align*}
Also denote $K = e^{\beta} \dot v \cdot x - i e^{-\beta} \dot D \dl - i\dot \beta (x\dl+3/2)$. The equation for $\tilde Z$ becomes
\begin{equation}\begin{aligned}\lb{eq_pc}
&i \partial_t \tilde Z + K \tilde Z - |v|^2 \tilde Z + e^{-2\beta} H \tilde Z = \tilde F + P_p K \tilde Z - P_c K Z_p,\\
&\tilde Z(0) = P_c Z(0),
\end{aligned}\end{equation}
while the equation for each $Z_k$, $1 \leq k \leq N$, can be written as
$$\begin{aligned}
&i \partial_t Z_k + P_k K Z_p - (|v|^2 - e^{-2\beta} E_k) Z_k = F_k - P_k K \tilde Z,\\
&Z_k(0) = P_k Z(0).
\end{aligned}$$
Let $p=(p_{k\ell})$ be the $N \times N$ matrix of elements $p_{k\ell} = \langle g_k, K g_{\ell} \rangle$, $E = (E_{jk})$ 
be the diagonal matrix with the energies on the diagonal; note that $p$ and $E$ are selfadjoint. Also define the operators
$$\begin{aligned}
T \zeta &= \sum_{k=1}^N \zeta_k P_c K g_k, & T^* Z &= \big(\langle K P_c Z, g_k \rangle \big)_{1 \leq k \leq N}; \\
\tau \zeta &= \sum_{k=1}^N \zeta_k g_k, & \tau^* Z &= \big(\langle Z, g_k \rangle \big)_{1 \leq k \leq N}.
\end{aligned}$$
Finally, let
$$\begin{aligned}
\zeta(t) &= \tau^* Z_2 = (\zeta_1(t), \ldots, \zeta_N(t))^T,\\
\Phi(t) &= \tau^* F_2 = (\Phi_1(t), \ldots, \Phi_N(t))^T
\end{aligned}$$
be the column vectors having $\zeta_k(t)$ and $\Phi_k(t)$ as entries.
Setting
$$
P(t) = \int_0^t p(s) \dd s = \big\langle g_k, \big(v(t)x-iD(t)\dl-i\beta(t)(x\dl+3/2)\big) g_{\ell}\big\rangle,
$$
we obtain the system
$$
i \partial_t \zeta + (\dot P - |v|^2 - e^{-2\beta}E) \zeta = \Phi + T^* \tilde Z.
$$

Let $A(t) = e^{P(t)}$, $\tilde \zeta = A(t) \zeta$, $E(t) = A(t) E A(t)^{-1}$, $\tilde \Phi(t) = A(t) \Phi(t)$; also let $B(t)$ be a family of unitary matrices that solves $\partial_t B(t) = e^{-i|v(t)|^2+ie^{-2\beta}E(t)} B(t)$. Then
$$
i \partial_t \tilde \zeta - (|v(t)|^2-e^{-2\beta}E(t)) \tilde \zeta = \tilde \Phi + A(t) T^* \tilde Z
$$
and the solution can be written in integral form as
\be\lb{modul}
\tilde \zeta(t) = \int_{-\infty}^t B(t) B(s)^{-1} (\delta_{s=0} \tilde \zeta(0) + i \tilde \Phi(s) + i A(s) T^* \tilde Z) \dd s.
\ee

Concerning the projection on the continuous spectrum, fix $\delta>0$ and write (\ref{eq_pc}) in the equivalent form
\begin{align*}
i \partial_t \tilde Z + K \tilde Z - |v|^2 \tilde Z + e^{-2\beta}(H P_c + i \delta P_p) \tilde Z = \tilde F + \tau T^* \tilde Z - T \zeta.
\end{align*}
Note that
$$
H P_c +i \delta P_p = H_0 + V - P_p (\mc H - i \delta) = H_0 + \tilde V_1 \tilde V_2.
$$
By Lemma \ref{lemma32}, we obtain
\be\lb{2.20}
i \partial_t \tilde Z + K \tilde Z - |v|^2 \tilde Z + e^{-2\beta} (H_0 + \tilde V_1 \tilde V_2) \tilde Z = \tilde F + \tau T^* \tilde Z - T \zeta.
\ee
As a model, consider the simplified equation
$$
i \partial_t f + K f - |v|^2 f + e^{-2\beta} H_0 f = F, f(0) \text{ given}.
$$
Letting $S(t) = e^{\beta(t)(x\dl+3/2)} e^{\gamma(t) \dl} e^{iv(t)x}$ and
$$\begin{aligned}
f(x, t) &= S(t)^{-1} g(x, t) := e^{-iv(t)x} g(x-\gamma(t), t), \\
F(x, t) &= S(t)^{-1} G(x, t) := e^{-iv(t)x} G(x-\gamma(t), t),
\end{aligned}$$
this becomes
$$
i \partial_t g + H_0 g = G, g(0) = U(0)^{-1} f(0),
$$
so
$$
g = e^{it H_0} g(0) - i \int_{-\infty}^t e^{i(t-s) H_0} G(s) \dd s
$$
and
$$
f = S(t)^{-1} e^{it H_0} S(0) f(0) - i \int_{-\infty}^t S(t)^{-1} e^{i(t-s) H_0} S(s) F(s) \dd s.
$$
Returning to (\ref{2.20}), we obtain that
\be\lb{2.193}
\tilde Z = \int_{-\infty}^t S(t)^{-1} e^{i(t-s) H_0} S(s) \big((\delta_{s=0} \tilde Z(0) - i\tilde F + \tau T^* \tilde Z - T \zeta) + i\tilde V_1 \tilde V_2 \tilde Z\big) \dd s.
\ee
Denote
$$\lb{3.144}\begin{aligned}
\tilde T_{\tilde V_2, \tilde V_1} F(t) &= \int_{-\infty}^t \tilde V_2 S(t)^{-1} e^{i(t-s) {H}_0} S(s) \tilde V_1 F(s) \dd s, 
\end{aligned}$$
respectively
$$\begin{aligned}
\tilde T_{\tilde V_2, I} F(t) &= \int_{-\infty}^t \tilde V_2 S(t)^{-1} e^{i(t-s) {H}_0} S(s) F(s) \dd s.
\end{aligned}$$
Then, rewrite Duhamel's formula (\ref{2.193}) as
\be\lb{3.186}\begin{aligned}
(I - i \tilde T_{\tilde V_2, \tilde V_1}) \tilde V_2 \tilde Z(t) &= \tilde T_{\tilde V_2, I} (\delta_{t=0} \tilde Z(0) - i\tilde F + \tau T^* \tilde Z - T \zeta).
\end{aligned}\ee
We compare $\tilde T_{\tilde V_2, \tilde V_1}$ with the kernel
$$\begin{aligned}
T_{\tilde V_2, \tilde V_1} F(t) &= \int_{-\infty}^t \tilde V_2 e^{i(t-s) H_0} \tilde V_1 F(s) \dd s. 
\end{aligned}$$
The comparison takes place in the following algebra $\tilde K$:
\begin{definition}\lb{deg_kt}
$
\tilde K = \{T(t, s) \mid \sup_s \|T(t, s) f\|_{M_t L^2_x} \leq C \|f\|_2\}.
$
\end{definition}
Indeed, $I - i T_{\tilde V_2, \tilde V_1}$ is invertible in $\tilde K$, with the inverse explicitly given by
$$
\big((I - i T_{\tilde V_2, \tilde V_1})^{-1} F\big)(t) = I + i \int_{-\infty}^t \tilde V_2 e^{i(t-s) (H P_c + i \delta P_p)} \tilde V_1 F(s) \dd s
$$
and bounded due to Strichartz estimates, following \cite{bec} and earlier results. Then, we can also invert in (\ref{3.186}), if $\|\gamma\|_\CC$ is sufficiently small; indeed, $\CC \subset L^{\infty}$, so by Lemma \ref{lemma2.1}
$$
\lim_{\|D\|_{\CC}+\|v\|_{\CC} \to 0} \|\tilde T_{\tilde V_2, \tilde V_1} - T_{\tilde V_2, \tilde V_1}\|_{\B(L^2_{t, x}, L^2_{t, x})} = 0.
$$
This implies that we can invert $I - i \tilde T_{\tilde V_2, \tilde V_1}$ and obtain
$$
\tilde V_2 \tilde Z(t) = (I - i \tilde T_{\tilde V_2, \tilde V_1})^{-1} \tilde T_{\tilde V_2, I} (\delta_{t=0} \tilde Z(0) - i\tilde F + \tau T^* \tilde Z - T \zeta).
$$
Using the further notation
$$\begin{aligned}
\tilde T_{I, \tilde V_1} F(t) &= \int_{-\infty}^t S(t)^{-1} e^{i(t-s) {H}_0} S(s) \tilde V_1 F(s) \dd s,
\end{aligned}$$
and setting $\tilde T_{I, I}$ to simply be the Schr\"{o}dinger evolution operator
$$\begin{aligned}
\tilde T_{I, I} F(t) &= \int_{-\infty}^t S(t)^{-1} e^{i(t-s) {H}_0} S(s) F(s) \dd s,
\end{aligned}$$
we obtain from (\ref{3.186}) that
$$
\tilde Z = \big(\tilde T_{I, I} + \tilde T_{I, \tilde V_1} (I - i \tilde T_{\tilde V_2, \tilde V_1})^{-1} \tilde T_{\tilde V_2, I}\big) (\delta_{t=0} \tilde Z(0) - i\tilde F + \tau T^* \tilde Z - T S(t)^{-1} \tilde \zeta(t)).
$$
We consider this equation as part of a system, together with the modulation equation (\ref{modul}). Using auxiliary variables $\zeta_1$, $\zeta_2$, $Z_1$, and $Z_2$, we rewrite both equations as follows, in order to apply a fixed point argument:
\be\begin{aligned}\lb{2.19}
Z_1 =& \big(\tilde T_{I, I} + \tilde T_{I, \tilde V_1} (I - i \tilde T_{\tilde V_2, \tilde V_1})^{-1} \tilde T_{\tilde V_2, I}\big) \\
&\big(\delta_{t=0} \tilde Z(0) - i\tilde F + \tau T^* Z_2 - T A(t)^{-1} \zeta_2(t) \big) \\
\zeta_1(t) =& \int_{-\infty}^t B(t) B(s)^{-1} \big(\delta_{s=0} \tilde \zeta(0) + i \tilde \Phi(s) + A(s) T^* Z_2(s)\big) \dd s.
\end{aligned}\ee
For initial data, we take $Z_1(0) = Z_2(0) = P_c Z(0)$ and $\zeta_1(0) = \zeta_2(0) = \tilde \zeta(0)$.

Assume that $Z_2$ is in the Strichartz space $L^2_t L^{6, 2}_x$, $B(t)^{-1} A(t) T^* Z_2 \in \CC'$, and $B(t)^{-1} \zeta_2(t) \in\CC$; we prove the same for $Z_1$ and $\zeta_1$.

To begin with,
by (\ref{2.19}) and Lemma \ref{lemma24},
$$
\|B(t)^{-1} \zeta_1(t)\|_\CC \les \|Z(0)\|_2 + \|B(t)^{-1} \tilde \Phi(t)\|_{\CC'} + \|B(t)^{-1} A(t) T^* Z_2\|_{\CC'}.
$$
Note that $B(t)$, $B(t)^{-1} \in W^{1, \infty}$ and $A(t)$, $A(t)^{-1} \in \dot H^{1/2} \cap C$. Considering each matrix component separately, for $\gamma \in \dot H^{1/2} \cap C$, (\ref{eqn2.11}) implies that
$$
\|B(t)^{-1} A(t) T^* \tilde T_{I, I} F\|_\CC \les \|F\|_{L^2_t L^{6/5, 2}_x},\ \|B(t)^{-1} A(t) T^* \tilde T_{I, V_1} F\|_\CC \les \|F\|_{L^2_{t, x}}.
$$
The dual of (\ref{eqn2.11}) results in
$$\begin{aligned}
&\|\tilde T_{I, I} T A(t)^{-1} \zeta_2\|_{L^2_t L^{6, 2}_x}
 + \|\tilde T_{\tilde V_2, I} T A(t)^{-1} \zeta_2\|_{L^2_{t, x}}
  \les \\
&\les (\|\beta\|_{\dot H^{1/2} \cap C} + \|e^{\beta} \dot v\|_{\dot H^{-1/2} \cap \partial_t C}
   + \|e^{\beta} \dot D\|_{\dot H^{-1/2} \cap \partial_t C}) \|B(t)^{-1} \zeta_2\|_\CC.
\end{aligned}$$
Finally, from (\ref{eq2.45}) we infer that
$$\begin{aligned}
&\|B(t)^{-1} A(t) T^* \tilde T_{I, I} T A(t)^{-1} \zeta_2\|_\CC \les\\
&\les (\|\beta\|_{\dot H^{1/2} \cap C} + \|e^{\beta} \dot v\|_{\dot H^{-1/2} \cap \partial_t C} + \|e^{\beta} \dot D\|_{\dot H^{-1/2} \cap \partial_t C}) \|B(t)^{-1} \zeta_2\|_\CC.
\end{aligned}$$
In this estimate we also use the fact that the bound states of $-\Delta+V$ have two derivatives exponentially decaying in $L^2$. We reach the same conclusion if $\beta$, $v$, $D \in \Gamma$ and bound states have one additional derivative.

These term-by-term bounds result in the overall estimate
$$\begin{aligned}
&\|Z_1\|_{L^2_t L^{6, 2}_x} + \|B(t)^{-1} A(t) T^* Z_1\|_{\CC'} + \|B(t)^{-1} \zeta_1(t)\|_\CC \les \\
&\les (\|\beta\|_{\dot H^{1/2} \cap C} + \|e^{\beta} \dot v\|_{\dot H^{-1/2} \cap \partial_t C} + \|e^{\beta} \dot D\|_{\dot H^{-1/2} \cap \partial_t C}) \\
&(\|Z_2\|_{L^2_t L^{6, 2}_x} + \|B(t)^{-1} A(t) T^* Z_2\|_\CC + \|B(t)^{-1} \zeta_2(t)\|_\CC) + \\
&+\|Z(0)\|_2 + \|P_c F\|_{L^2_t L^{6/5, 2}_x + L^1_t L^2_x} + \|B(t)^{-1} \tilde \Phi\|_{\CC'}.
\end{aligned}$$
Taking the difference between two solutions to (\ref{2.19}) corresponding to different values of $(Z_2, \zeta_2)$, that is $(Z_2^j, \zeta_2^j) \mapsto (Z_1^j, \zeta_1^j)$, $j=1, 2$, the terms related to $Z_0$ and $F$ cancel:
$$\begin{aligned}
&\|Z_1^2-Z_1^1\|_{L^2_t L^{6, 2}_x} + \|B(t)^{-1} A(t) T^* (Z_1^2-Z_1^1)\|_\CC + \|B(t)^{-1} (\zeta_1^2(t)-\zeta_1^1(t))\|_\CC \les \\
&\les (\|\beta\|_{\dot H^{1/2} \cap C} + \|e^{\beta} \dot v\|_{\dot H^{-1/2} \cap \partial_t C} +\|e^{\beta} \dot D\|_{\dot H^{-1/2} \cap \partial_t C}) \\
&\big(\|Z_2^2-Z_2^1\|_{L^2_t L^{6, 2}_x} + \|B(t)^{-1} A(t) T^* (Z_2^2-Z_2^1)\|_\CC + \|B(t)^{-1} (\zeta_2^2(t) - \zeta_2^1(t))\|_\CC\big).
\end{aligned}$$
Therefore, the mapping $(Z_2, \zeta_2) \mapsto (Z_1, \zeta_1)$ is a contraction and its fixed point $(\tilde Z, \zeta)$ is a solution to the original system, satisfying the bound
$$\begin{aligned}
&\|\tilde Z\|_{L^2_t L^{6, 2}_x} + \|B(t)^{-1} A(t) T^* \tilde Z\|_\CC + \|B(t)^{-1} \zeta(t)\|_\CC \les \\
&\les \|Z(0)\|_2 + \|P_c F\|_{L^2_t L^{6/5, 2}_x + L^1_t L^2_x} + \|B(t)^{-1} \tilde \Phi\|_{\CC'}.
\end{aligned}$$
Letting $Z = \tilde Z + \tau^* \zeta$, we obtain a solution to the equation that satisfies the stated Strichartz estimates and is in $\dot H^{1/2}$ in time, if localized in space.

Finally, assume that $v$, $D$, and $\beta$ have finite, but not small, $\dot H^{1/2} \cap C$ norms, and are uniformly continuous; then we divide the interval $[0, \infty)$ into finitely many pieces,
$$
[0, \infty) = [0, t_1] \cup [t_1, t_2] \cup \ldots \cup [t_{N-1}, \infty),
$$
on each of which $\|e^{\beta}\|_{\Gamma} \|v(t)\|_{\dot H^{1/2}[t_1, t_2]} + \sup_{t \in [t_1, t_2]} \|e^{\beta}\|_{\Gamma} |v(t) - v(t_1)|$ is small and same goes for $D$ and $\beta$.

By a symmetry transformation we can set $\beta(t_1) = v(t_1) = D(t_1) = 0$. We use the same argument to conclude that the Strichartz estimates hold on each interval. Iterating, we obtain a bound on the whole real line, with a constant that grows exponentially with $N$.
\end{proof}

We proceed with the proof of Corollary \ref{ionization}.
\begin{proof}[Proof of Corollary \ref{ionization}] Just to simplify the computations, we take the scaling $\beta$ to be zero in the sequel (there is no qualitative change from doing~this).

Following (\ref{modul}), the mass transfer from $P_c L^2$ to $P_p L^2$ is expressed by the formula
$$
U_{cp}(t) P_c Z(0) = \int_{-\infty}^t \tau^* B(t) B(s)^{-1} A(s) T^* P_c Z(s) \dd s.
$$
Here $Z$ is a solution to the equation (\ref{1.1}) having $P_c Z(0)$ as the initial data. The mass transfer from $P_p L^2$ to $P_c L^2$ is given by
$$\begin{aligned}
U_{pc}(t) P_p Z(0) = &\big(\tilde T_{I, I} + \tilde T_{I, \tilde V_1}(I-i\tilde T_{\tilde V_2, \tilde V_1})^{-1} \tilde T_{\tilde V_2, I}\big) \\
&\big(\tau T^* P_c Z - T B(t)^{-1} \tau P_p Z).
\end{aligned}$$
When $\gamma$ is small in norm,
$$
\|B(t)^{-1} S(t) \tau U_{cp}(t) P_c Z(0)\|_{\CC} \les \|\gamma\|_{\CC} \|Z\|_{L^2_t L^{6, 2}_x} \les \|\gamma\|_{\CC} \|P_c Z(0)\|_2
$$
and
$$
\|Z\|_{L^{\infty}_t L^2_x} \les \|\gamma\|_{\CC} \|P_p Z(0)\|_2.
$$
By the Strichartz estimates of Proposition \ref{prop23}, it follows that the strong limit in the definition of the wave operator,
$$
W_+ Z(0) = \slim_{t \to \infty} e^{-it\Delta} Z(t),
$$
is $L^2$-bounded. Likewise, concerning the bound states, the limit
$$\begin{aligned}
\lim_{t \to \infty} B(t)^{-1} S(t) \tau Z(x-\gamma(t), t) = \\
= \int_{\R} B(s)^{-1} \big(\delta_{s=0} \zeta(0) + i\tilde\Phi(s) + A(s) T^* P_c Z \big)\dd s
\end{aligned}$$
exists due to Proposition \ref{prop23}.
\end{proof}

Next, we prove Theorem \ref{energy}.
\begin{proof}[Proof of Theorem \ref{energy}]
To simplify computations, we take $\beta=0$ and $v=0$ in the sequel. This does not change the proof in any significant way.

We split the energy into two terms, $E[Z(x+\gamma(t), t)] = E[P_c Z(x+\gamma(t), t)] + E[P_p Z(x+\gamma(t), t)]$. The bound state energy is bounded,~as
$$
E[P_p Z(x+\gamma(t), t)] = \langle E \tau^* Z, \tau^* Z \rangle
$$
and $\|\tau^* Z\|_{L^{\infty}_t} \les \|Z(0)\|_2$.

Next, we consider the dispersive part of the solution. Starting from
$$
i \partial_t P_c(t) Z(t) + H(t) P_c(t) Z(t) = P_c(t) F(t) + i (\partial_t P_c(t)) Z(t)
$$
and taking the gradient one has that
$$
i \partial_t \dl P_c(t) Z(t) + H(t) \dl P_c(t) Z(t) = \dl P_c(t) F(t) + i (\partial_t P_c(t)) Z(t) + (\dl V(t)) Z(t).
$$
Hence, if $\dl V \in L^{3/2, \infty}$, by Strichartz estimates for both $Z$ and $\dl Z$ --- and using inequalities of the form of Lemma \ref{lema35} to bound $i (\partial_t P_c(t)) Z(t)$ --- we get
$$
\|P_c(t) Z(x, t)\|_{L^{\infty}_t H^1_x \cap L^2_t \langle \dl \rangle^{-1} L^{6, 2}_x} \les \|Z(0)\|_{H^1}.
$$
This bounds the kinetic energy of the dispersive component $P_c Z$. Furthermore, we obtain
$$
\|P_c Z(x+\gamma(t)\|_{(\dot H^{1/2} \cap C)_t L^{6, 2}_x} \les \|Z(0)\|_{H^1},
$$
hence
$$
\big\||P_c Z(x+\gamma(t))|^2\big\|_{(\dot H^{1/2} \cap C)_t L^{3, 1}_x} \les \|Z(0)\|_{H^1}^2.
$$
This leads to a bound on potential energy:
$$
\|\langle V(x-\gamma(t)) P_c Z, P_c Z \rangle\|_{\dot H^{1/2} \cap C} \les \|Z(0)\|_{H^1}^2 \|V\|_{|\dl|^{-1} L^{3/2, \infty}} (1+\|\gamma\|_{\CC}).
$$
Note that the potential energy of the dispersive component, $E_p[P_c Z(x+\gamma(t)]$, goes to zero in the average, due to Strichartz estimates:
$$
\|\langle V(x-\gamma(t)) P_c Z, P_c Z \rangle\|_{L^1_t} \les \|V\|_{L^{3/2, \infty}} \|Z(0)\|_2^2.
$$
\end{proof}

\section*{Acknowledgments}
The authors would like to thank Herbert Koch for mentioning the work of Lyons and Pierre Germain for mentioning the Strauss exponent for this equation.

A.S.\ is partially supported by the NSF grant DMS--0903651.


\begin{thebibliography}{AbcDef1}\addcontentsline{toc}{TOCspecial}{References}
\bibitem[Agm]{agmon} S.\ Agmon, \emph{Spectral properties of Schr\"{o}dinger operators and scattering theory}, Ann.\ Scuola Norm.\ Sup.\ Pisa Cl.\ Sci.\ (4)  2 (1975), No.\ 2, pp.\ 151--218.



\bibitem[Bec]{bec} M.\ Beceanu, \emph{New estimates for a time-dependent Schr\"{o}dinger equation}, accepted by Duke Mathematical Journal.

\bibitem[BeSo]{BecS} M.\ Beceanu, A.\ Soffer, \emph{The Schr\"{o}dinger equation with Brownian motion potentials}, in preparation.



\bibitem[BeL\"o]{bergh} J.\ Bergh, J.\ L\"ofstr\"om, \emph{Interpolation Spaces. An Introduction}, Springer-Verlag, 1976.

\bibitem[Bou1]{bou1} J.\ Bourgain, \emph{On long-time behaviour of solutions of linear Schr\"{o}dinger equations with smooth time-dependent potential}, Geometric aspects of functional analysis, pp.\ 99--113, Lecture Notes in Math., 1807, Springer, Berlin, 2003.

\bibitem[Bou2]{bou2} J.\ Bourgain, \emph{Growth of Sobolev norms in linear Schr\"{o}dinger equations with quasi-periodic potential}, Comm.\ Math.\ Phys.\ 204, no.\ 1, pp.\ 207--240, 1999.

\bibitem[Bou3]{bou3} J.\ Bourgain, \emph{On growth of Sobolev norms in linear Schr\"{o}dinger equations with smooth time-dependent potential}, J.\ Anal Math.\ 77, pp.\ 315--348 (1999).

\bibitem[Bou4]{bou4} J.\ Bourgain, \emph{Fourier transform restriction phenomena for certain lattice subsets and applications to nonlinear evolution equations. I.\ Schr\"{o}dinger equations}, Geom.\ Funct.\ Anal 3, no.\ 2, pp. 107--156 (1993).







\bibitem[Che1]{cherem} S.\ E.\ Cheremshantsev, \emph{Asymptotic completeness and the absence of bound states in the quantum problem of scattering on a Brownian particle}, Mathematical Notes, Vol.\ 46, No.\ 4,  1989.

\bibitem[Che2]{cherem2} S.\ E.\ Cheremshantsev, \emph{Theory of scattering by a Brownian particle}, Collection of articles, Trudy Mat.\ Inst.\ Steklov., 184, Nauka, Leningrad, 1990, pp.\ 5 --104.

\bibitem[CLT]{clt} O.\ Costin, J.\ L.\ Lebowitz, S.\ Tanveer, \emph{Ionization of Coulomb systems in $\set R^3$ by time periodic forcing of arbitrary size}, Comm.\ Math.\ Phys., 296 (3), pp.\ 681--738 (2010).











\bibitem[GJY]{gjy} A.\ Galtbayar, A.\ Jensen, K.\ Yajima, \emph{Local time-decay of solutions to Schr\"{o}dinger equations with time-periodic potentials}, J.\ Stat.\ Phys. 1--4, pp.\ 283--310 (2004).




\bibitem[Gol]{gol} M. Goldberg, \emph{Strichartz estimates for the Schr\"{o}dinger equation with time-periodic $L^{n/2}$ potentials}, J.\ Funct.\ Anal., to appear.

\bibitem[GoSc]{golsch} M.\ Goldberg, W.\ Schlag, \emph{A limiting absorption principle for the three-dimensional Schr\"odinger equation with $L^p$ potentials}, Intl.\ Math.\ Res.\ Not.\ 2004:75 (2004), pp.\ 4049--4071.




\bibitem[How]{how} J.\ Howland, \emph{Stationary scattering theory for time-dependent Hamiltonians}, Mathematische Annalen, Volume 207, Number 4 / December, 1974, pp.\ 315--335.


\bibitem[IoJe]{ionjer} A.\ D.\ Ionescu, D.\ Jerison, \emph{On the absence of positive eigenvalues of Schr\"odinger operators with rough potentials}, Geometric and Functional Analysis 13, pp.\ 1029--1081 (2003).

\bibitem[KeTa]{tao} M.\ Keel, T.\ Tao, \emph{Endpoint Strichartz estimates}, Amer.\ Math.\ J.\ 120 (1998), pp.\ 955--980.




\bibitem[KiYa]{kiya} H.\ Kitada, K. Yajima, \emph{A scattering theory for time-dependent long-range potentials}, Duke Math.\ J.\ Volume 49, Number 2 (1982), pp.\ 341--376.






\bibitem[Lyo]{lyons} T.\ Lyons, \emph{Systems controlled by rough paths}, European Congress of Mathematics, pp.\ 269--281, Eur.\ Math.\ Soc., Z\"{u}rich, 2005.



\bibitem[MMT]{mmt} J.\ Marzuola,\ J.\ Metcalfe,\ D.\ Tataru,\ \emph{Strichartz estimates and local smoothing estimates for asymptotically flat Schr�dinger equations}, Journal of Functional Analysis, Volume 255, Issue 6, 15 September 2008, pp.\ 1497--1553.






\bibitem[Pil]{pillet} C.\ A.\ Pillet, \emph{Asymptotic completeness for a quantum particle in a Markovian short range potential},   Communications in Mathematical Physics, Vol.\ 105, No.\ 2, 1986, pp. 259--280.





\bibitem[RoSc]{rodsch} I.\ Rodnianski, W.\ Schlag, \emph{Time decay for solutions of Schr\"odinger equations with rough and time-dependent potentials}, Invent.\ Math.\ 155 (2004), no.\ 3, pp.\ 451--513.

\bibitem[RSS]{rod2} I.\ Rodnianski, W.\ Schlag, A. Soffer, \emph{Dispersive analysis of charge transfer models}, Communications on Pure and Applied Mathematics, Volume 58, Issue 2 (2005), pp.\ 149--216.



\bibitem[Sch2]{schlag2} W.\ Schlag, \emph{Spectral theory and nonlinear partial differential equations: a survey}, Discrete Contin.\ Dyn.\ Syst.\ 15 (2006), No.\ 3, pp.\ 703--723.




\bibitem[Ste]{stein} E.\ Stein, \emph{Harmonic Analysis}, Princeton University Press, Princeton, 1994.





\bibitem[Tao]{taobook} T.\ Tao, \emph{Nonlinear Dispersive Equations: Local and Global Analysis}.

\bibitem[Tay]{taylor} M.\ E.\ Taylor, \emph{Tools for PDE.\ Pseudodifferential operators, paradifferential operators, and layer potentials}, Mathematical Surveys and Monographs, 81, American Mathematical Society, Providence, RI, 2000.

\bibitem[Wan]{wang} W.-M.\ Wang, \emph{Logarithmic Bounds on Sobolev Norms for Time Dependent Linear Schr\"{o}dinger Equations}, Communications in Partial Differential Equations, 33, pp.\ 2164--2179, 2008.




\bibitem[Yaj]{yajima} K.\ Yajima, \emph{The $W^{k, p}$-continuity of wave operators for Schr\"{o}dinger operators}, J.\ Math.\ Soc.\ Japan 47 (1995), pp.\ 551--581.


\end{thebibliography}
\end{document}